\newtheorem{lemma}{Lemma}
\newtheorem{theorem}{Theorem}
\newcommand\NEW[1]{#1}
\newcommand{\remove}[1]{}
\newcommand{\N}{\widetilde{N}}
\newcommand{\ep}{\epsilon}
\newcommand{\ka}{\hat{k}}
\newcommand{\ha}{\hat{h}}
\newcommand{\V}{\widehat{V}}
\newcommand{\ca}{\hat{ca}}
\newcommand{\C}{\widetilde{C}}
\begin{document}

\title{Revisiting SIR in the age of COVID-19: Explicit Solutions and
  Control Problems}

\date{\today}

\author{Vivek S. Borkar and D. Manjunath\footnote{The authors are also
    affiliated with the Centre for Machine Intelligence and Data
    Science at IIT Bombay. Work supported in part by a grant
    `\textit{Controlling epidemics}' from the Science and Engineering
    Research Board, Government of India. VSB was also supported by the
    J.\ C.\ Bose and S.\ S.\ Bhatnagar Fellowships from the Government
    of India.}  \\
  Department of Electrical Engineering IIT  Bombay,\\
  Mumbai INDIA 400076. \\
  \texttt{borkar,dmanju@ee.iitb.ac.in}
  }

\maketitle

\begin{abstract}
  The non-population conserving SIR (SIR-NC) model to describe the
  spread of infections in a community is proposed and studied. Unlike
  the standard SIR model, SIR-NC does not assume population
  conservation.  Although similar in form to the standard SIR, SIR-NC
  admits a closed form solution while allowing us to model mortality,
  and also provides different, and arguably a more realistic,
  interpretation of the model parameters. Numerical comparisons of
  this SIR-NC model with the standard, population conserving, SIR
  model are provided. Extensions to include imported infections,
  interacting communities, and models that include births and deaths
  are presented and analyzed. Several numerical examples are also
  presented to illustrate these models.

  Two control problems for the SIR-NC epidemic model are
  presented. First we consider the continuous time model predictive
  control in which the cost function variables correspond to the
  levels of lockdown, the level of testing and quarantine, and the
  number of infections. We also include a switching cost for moving
  between lockdown levels. A discrete time version that is more
  amenable to computation is then presented along with numerical
  illustrations. We then consider a multi-objective and
  multi-community control where we can define multiple cost functions
  on the different communities and obtain the minimum cost control to
  keep the value function corresponding to these control objectives
  below a prescribed threshold.
\end{abstract}

\textbf{Keywords:} SIR epidemic model, Non population conserving SIR,
control of epidemics, Model predictive control of epidemics,
Multi-objective multi-community control of epidemics.

  

\section{Introduction}
\label{sec:intro}

\subsection{Background}
\label{sec:background}
Epidemic models have been used in a variety of situations---modeling
of the spread of an infection in a community, spread of rumors in a
community, spread of viruses in systems of computer networks, etc. An
important workhorse in this domain has been the SIR model that was
introduced by Kermack and McKendrick \cite{Kermack27} and has received
wide textbook treatment, e.g., \cite{Daley99}. This classical model
assumes a community where everyone interacts with everyone in a
homogeneous manner and the average behavior of these interactions are
known. In contrast, in some new applications, the community
interactions are represented by a graph and the interactions happen
along the edges of the graph. See for example \cite{Ganesh05} for
models of SIR epidemics on graphs. \NEW{There are also stochastic
  models that explictly account for randomness, see, e.g.,
  \cite{Ball83}, \cite{Ball93}.}

The standard SIR model for a single community is a compartmental model
in which the population is divided into sub-populations of susceptible
(S), infected (I), and recovered (R) individuals. At time $t,$ let
$S(t)$ be the size of the population that is susceptible to be
infected by the epidemic, $I(t)$ be the number in the population that
is currently infected and carrying the infection, and $R(t)$ be the
number that have recovered from the infection and are neither going to
infect others nor will be infected again. Let $\mu$ be the contact
rate (contacts per unit time for each person) in the population and $p$ be the
fraction of meetings between an infected and susceptible person in
which the susceptible person gets infected. Let $\gamma$ be the rate
at which an infected person recovers from the infection. Let $N$ be
size of the population. Letting $\lambda= \mu p,$ the o.d.e.s
governing the evolution of the system are:
\begin{eqnarray}
  \dot{S}(t) & = & - \lambda \frac{I(t) S(t)}{N}, \nonumber\\
  \dot{I}(t) & = & \lambda \frac{I(t) S(t)}{N} - \gamma I(t),
  \nonumber\\
  \dot{R}(t) & = & \gamma I(t), \nonumber\\
  N & = & S(t) + I(t) + R(t) \equiv \ \mbox{a constant}.
  \label{eq:SIR}
\end{eqnarray} 
Parameter $\lambda$ is also called the pairwise infection
rate. Observe that in this model, deaths, hospitalizations, and
isolations are not modeled. In fact, it is assumed that those that
are infected will continue to spread the infection during the entire
time that they are carrying the infection, i.e., till they move into
the recovered state. 

Since the introduction of the SIR model, a myriad variations that
include many more compartments and more detailed interactions have
been proposed and studied. A key bottleneck in the study of these
models is the nonlinearity that defies easy solutions. Till recently,
explicit solutions of the basic system defined by \eqref{eq:SIR} were
not available. For example, \cite{Daley99} only provides a rather
complex approximation for $R(t).$ Solutions for some special cases are
available, see, e.g., \cite{Shabbir10}. More recently, a parametric
solution for the general case has been described in
\cite{Harko14}. Even here, converting the parametric form to a time
domain solution would require numerically solving an integral
equation. Furthermore, small changes to the model that would make the
model more realistic makes the search for a solution even more
hopeless, as was exasparatingly noted in \cite{Nucci04}\footnote{While
  explaining that removing the population conservation constraint
  would make solutions for the even simpler SIS model impossible, the
  authors remark ``It would seem that a fatal disease which this
  models is also not good for mathematics''. }.

In the absence of explicit solutions, the focus has been on numerical
evaluation and also on some key quantities of interest. For example,
in the SIR model of \eqref{eq:SIR}, it is easy to see that $\lambda
S(0) > \gamma$ will lead to a breakout of the epidemic. It is also
known that at the end of the epidemic, the number that will not be
infected will be strictly greater than 0, i.e., $S(t) \to c > 0.$
Several other key parameters are also of interest, especially if the
infections result in the need for hospitalizations and other
healthcare resources. The popular phrase in the context of the
COVID-19 pandemic has been ``flattening the curve'', i.e., to delay
and/or reduce the peak $I(t).$ Using the parametric solution of
\cite{Harko14}, we can get a closed form expression for the maximum
value of $I(t)$ (see \eqref{eq:SIR-Imax} in a later section) in the
SIR system of \eqref{eq:SIR}.  However, the time at which this peak is
achieved has to be obtained through a numerical solution to an
integral equation.

Furthermore, in the absence of explicit solutions of the system of
equations or of the key performance indicators, control problems are
even harder. Thus, not surprisingly, application of interventions by
suitably modulating either of $\lambda$ or $\gamma$ to achieve
specific control objectives is not well studied.\footnote{We remark
  here that this kind of control is different from vaccination based
  control whose effect is to reduce the susceptible population. This
  has received significant attention in the research literature.} In
this paper we first make an important and, what we believe to be, a
more realistic change to the basic SIR model in which we drop the
population conservation assumption of \eqref{eq:SIR} and propose the
SIR-NC (for `Non-Conservative') epidemic model. This is motivated by
first recognizing that those that recover neither infect the others
nor can they be infected by virtue of having developed immunity.  (The
SIR model confers immunity on those that have recovered. As of the
writing of this paper, reinfections in COVID-19 are rare and the SIR
is a very good model. The SIS model and its variants may be used for
cases where re-infections are more common.) Thus we can treat $R(t)$
as the part of the population that is `dropping out' or removed from
the system. The outflow from the infected population could also
include those that have died due to the infection, those that are
cured and are free from the infection, and also those that are
identified to be infected and hospitalized and/or quarantined. The
parameter $\gamma$ would then be the sum rate of all these. If one
wants to focus on, say, the recovered, denoted by $R(t)$, one would
then have $\dot{R}(t) = \beta I(t)$ where $\beta < \gamma$ is the
recovery rate. Similar formalism will be posited for the dead, the
quarantined, etc. Thus our SIR-NC model allows us to model
non-pharmacological interventions (NPIs) such as early isolation and
quarantining of those that are infected through testing. Importantly,
this is achieved without the addition of any new compartments to the
model.

The rest of the paper is organized as follows. In the next subsection
we introduce the SIR-NC model and present its solution in the
following section. In Section~\ref{sec:SIR-NC-model} we develop the
model and present Theorem~\ref{thm:SIR-NC} that provides an explicit
expression for $(I(t),S(t)),$ the peak value of $I(t),$ and the time
at which the peak is realized. The case of time-varying $\lambda$ and
$\gamma$ are also included. Theorem~\ref{thm:SIR-NC} is proved in
Section~\ref{sec:Thm1-proof}. Several variations of the basic model
are discussed in Section~\ref{sec:variations}---analysis of a
community with imported infections (Section~\ref{sec:SIR-NC-Import}),
interacting communities (Section~\ref{sec:SIR-NC-communities}), and
the SIR-NC model that allows for natural births and deaths
(Section~\ref{sec:birth-deaths}). Numerical examples will be used to
illustrate these models.  In Section~\ref{sec:SIR-NC-Control} we
discuss several control problems over a finite horizon for the SIR-NC
model after discussing the control objectives, specifically, minimum
cost control problems that control $I(t)$ so as to achieve specific
objectives. Sections~\ref{sec:cont-control} and
\ref{sec:discrete-control} develop the finite horizon continuous time
and discrete time control problems. In
Section~\ref{sec:multi-objective}, we develop a more general
multi-objective, multi-community control problem.  Illustrative
numerical results are also presented.

\subsection{The non-population conserving SIR (SIR-NC) model}
\label{sec:SIR-NC-intro}
We begin by recalling the philosophy behind the classical SIR
model. This model assumes that the population is comprised of three
sub-populations of susceptible, infected, and recovered people.  Let
$N(t) = S(t) + I(t) + R(t)$ denote the total population at time $t.$
There are no births or deaths, based on the assumption that these are
on a much slower time scale and can therefore be ignored. When a
susceptible person meets another person, nothing happens unless the
latter is infected; the fraction of such meetings with an infected
individual is $\frac{I(t)}{N(t)}.$ The infection happens with a rate
of $\lambda > 0$, whence the rate of transfer from susceptible to
infected is $\lambda \frac{S(t)I(t)}{N(t)}.$ The infected get
recovered at rate $\gamma > 0.$ The combined dynamics thus is
\begin{eqnarray*}
  \dot{S}(t) &=& -\lambda\frac{S(t)I(t)}{N(t)}, \\
  \dot{I}(t) &=& \lambda\frac{S(t)I(t)}{N(t)} - \gamma I(t), \\
  \dot{R}(t) &=& \gamma I(t).
\end{eqnarray*}
Summing the three yields $\dot{N}(t) = 0$, whence $N(t) \equiv$ a
constant $N$ (say). The above then simplifies to (\ref{eq:SIR}).
While this classical model has been an extensively studied and
successful model in epidemiology (\cite{Hethcote00} has an excellent
review, \cite{Daley99,Keeling08} are examples of excellent textbook
treatments) some of its limitations stand out:
\begin{enumerate}
\item While birth rate can be viewed as negligible, the death rate
  need not be so for lethal diseases such as Ebola or Covid-19.

\item Once you allow a nontrivial death rate, you do not have a
  constant population any more and the ensuing simplification
  disappears. Attempts to account for deaths while keeping the total
  population constant fly in the face of the microscopic picture we
  started with---it requires meeting the dead on equal footing with
  the living. 

\item Even in its pristine version, the dynamics is not easy to
  analyze and is usually studied only numerically or, as we have said
  previously, using messy approximations and hard to use parametric
  forms. 
\end{enumerate}

\NEW{Motivated by this, we present here an alternative model that goes
  back to \cite{Gleissner88}} and offers two clear advantages:
\begin{enumerate}
\item It is analytically tractable.
\item It is easy to incorporate nontrivial death rate into this model.
\end{enumerate}
Our model does not match the microscopic model which led to SIR as the
corresponding macroscopic behavior.  Nevertheless, we can interpret it
by introducing different time scales as we shall soon see in Lemma
\ref{twoscale} below. The key observation we start with is what is
already implicit in the SIR model above, viz., that the recovered
neither infect nor get infected (i.e., they become immune). They
affect the evolution of $S(t)$ and $I(t)$ only through the
probabilities of pairwise meetings between susceptible and infected,
because $R(t)$ enters the normalizing factor $N(t) \equiv N$. In our
model we count only those pairwise meetings that matter, i.e., the
meetings wherein a susceptible agent meets another susceptible or
infected person.  This is legitimate because the constraint that the
total population is conserved is now dropped, whence the recovered can
be legitimately viewed as having `dropped out'.  We add the additional
proviso that a susceptible person meets a large number of susceptible
or infected persons---not necessarily the whole population, but a
large enough statistically representative portion thereof. Then the
probability of a susceptible person meeting an infected person is
$\frac{I(t)}{\N(t)}$ where $\N(t) := S(t) + I(t)$. By a reasoning
analogous to the above, the equations now become (with an additional
tweak we mention later)
\begin{eqnarray*}
  \dot{S}(t) &=& -\lambda\frac{S(t)I(t)}{\N(t)}, \\
  \dot{I}(t) &=& \lambda\frac{S(t)I(t)}{\N(t)} - \gamma I(t), \\
  \dot{R}(t) &=& \beta I(t).
\end{eqnarray*}
The additional change we have made is to replace $\gamma$ by $0 <
\beta < \gamma$ in the third equation. That is, we allow for deaths
due to the disease (more generally, including quarantine etc.) at rate
$\gamma - \beta$. Clearly, $\N(t)$ cannot be expected to be a
constant, because the right hand side does not add to zero. This is
not a complication, but a blessing in disguise, because now $R(t)$
does not affect the evolution of $S(t)$ and $I(t)$ at all, in
conformity with our view that the recovered `fall out of the
system'. Furthermore, the interpretation of $\lambda$ in this model is
different. This is described below.
  
Even before we discuss this any further, we wish to underscore the
fact that this is \textit{not} a case of sub-sampling random pairwise
meetings that happen only one at a time on the microscopic scale in
the standard SIR model. That would lead to a different dynamics. In
other words, suppose that the individual meetings happen on a common
Poisson clock with a fixed rate and we choose to observe only those
between the susceptible and the infected or the susceptible. Any
reasonable continuum limit of this should also re-scale $\gamma$ as
$\gamma / \N(t)$ because of the time scaling implicit in the above
sampling. Our model is distinct from this scenario in that it requires
each person to meet a statistically significant portion of the entire
population `at one go'. This is, for example, what would happen at the
workplace, in the school, during the commute, etc. This is also the
view in the agent based simulation model implemented in
\cite{IISc_TIFR}. What this stands for is that the `atomic' or
individual pairwise meetings take place on a faster time scale than
the death rate due to the epidemic, which is a reasonable
assumption. A simple scenario would be the discrete dynamics
\begin{eqnarray*}
  S_{n+1} &=& S_n - \ep S_n\zeta_n, \\
  I_{n+1} &=& I_n + \ep I_n(\zeta_n - \gamma),
\end{eqnarray*}
where $\ep$ is the discrete time step, $\zeta_n$ is an indicator
random variable corresponding to a pairwise meeting which is $0$ if it
is benign (i.e., susceptible meeting susceptible) and $1$ if not
(i.e., susceptible meeting infected). The conditional probability that
$\zeta_n = 1$ given the past is proportional to the fraction of
infected population, say $\lambda\frac{I_n}{S_n + I_n}$. Thinking of
$\ep$ as a small time step, $\gamma$ remains removal (or recovery)
rate per unit time as $\ep\downarrow 0$, but the random meetings
$\{\zeta_n\}$ start crowding on the scale
$\Theta\left(\frac{1}{\ep}\right)$ per unit time. Then the averaging
effect kicks in and we get the desired dynamics in the limit. To make
this precise, define $S^\ep(t), I^\ep(t), t \geq 0,$ by
\begin{eqnarray*}
  S^\ep(n\ep) = S_n, \ I^\ep(n\ep) = I_n,
\end{eqnarray*}
with linear interpolation on $[n\ep, (n+1)\ep]$ for $n \geq 0$. Then
we have:

\begin{lemma}
  \label{twoscale}
  For any $T > 0$, $E\left[\left\|\sup_{t\in [0,T]}\|S^\ep(t) -
    S(t)\right\|^2\right] = O(\ep)$.
\end{lemma}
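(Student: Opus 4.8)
The plan is to read the recursion for $(S_n,I_n)$ as a constant--step--size stochastic approximation whose averaged vector field is exactly the SIR--NC o.d.e., and to run the standard o.d.e.--method estimate. Let $\mathcal{F}_n := \sigma(\zeta_0,\dots,\zeta_{n-1})$, set $p_n := \lambda I_n/(S_n+I_n) = E[\zeta_n\mid\mathcal{F}_n]$ (well defined since, for $\ep<1/\gamma$, an easy induction gives $I_n>0$ for all $n$ when $I_0>0$), and $M_{n+1}:=\zeta_n-p_n$, a bounded martingale--difference sequence with $|M_{n+1}|\le 1$. Then the recursion reads $S_{n+1}=S_n+\ep h_1(S_n,I_n)-\ep S_nM_{n+1}$ and $I_{n+1}=I_n+\ep h_2(S_n,I_n)+\ep I_nM_{n+1}$, where $h=(h_1,h_2)$ is the right--hand side of the SIR--NC system. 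Two preliminary observations are used throughout: (a) the map $(S,I)\mapsto SI/(S+I)$ has partial derivatives in $[0,1]$ on the open quadrant, so $h$ extends to a globally Lipschitz map on $\{S,I\ge 0\}$, with constant $L=L(\lambda,\gamma)$; (b) for $\ep$ small, a one--line induction gives $0\le S_n\le S_0$ and $0\le I_n\le I_0e^{T}$ for all $n\le T/\ep$, and the same bounds hold for $(S(t),I(t))$ by comparison, so both the interpolated process and the o.d.e. solution stay in a fixed compact set $K$ on $[0,T]$.

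Next I would telescope the $S$--recursion and subtract the integral form $S(n\ep)=S(0)+\int_0^{n\ep}h_1(S(s),I(s))\,ds$ (and likewise for $I$) to obtain, with $e_n:=\|(S_n,I_n)-(S(n\ep),I(n\ep))\|$,
\[
e_n\ \le\ \Big\|\ep\!\sum_{k<n}h(S(k\ep),I(k\ep))-\!\int_0^{n\ep}\!h(S(s),I(s))\,ds\Big\|\ +\ \ep L\sum_{k<n}e_k\ +\ N_n,\qquad N_n:=\Big\|\sum_{k<n}\ep\big(-S_kM_{k+1},\,I_kM_{k+1}\big)\Big\|.
\]
The Riemann--sum term is $O(\ep)$ because $s\mapsto h(S(s),I(s))$ is Lipschitz on $[0,T]$ (composition of the Lipschitz map $h$ with the curve $s\mapsto(S(s),I(s))$, which is Lipschitz since $h$ is bounded on $K$). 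The discrete Gronwall inequality then gives $\sup_{n\le T/\ep}e_n\le C\big(\ep+\sup_{n\le T/\ep}N_n\big)$ with $C=C(L,T)$, and passing from the grid to the interpolated pair $(S^\ep,I^\ep)$ adds only another deterministic $O(\ep)$, since consecutive increments of both the process and the o.d.e. solution are $O(\ep)$.

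Finally, $\big(\sum_{k<n}\ep S_kM_{k+1}\big)_n$ is an $L^2$--martingale with predictable quadratic variation $\sum_{k<n}\ep^2S_k^2E[M_{k+1}^2\mid\mathcal{F}_k]\le S_0^2\,n\,\ep^2$, so Doob's maximal inequality yields $E\big[\sup_{n\le T/\ep}N_n^2\big]\le C'(T/\ep)\ep^2=C'T\ep$ (the $I_kM_{k+1}$ coordinate is handled identically, using $I_k\le I_0e^T$). Squaring the bound of the previous paragraph and taking expectations gives $E\big[\sup_{t\in[0,T]}\|S^\ep(t)-S(t)\|^2\big]\le C''(\ep^2+\ep)=O(\ep)$, which is the claim (and the same estimate holds for $I^\ep$). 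The one point that needs genuine care is that Gronwall cannot be applied directly to a supremum: the noise term $N_n$ must be carried through the Gronwall step as a pathwise quantity and only estimated in $L^2$ at the very end via Doob — this is precisely where the rate $O(\ep)$ for the \emph{squared} error (rather than merely $O(\sqrt\ep)$) comes from. Everything else — the a priori bounds of observation (b) and the Lipschitz/Riemann estimates — is routine.
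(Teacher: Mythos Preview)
Your argument is correct and is exactly the standard constant--stepsize stochastic--approximation estimate: Lipschitz drift plus discrete Gronwall for the deterministic part, Doob's $L^2$ maximal inequality for the martingale noise, yielding the $O(\ep)$ mean--square rate. The paper does not give a self--contained proof at all but simply invokes this as a special case of Lemma~1, p.~103, of Borkar's monograph \cite{Borkar08}; you have essentially reproduced the proof of that cited lemma in this setting, so the approaches coincide.
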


This is a special case of Lemma 1, p.\ 103, of \cite{Borkar08}.  In
fact, this is the standard averaging effect of two time scale
dynamics. Note that the interpretation of parameter $\lambda$ gets
modified from that for the original SIR model.

Getting back to our model, the most significant observation is what we
have already made, viz., that $S(t), I(t)$ obey a self-contained two
dimensional differential equation which, as it turns out, is
explicitly solvable as we shall see later.


\section{Analysis of the SIR-NC model}
\label{sec:SIR-NC-analysis}

\subsection{The model and its implications}
\label{sec:SIR-NC-model}

In this section we analyze the SIR-NC model in detail.  As we
elaborated in the previous section, the $R(t)$ only have a one-way
coupling with $I(t),$ the infected population.  We thus have that

\begin{equation}
  R(t) = \beta \int_0^t I(s)ds
  \label{eq:SIR-NC-R-of-t}
\end{equation}
which can be explicitly computed once $I(\cdot)$ is known.  Therefore
the SIR-NC system that we need to analyze is 
\begin{eqnarray}
  \dot{S}(t) &=& -\lambda\frac{I(t)S(t)}{S(t) + I(t)}, \nonumber \\
  \dot{I}(t) &=& \lambda\frac{I(t)S(t)}{S(t) + I(t)} - \gamma I(t).
  \label{eq:SIR-NC}
\end{eqnarray}
Of course, we should have $I(0) > 0$ and define $S(0)+I(0) = N(0).$ We
could also have $R(0) > 0$, but since they do not affect the evolution,
$R(0)=0$ is without loss of generality. This change to
non-conservation allows us to provide explicit expressions for $S(t),$
$I(t),$ $I_{\max} :=$ the peak value of $I(t),$ and $T_{\max} :=$ the
time at which it is attained (Theorem~\ref{thm:SIR-NC}). This in turn
helps us define several control problems with the many different
control objectives to shape the evolution of the epidemic.

It is easy to see that the condition for an epidemic to break out in
SIR-NC is
\begin{displaymath}
  \frac{\lambda}{\gamma} > 1+ \frac{I(0)}{S(0)}.
\end{displaymath}

The main result in this section is the following theorem. \NEW{(See
  also \cite{Gleissner88}.)}.

\begin{theorem}
  \label{thm:SIR-NC}
  Writing $C=\frac{S(0)}{I(0)},$ the solution to the SIR-NC system is
  as follows.
  \begin{align}
    I(t) & \ = \ I(0)e^{\int_0^t\left(\frac{\lambda Ce^{-(\lambda -
          \gamma)s}}{1 + Ce^{-(\lambda - \gamma)s}} - \gamma\right)ds}
    \nonumber \\
    & \ = \ I(0) \left(\frac{1+C e^{-(\lambda - \gamma)t}}{1+C}
    \right)^{-\left(\frac{\lambda}{\lambda - \gamma}\right)}
    e^{-\gamma t} \nonumber \\
    S(t) & \ = \ S(0)e^{-\int_0^t\left(\frac{\lambda }{1 +
        Ce^{-(\lambda - \gamma)s}}\right)ds} \nonumber \\    
    & \ = \ S(0) \left( \frac{e^{(\lambda - \gamma)t} + C}{1 +
      C}\right)^{-\left(\frac{\lambda}{\lambda - \gamma}\right)}
    \nonumber \\    
      T_{\max} & = \frac{\left(\log\left[C\left(\frac{(\lambda -
            \gamma)}{\gamma}\right)\right]\right)^+}{ \lambda -
        \gamma}.
    \label{eq:SIR-NC-results1}
  \end{align}
  
For the more general case of time-dependent $\lambda(t), \gamma(t), t
\geq 0$, we similarly have
  \begin{align}
    I(t) & \ = \ I(0)e^{\int_0^t\left(\frac{\lambda(s)
        Ce^{-\int_0^s(\lambda(r) - \gamma(r))dr}}{1 +
        Ce^{-\int_0^s(\lambda(r) - \gamma(r))dr}} -
      \gamma(s)\right)ds} \nonumber \\    
   S(t) & \ = \ S(0)e^{-\int_0^t\left(\frac{\lambda(s) }{1 +
       Ce^{\int_0^s(\lambda(r) - \gamma(r))dr}}\right)ds} \nonumber \\  
   T_{\max} & = \ \mbox{the solution of the equation} \nonumber\\
   \int_0^{T_{\max}}(\lambda(t) - \gamma(t))dt & \ =
   \ \left(\log\left[C\left(\frac{(\lambda(T_{\max}) -
       \gamma(T_{\max}))}{\gamma(T_{\max})}\right)\right]\right)^+.
    \label{eq:SIR-NC-results2}
  \end{align}
\end{theorem}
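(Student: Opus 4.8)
The plan is to exploit the fact that, although \eqref{eq:SIR-NC} is nonlinear, the ratio $S(t)/I(t)$ obeys a trivial linear equation, which decouples the problem. First I would check that $S(t),I(t)>0$ for all $t\ge 0$: the second equation gives $\dot I\ge-\gamma I$, so $I(t)\ge I(0)e^{-\gamma t}>0$, and $\dot S\ge-\lambda S$ keeps $S(t)>0$; since the right-hand side of \eqref{eq:SIR-NC} is locally Lipschitz on the open positive quadrant, this also yields global existence and uniqueness and makes $\log S(t),\log I(t)$ well-defined. Then $\frac{d}{dt}\log S=-\lambda\frac{I}{S+I}$ and $\frac{d}{dt}\log I=\lambda\frac{S}{S+I}-\gamma$, so subtracting, $\frac{d}{dt}\log\frac{S}{I}=-\lambda\frac{S+I}{S+I}+\gamma=-(\lambda-\gamma)$. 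Hence $S(t)/I(t)=Ce^{-(\lambda-\gamma)t}$ with $C=S(0)/I(0)$. This single identity does essentially all of the work.

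Next I would substitute $S/I=Ce^{-(\lambda-\gamma)t}$ into the two log-derivatives, using $\frac{S}{S+I}=\frac{S/I}{1+S/I}$ and $\frac{I}{S+I}=\frac{1}{1+S/I}$, to get $\frac{d}{dt}\log I=\frac{\lambda Ce^{-(\lambda-\gamma)t}}{1+Ce^{-(\lambda-\gamma)t}}-\gamma$ and $\frac{d}{dt}\log S=-\frac{\lambda}{1+Ce^{-(\lambda-\gamma)t}}$; integrating each from $0$ to $t$ gives the first (integral) forms in \eqref{eq:SIR-NC-results1}. The closed forms then follow from two elementary antiderivatives: the substitution $w=1+Ce^{-(\lambda-\gamma)s}$ turns $\int\frac{\lambda Ce^{-(\lambda-\gamma)s}}{1+Ce^{-(\lambda-\gamma)s}}\,ds$ into $-\frac{\lambda}{\lambda-\gamma}\log w$, and multiplying numerator and denominator by $e^{(\lambda-\gamma)s}$ turns $\int\frac{\lambda}{1+Ce^{-(\lambda-\gamma)s}}\,ds$ into $\frac{\lambda}{\lambda-\gamma}\log\!\big(e^{(\lambda-\gamma)s}+C\big)$; evaluating at the endpoints and exponentiating produces the stated powers of $\frac{1+Ce^{-(\lambda-\gamma)t}}{1+C}$ and $\frac{e^{(\lambda-\gamma)t}+C}{1+C}$. (If wanted, $R(t)=\beta\int_0^t I(s)\,ds$ is immediate from \eqref{eq:SIR-NC-R-of-t}.)

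For $T_{\max}$ I would use that $I$ peaks where $\dot I=0$, i.e.\ where $\lambda\frac{S}{S+I}=\gamma$, equivalently $S/I=\frac{\gamma}{\lambda-\gamma}$. Since $t\mapsto Ce^{-(\lambda-\gamma)t}$ is strictly monotone, $\frac{d}{dt}\log I$ changes sign at most once, and from $+$ to $-$, so this stationary point is the unique global maximum; solving $Ce^{-(\lambda-\gamma)T_{\max}}=\frac{\gamma}{\lambda-\gamma}$ gives $T_{\max}=\frac{1}{\lambda-\gamma}\log\!\big(C\frac{\lambda-\gamma}{\gamma}\big)$. The $(\cdot)^+$ captures exactly the no-outbreak regime: $\dot I(0)>0$ iff $\frac{\lambda}{\gamma}>1+\frac{I(0)}{S(0)}$, i.e.\ $C\frac{\lambda-\gamma}{\gamma}>1$; when this fails (in particular when $\lambda\le\gamma$), $I$ is nonincreasing from the start and attains its maximum at $t=0$.

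Finally, the time-dependent case is identical in structure: $\frac{d}{dt}\log\frac{S}{I}=-(\lambda(t)-\gamma(t))$ gives $S(t)/I(t)=C\exp\!\big(-\int_0^t(\lambda(r)-\gamma(r))\,dr\big)$, and substituting this into the log-derivatives of $I$ and $S$ and integrating yields \eqref{eq:SIR-NC-results2}, while the first-order condition $\lambda(T_{\max})\frac{S}{S+I}\big|_{T_{\max}}=\gamma(T_{\max})$ rearranges (take logarithms) into the displayed implicit equation for $T_{\max}$. The one place that needs care is here: the time-varying integrals no longer collapse to closed form — they stay as written — and the $T_{\max}$ equation is now implicit, so strictly speaking one should add a remark that a unique relevant root is guaranteed only under mild monotonicity of $\lambda(\cdot)-\gamma(\cdot)$ (and of $\lambda(\cdot)/\gamma(\cdot)$), which makes the left side increasing and pins down the first sign change of $\dot I$. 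Everything else is elementary calculus once the ratio identity $S/I=Ce^{-(\lambda-\gamma)t}$ is in hand.
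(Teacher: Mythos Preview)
Your argument is correct and lands on exactly the same key identity as the paper, namely $S(t)/I(t)=Ce^{-(\lambda-\gamma)t}$, after which the two proofs proceed identically (substitute into $\dot I/I$ and $\dot S/S$, integrate, and set $\dot I=0$ for $T_{\max}$). The only real difference is how that identity is obtained: the paper introduces the fraction $x(t)=S/(S+I)$, shows it satisfies the logistic equation $\dot x=(\gamma-\lambda)x(1-x)$, solves that by separation of variables, and then reads off $x/(1-x)=S/I=Ce^{(\gamma-\lambda)t}$; you instead subtract the two log-derivatives directly to get $\frac{d}{dt}\log(S/I)=-(\lambda-\gamma)$ in one line. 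Your route is shorter and avoids the detour through the logistic ODE, while the paper's $x,y$ variables have the minor advantage of living in $[0,1]$ and making the isocline picture (``$I$ increases until $x=\gamma/\lambda$'') slightly more geometric. Your caveat about non-uniqueness of $T_{\max}$ in the time-varying case matches the paper's own Remark following the theorem.
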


\noindent
\textbf{Remark} The last equation is arrived at simply by putting the
time derivative of $I(\cdot)$ equal to zero. This need not have a
unique solution, nor need every solution correspond to a maximum, even
a local maximum. 

We now compare the computations from this model with that of the
standard population conserving SIR model. First, $R(t)$ for the SIR-NC
is obtained by using $I(t)$ from \eqref{eq:SIR-NC-results1} in
\eqref{eq:SIR-NC-R-of-t}. Compare this with with the following
approximation from \cite{Daley99}. Here $\rho=\lambda/\gamma.$
%
\begin{align}
  \alpha & = \sqrt{ \frac{2 S(0)}{\rho^2} \left( N - S(0) \right) +
    \left( \frac{S(0)}{\rho} - 1 \right)^2 } \nonumber \\  
  \phi & = \tanh^{-1}\left( \frac{1}{\alpha} \left( \frac{S(0)}{\rho}
  - 1 \right) \right)  \nonumber \\  
  R(t) & \approx \ \frac{\rho^2}{S(0)} \left( \frac{S(0)}{\rho} -1
  \right) \ + \ \frac{\alpha \rho^2}{S(0)} \tanh\left(0.5\gamma \alpha
  t - \phi \right) \nonumber 
\end{align}
Next, compare the computation of $T_{\max}$ in SIR-NC as above with
that in the SIR model using the the parametric form of $S(t)$ and
$I(t)$ from \cite{Harko14}.  The following is adapted from
\cite{Toda20}.
\begin{align}
  S(t) & = S(0) u \nonumber  \\
  I(t) & = N \frac{\gamma}{\lambda} \log u + S(0)(1- u) + I(0)
  \nonumber \\
  t & = N \int_{u}^1 \frac{d\xi}{\xi(\lambda S(0) (1-\xi) + S(0) \lambda
    + N \gamma \log \xi)}
  \label{eq:SIR-Toda}
\end{align}
As can be seen, $t$ does not have a closed form solution.

Finally, we consider the computation of $I_{\max}.$ In the SIR model
we can see that the maximum of $I(t)$ occurs for $u = S(0)
\frac{\lambda}{\gamma}$ in (\ref{eq:SIR-Toda}) and the $I_{\max}$ in
the SIR system can be evaluated as
\begin{equation}
  I(0) + S(0) + N \frac{\gamma}{\lambda}\left( \log \left(\frac{N
    \gamma}{\lambda S(0)} \right) -1 \right) 
  \approx N (1 - \rho + \rho \log \rho) 
  \label{eq:SIR-Imax}
\end{equation}
where $\rho = \gamma/\lambda.$ In the approximation above, we have
assumed $I(0)$ is very small compared to $S(0)$ and hence $S(0)
\approx N.$
The corresponding expression for the SIR-NC system is
\begin{align}
  I_{\max}=& I(0) \left( \frac{\lambda}{(1+C)(\lambda-\gamma)}
  \right)^{-\frac{\lambda}{\lambda-\gamma}} \left(C
  \frac{\lambda-\gamma}{\gamma} \right)^{-\frac{\gamma}{\lambda-\gamma}}
  \label{eq:SIR-NC-Imax} \\
  = \ & S(0)^{-\frac{\gamma}{\lambda-\gamma}} \left( S(0) + I(0)
  \right)^{\frac{\lambda}{\lambda-\gamma}} (\lambda - \gamma) \left(
  \lambda^{-\lambda} \gamma^{\gamma} \right)^{\frac{1}{\lambda-\gamma}}
  \nonumber \\
  \approx & N(0) (\lambda - \gamma) \left( \lambda^{-\lambda}
  \gamma^{\gamma} \right)^{\frac{1}{\lambda-\gamma}} \nonumber \\
  = \ & N(0) (1-\rho)\rho^{\frac{\rho}{1-\rho}} 
    \label{eq:SIR-NC-Imax-approx}
\end{align}
where, as before, $\rho = \gamma/\lambda$ and the approximation
assumes that $I(0)$ is very small compared to $S(0).$

\begin{figure}[tbp]
  \begin{center}
  \begin{subfigure}{0.45\textwidth}
    \centering
    \includegraphics[width=\textwidth]{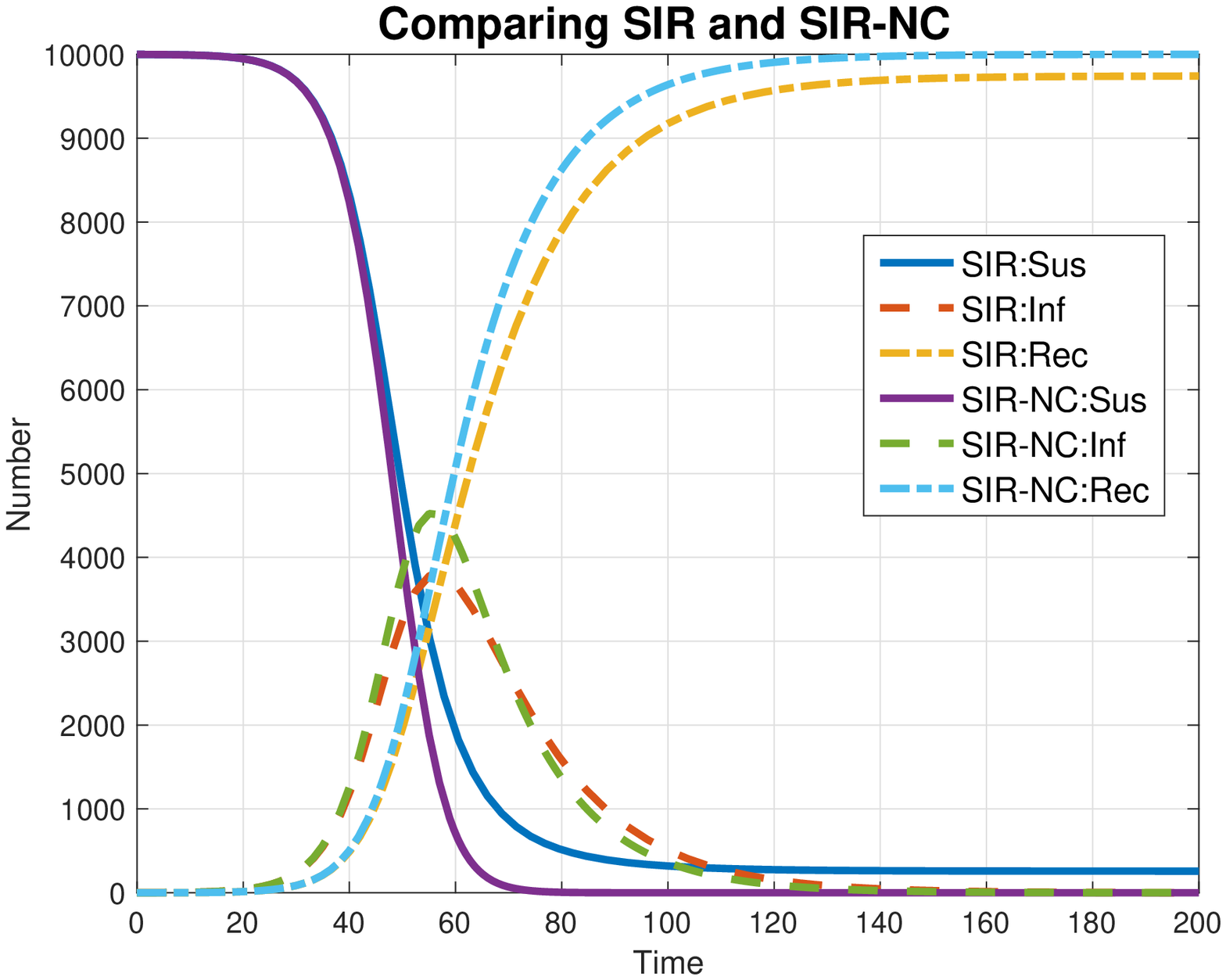}
    \caption{$N=10,000$}
    \label{plot:sir-sirnc-10k}
  \end{subfigure}
  \begin{subfigure}{0.45\textwidth}
    \centering
    \includegraphics[width=\textwidth]{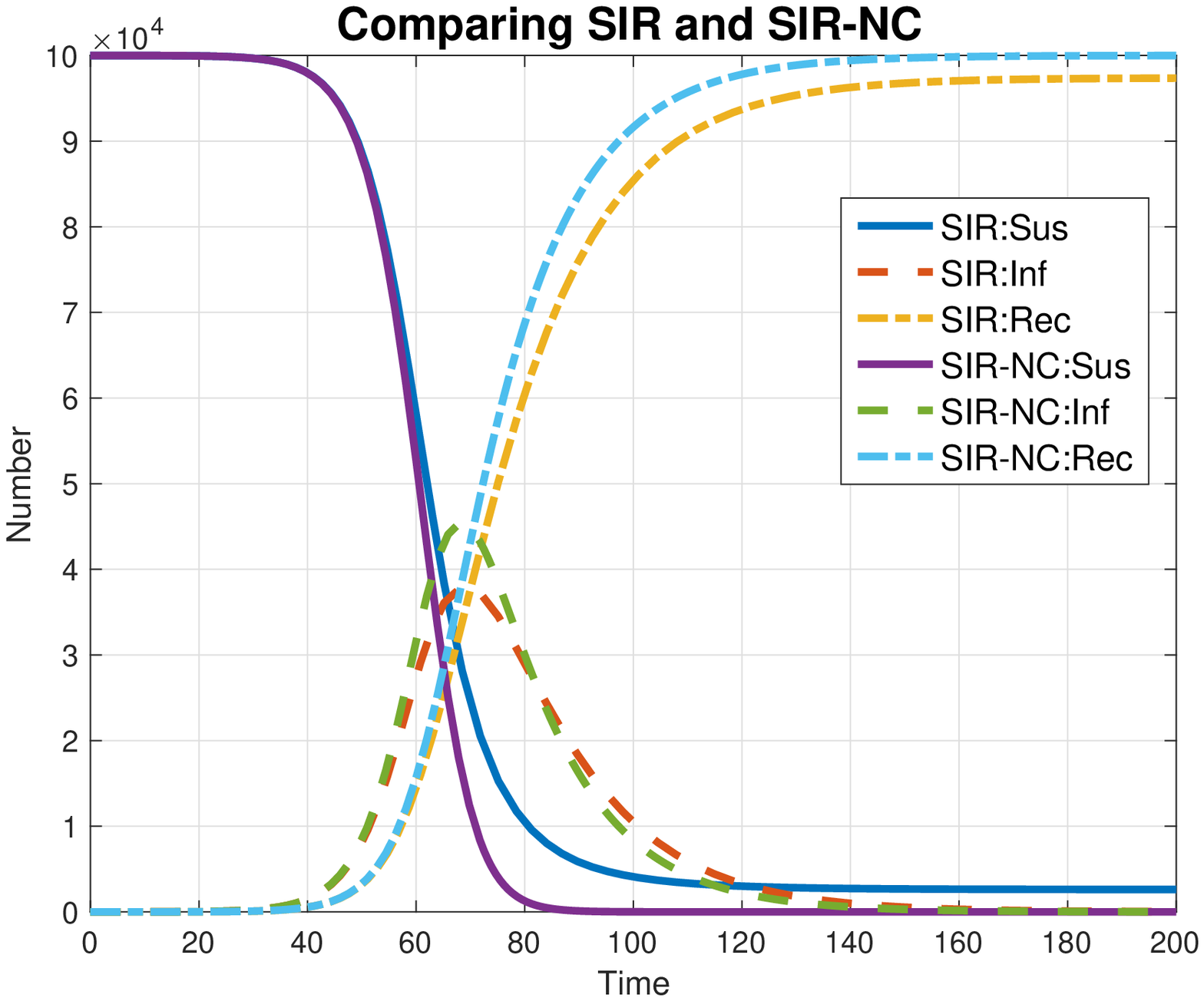}
    \caption{$N=100,000$}
    \label{plot:sir-sirnc-100k}
  \end{subfigure}
  \end{center}
  \caption{Trajectory of $I(t)$ and $S(t)$ for the SIR and SIR-NC
    systems are shown. For both systems, the other parameters are
    $\lambda=1/4,$ $\gamma=1/15,$ $I(0)=1,$ and $R(0)=0.$}
  \label{plot:SIR-SIRNC-compare} 
\end{figure}

\begin{table}[tbp]
  \begin{center}
    \begin{tabular}{|l|c|c|c|c|c|c|}
      \hline
      $N \downarrow$ $(\lambda,\gamma)\rightarrow$ 
      & \multicolumn{2}{|c|}{(0.1,0.05)}
      & \multicolumn{2}{|c|}{(0.2,0.05)}
      & \multicolumn{2}{|c|}{(0.2,0.1)}\\
      \cline{2-7} 
      & SIR & SIR-NC & SIR & SIR-NC & SIR & SIR-NC \\
      \hline
      1,000 & 135 &  138 & 54 & 53 & 68 & 69 \\
      10,000 & 181 & 184 & 70 & 69 & 91 & 92 \\
      100,000 &228 & 230 & 85 & 84 & 114 & 115 \\
      \hline
    \end{tabular}
  \end{center}
  \caption{Table shows $T_{\max}$ different combinations of $N,$
    $\lambda,$ and $\gamma$ in the SIR and SIR-NC systems. In all of
    these we assume $I(0)=1$ and $R(0)=0.$ }
  \label{tbl:Tmax-Imax}
\end{table}

\begin{figure}[tbp]
  \begin{center}
    \includegraphics[width=0.7\textwidth]{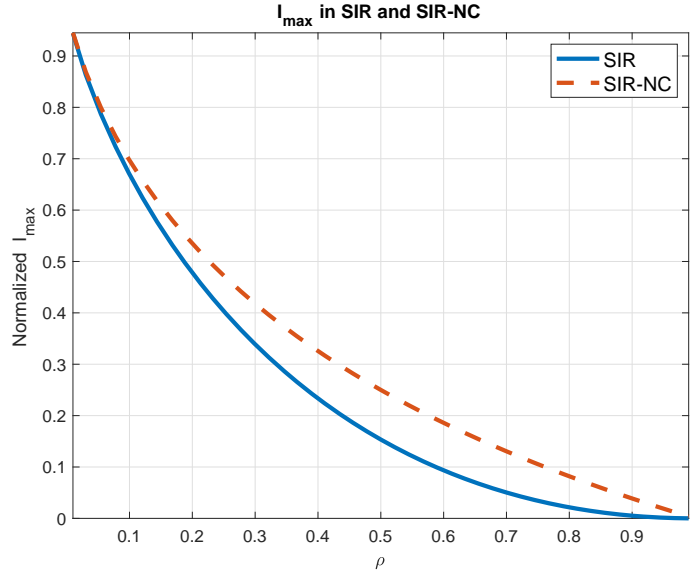}
  \end{center}
  \caption{The normalized peak infections, $I_{\max}/N,$ as a function
    of $\rho=\frac{\gamma}{\lambda}$ for the SIR and SIR-NC models for
    the case when $I(0)=1$ and $S(0)=N-1.$ $I_{\max}$ is proportional
    to $N$ and is a function of $\rho.$}
  \label{plot:sirnc-Imax-compare} 
\end{figure}  
We now discuss how the key differences between the SIR and the SIR-NC
models affect the trajectory of $I(t)$ for the same parameter
values. Recall that in the SIR model of \eqref{eq:SIR}, there is no
removal and the entire population of $N$ interact with each other. In
SIR-NC for the same $\lambda,$ the $N(t)$ is a non-increasing function
of $t$ and the growth of $I(t)$ is faster in the beginning.  This can
be seen in Fig.~\ref{plot:SIR-SIRNC-compare} that compares the
trajectory of $I(t)$ and $S(t)$ for the two models. It is also
instructive to compare $I_{\max}$ and $T_{\max}$ for the two models;
Fig.~\ref{plot:sirnc-Imax-compare} compares the $\frac{I_{\max}}{N}$
for the SIR and the SIR-NC models as a function of
$\rho=\gamma/\lambda.$ As expected, $I_{\max}$ in SIR-NC is higher
than that in SIR. Table~\ref{tbl:Tmax-Imax} lists $T_{\max}$ for the
SIR and SIR-NC systems for different values of $N,$ $\lambda,$ and
$\gamma.$ Interestingly, we see that $T_{\max}$ in both the systems
are very nearly the same. We performed these computations for several
more parameters and this observation appears to be more generally
true.

In the rest of the paper, we will only consider the SIR-NC model.

\begin{figure}[tbp]
  \begin{center}
    \includegraphics[width=0.7\textwidth]{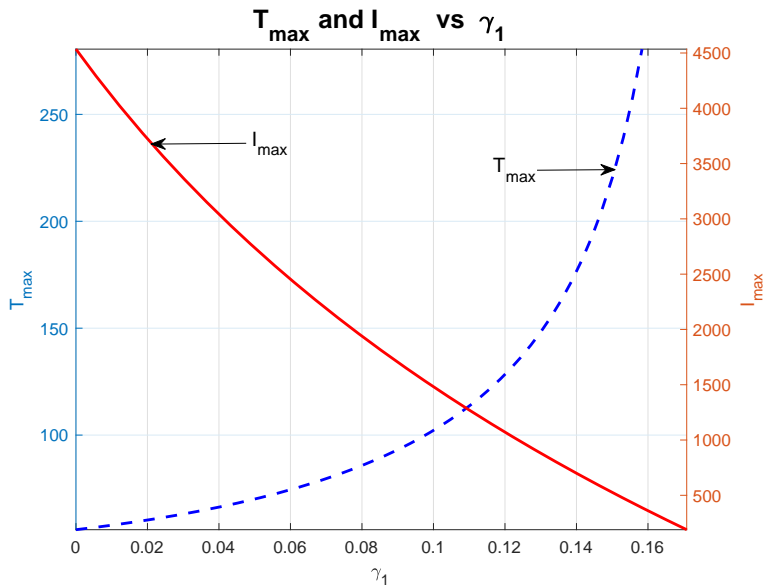}
  \end{center}
    \caption{$T_{max}$ and $I_{\max}$ are shown as a function of
    $\gamma_1$ where $\gamma_1$ corresponds to the rate infected are
    being tested and removed from the population. The other parameters
    are $N=10,000$ $\lambda=1/4,$ $\gamma=1/15+\gamma_1,$ $I(0)=1,$
    and $R(0)=0.$}
  \label{plot:SIR-NC-TImax} 
\end{figure}

In Fig.~\ref{plot:SIR-NC-TImax} we show how $I_{\max}$ and $T_{\max}$
behave if the infected could be removed at a faster rate than the
natural removal rate. The marginal relative increase in $T_{\max}$ is
significantly lower than the marginal decrease in $I_{\max}$ for small
$\gamma_1.$ In the example, observe that doubling $T_{\max}$ (from
about 55 to 110) requires $\gamma_1 > 0.11,$ i.e., about 11\% of those
infected should be removed via testing. However, halving $I_{\max}$
(from about 4500 to about 2250) requires only $\gamma_1=0.07.$

\begin{figure}[tbp]
  \begin{center}
    \begin{subfigure}{0.32\textwidth}
      \centering
      \includegraphics[width=\textwidth]{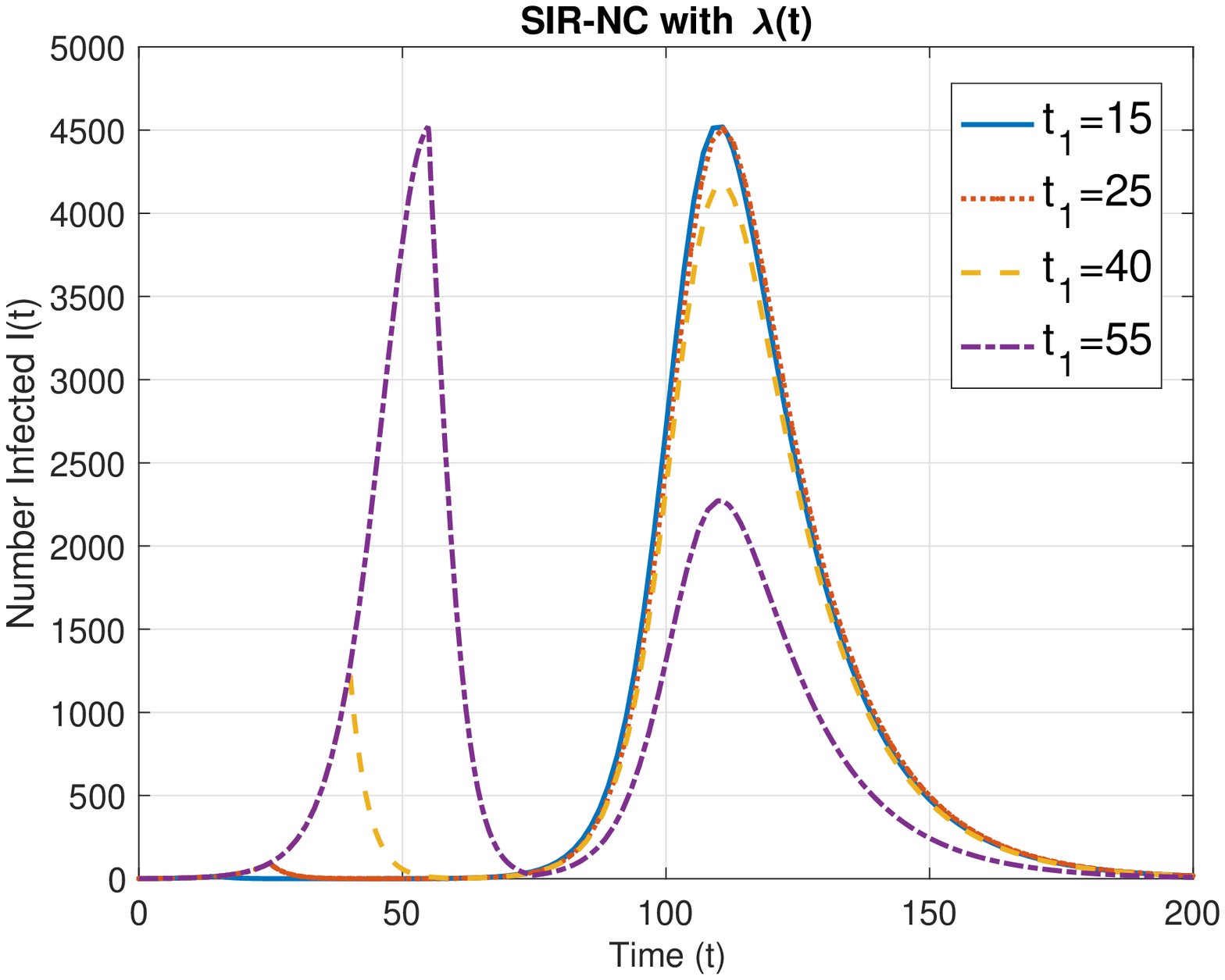}
      \caption{$\lambda(t),$ fixed $\gamma$}
    \end{subfigure}
    \begin{subfigure}{0.32\textwidth}
      \centering
      \includegraphics[width=\textwidth]{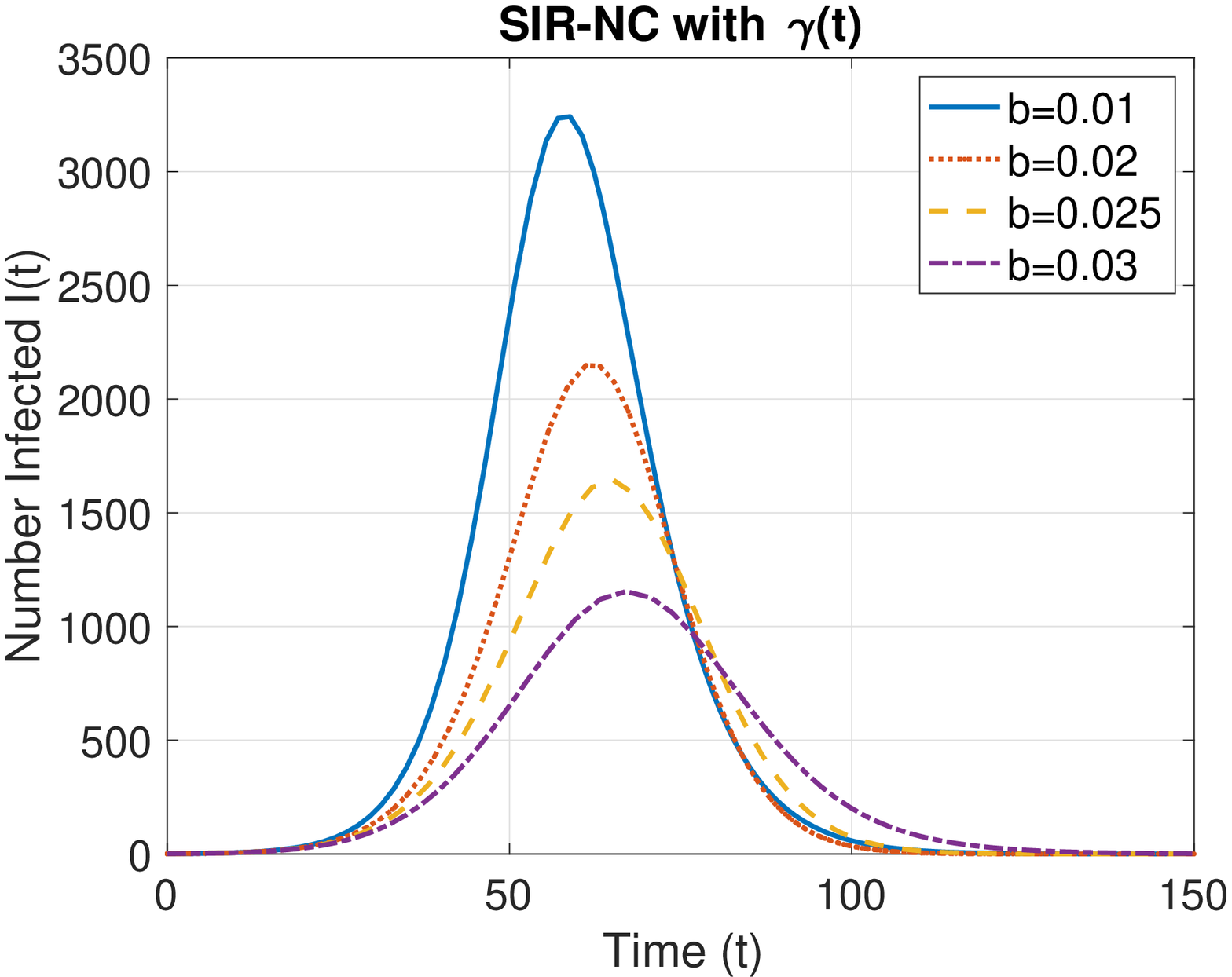}
      \caption{$\gamma(t),$ fixed $\lambda$}
    \end{subfigure}
    \begin{subfigure}{0.32\textwidth}
      \centering
      \includegraphics[width=\textwidth]{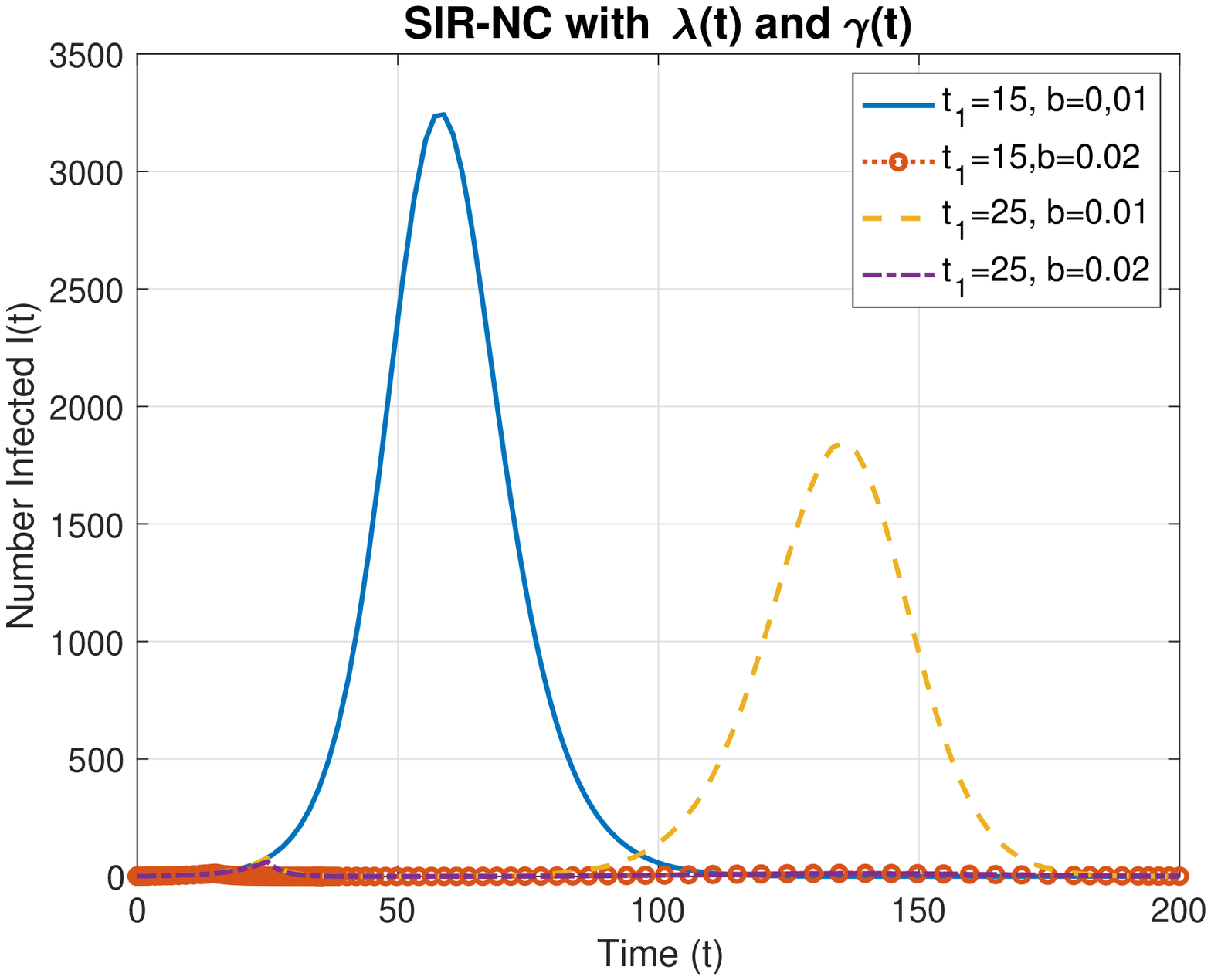}\\
      \caption{$\lambda(t)$ and $\gamma(t)$}
    \end{subfigure}
  \end{center}
  \caption{Evolution of $I(t)$ for the SIR-NC model when $\lambda,$
    $\gamma$ or both are functions of time. We have used $a=0.5$ and
    $T=20.$ The other parameters are $N=10,000$ $\lambda=1/4,$
    $\gamma=1/15,$ $I(0)=1,$ and $R(0)=0.$}
  \label{plot:SIR-NC-lambda-gamma-t} 
\end{figure}

We now illustrate the case of time varying $\lambda$ and $\gamma.$
Observe that if we assume that $\lambda(t)$ and $\gamma(t)$ are
piecewise constant, taking values $(\lambda_i, \gamma_i)$ in intervals
$(t_{i-1},t_i),$ $I(t)$ can be obtained by using
\eqref{eq:SIR-NC-results1}.

We now consider more general cases that may be applicable in epidemic
conditions. We argue later that the following explicit form of
$\lambda(t)$ would be of interest.
\begin{align*}
  \lambda(t) = \begin{cases}
    \lambda & \mbox{for $t>0$ and $t\notin (t_1, t_1+T)$}\\
    a \lambda & \mbox{$t\in (t_1, t_1+T)$}
  \end{cases}
\end{align*}
where $0 < a \leq 1.$ This models a limited lockdown period during
which there would be a significant reduction in the contact rate and
hence the infectivity. Further, we will use $\gamma(t)=
\gamma_0(1+bt)$ which models a super-exponential increase in testing
capacity and isolation.  In Fig.~\ref{plot:SIR-NC-lambda-gamma-t}a,
$\gamma(t)$ is a constant ($\gamma_0$) and $\lambda(t)$ has the form
above. We plot $I(t)$ for different values of $t_1.$ We see that an
early lockdown (corresponding to $t_1=15$ and 25) only delays the
$T_{\max}$ but does not change $I_{\max}.$ If the lockdown is started
close to the peak, it results in `second wave' with the second peak
being lower than the first. A moderate delay in the start of the
lockdown ($t_1=40$) arrests the first growth but does not
significantly affect $I_{\max}$ or $T_{\max}.$ If the lockdown starts
close to the peak, $I(t)$ comes down sharply but causes a second peak!
In Fig.~\ref{plot:SIR-NC-lambda-gamma-t}b we plot $I(t)$ keeping
$\lambda(t)$ constant, i.e., no lockdowns, and use $\gamma(t)$ as
above. This is to be expected because, the lockdowns only postpone the
inevitable. bserve that even a small amount of increased testing and
quarantining can be very effective in reducing $I_{\max}$
significantly; a 1\% increase in the rate of removal per unit time
(doubling testing capacity in 70 time units) reduces $I_{\max}$ by
about 30\% whereas a 3\% (double capacity in 24 days) can decrease
$I_{\max}$ by 70\%. $T_{\max}$ is not affected significantly in this.
Finally, in Fig.~\ref{plot:SIR-NC-lambda-gamma-t}c, we plot $I(t)$
when there is lockdown and also the $\gamma$ is increased. We see that
a lockdown and increased testing can bring the epidemic under control.

The preceding motivates us to propose formal optimal control
frameworks. They are detailed in Section~\ref{sec:SIR-NC-Control}.

\subsection{Proof of Theorem~1}
\label{sec:Thm1-proof}
Letting
\begin{eqnarray*}
  x(t) & := & \frac{S(t)}{S(t) + I(t)}, \\
  y(t) & := & \frac{I(t)}{S(t) + I(t)}
\end{eqnarray*}
we have $x(t) + y(t) \equiv 1.$ Also, $x(0) < 1$ (by the assumption that
$I(0) > 0$). We
can rewrite $\dot{x}(t)$ and  $\dot{y}(t)$ as follows. 
\begin{eqnarray*}
  \dot{x}(t) &=& \frac{\dot{S}(t)}{S(t) + I(t)} -
  \frac{S(t)(\dot{S}(t) + \dot{I}(t))}{(S(t) + I(t))^2} \\
  &=& -\lambda x(t)y(t) - x(t)(-\lambda x(t)y(t) +\lambda x(t)y(t) -
  \gamma y(t)) \\
  &=& (\gamma - \lambda)x(t)y(t) \\
  &=& (\gamma - \lambda)x(t)(1 - x(t)),\\
  && \ \\
  \dot{y}(t) &=& \frac{\dot{I}(t)}{S(t) + I(t)} -
  \frac{I(t)(\dot{S}(t) + \dot{I}(t))}{(S(t) + I(t))^2} \\
  &=& \lambda x(t)y(t) - \gamma y(t) - y(t)(-\lambda x(t)y(t) +\lambda
  x(t)y(t) - \gamma y(t)) \\ 
  &=& \lambda x(t)y(t) - \gamma y(t) + \gamma y(t)^2 \\
  &=& (\lambda - \gamma)y(t)(1 - y(t)).
\end{eqnarray*}

Then $\dot{x}(t) + \dot{y}(t) = \frac{d}{dt}(x(t) + y(t)) = 0$ as
expected. Now
\begin{align*}
  && \dot{x}(t) & = \  (\gamma - \lambda)x(t)(1 - x(t))\\
  & \Longrightarrow \hspace{0.3in}  &
  \frac{dx(t)}{x(t)(1 - x(t))} & =  (\gamma - \lambda)dt \\
  & \Longrightarrow \hspace{0.3in} & 
  dx(t)\left(\frac{1}{x(t)} + \frac{1}{1 - x(t)}\right) & = \ (\gamma
  - \lambda)dt \\
  & \Longrightarrow \hspace{0.3in} &
  \log\left(\frac{x(t)}{1 - x(t)}\right) & = \ c + (\gamma - \lambda)t
  \hspace{0.3in}  \mbox{for some} \ c \in \mathcal{R} \\
  & \Longrightarrow \hspace{0.3in} &
  \frac{x(t)}{1 - x(t)} & = \ Ce^{(\gamma - \lambda)t}, \ C := e^c \\
  & \Longrightarrow \hspace{0.3in} &
  x(t) & = \ \frac{Ce^{(\gamma - \lambda)t}}{1 + Ce^{(\gamma -
      \lambda)t}} \\ 
  & & y(t) &= 1 - x(t) \ = \ \frac{1}{1 + Ce^{(\gamma - \lambda)t}}
\end{align*}

For $\gamma < \lambda$, $x(t) \to 0$ and $y(t) \to 1$ as expected,
i.e., for $x(0) \in [0, 1)$, $(0, 1)$ is the asymptotically stable
equilibrium.  It also follows by setting $t = 0$ that $x(0) =
\frac{C}{1 + C}$, so that $C = \frac{x(0)}{ 1 - x(0)} =
\frac{S(0)}{I(0)}.$

We are now ready to derive the expression for $I(t).$ Letting $N(t) =
S(t) + I(t)$, we have 
\begin{eqnarray*}
  \dot{N}(t) &=& -\gamma I(t), \\
  \dot{I}(t) &=& I(t)\left(\lambda \frac{S(t)}{N(t)} - \gamma\right)\\
  &=& I(t)(\lambda x(t) - \gamma)  
 \end{eqnarray*}
Then clearly 
\begin{align*}
  I(t) & = \ I(0)e^{\int_0^t(\lambda x(s) - \gamma)ds} \nonumber \\
  & = I(0)e^{\int_0^t\left(\frac{\lambda Ce^{-(\lambda - \gamma)s}}{1
      + Ce^{-(\lambda - \gamma)s}} - \gamma\right)ds} \nonumber \\
  & = I(0)\left(\frac{1 + Ce^{(\gamma - \lambda)t}}{1 +
    C}\right)^\frac{\lambda}{(\gamma - \lambda)} \nonumber \\
  S(t) & = S(0)e^{-\int_0^t\left(\frac{\lambda }{1 + Ce^{-(\lambda -
        \gamma)s}}\right)ds} \nonumber \\
  & = S(0) \left(\frac{e^{(\lambda - \gamma)t} + C}{1 +
      C}\right)^{\frac{\lambda}{(\gamma - \lambda)}}.\nonumber
\end{align*}
Clearly, $I(t) > 0$ $\forall$ $t \geq 0.$ Thus we also see that
$\dot{N}(t) < 0$, implying $N(t)\downarrow N(\infty) \geq 0$. Then
$I(t) = y(t)N(t) \to 0$.  Also, from the explicit expression for
$S(t)$, we have $S(t) \to 0$, thus $N(\infty) = 0$.

To determine $T_{\max},$ note that initially $x(t) \approx 1$. Coupled
with $\lambda > \gamma$, we have $\dot{I}(t) > 0$, so that $I(t)$
increases till it hits the isocline $\frac{S}{N} = 1 -
\frac{\gamma}{\lambda}$ at time $T_{\max}.$ Thus $T_{\max}$ can be
calculated as follows. We have
\begin{eqnarray*}
  \dot{I}(t) &=& I(t)(\lambda x(t) - \gamma) \\ &=&
  I(t)\left(\frac{\lambda Ce^{(\gamma-\lambda)t}}{1 + Ce^{(\gamma -
      \lambda)t}} - \gamma\right).
\end{eqnarray*}
Thus $T_{\max}$ is the solution of the equation
\begin{align*}
  && C e^{(\gamma-\lambda)T_{\max}} &= \left(1 + Ce^{(\gamma -
    \lambda)T_{\max}}\right)\frac{\gamma}{\lambda} \\
  & \Longrightarrow \hspace{0.0in} & 
  \left(\frac{\lambda - \gamma}{\lambda}\right)Ce^{-(\lambda -
    \gamma) T_{\max}} &= \frac{\gamma}{\lambda} \\
  & \Longrightarrow \hspace{0.0in} &
  T_{\max} & = \frac{\left(\log\left[C\left(\frac{(\lambda -
        \gamma)}{\gamma}\right)\right]\right)^+}{ \lambda - \gamma}.
\end{align*}

This completes the proof of the constant coefficient case. The proof
for time-dependent $\lambda, \gamma$ goes exactly the same way. \hfill
$\qed$

Now consider
\begin{equation*}
  \dot{R}(t) = \beta I(t)
\end{equation*}
for some $\beta < \gamma$. Thus $\beta$ is the recovery rate and
$\gamma - \beta$ the death rate. Then
\begin{equation*}
  R(t) \uparrow R^\infty(\lambda, \gamma) := \beta\int_0^\infty I(s)ds
  < \infty.
\end{equation*}

\section{Further Variations}
\label{sec:variations}

We now consider several variations of the basic SIR-NC model that we
introduced in the previous section. Specifically, we present two types
of variations and characterize the solutions. The first set of
variations is motivated by the actions of several communities during
the COVID-19 pandemic. Communities are isolating themselves to
different degrees and also applying differing degrees of social
distancing rules within the community. To analyze this, in
Section~\ref{sec:SIR-NC-Import} we consider the case when a community
has `imported infections', i.e., an external agency introduces
infections into the community at a steady rate. In
Section~\ref{sec:SIR-NC-communities} we use the results of
Section~\ref{sec:SIR-NC-Import} to analyze the interaction two
separate communnities that interact with each other. We obtain
explicit characterizations for both these models. The second variation
is to introduce natural births and deaths in
Section~\ref{sec:birth-deaths} where we analyze the equilibrium
behaviour of the model.

\subsection{SIR-NC with imported infections}
\label{sec:SIR-NC-Import}

In many communities, infections are also induced by interactions with
other communities. Assume that each of the susceptible in the
population meets $\mu_e$ persons from outside of the community and
$p_e$ of those meetings result in an infection. Defining $\nu= p_e
\mu_e,$ we have the following SIR-NC model in which infections are
also imported from other communities.
\begin{eqnarray*}
  \frac{dS(t)}{dt} & = & - \lambda \frac{I(t) S(t)}{N(t)} - \nu S(t) \\
  \frac{dI(t)}{dt} & = & \lambda \frac{I(t) S(t)}{N(t)} + \nu S(t) -
  \gamma I(t) \\  
  \frac{dR(t)}{dt} & = & \gamma I(t) \\
  N(t) & = & S(t) + I(t) 
\end{eqnarray*}

We observe that the condition for an epidemic to break out is
\begin{displaymath}
  \lambda \frac{I(0)}{N(0)} + \nu > \gamma \frac{I(0)}{S(0)}
\end{displaymath}

Define $x(t) := \frac{S(t)}{N(t)}$. Along the lines of
Section~\ref{sec:Thm1-proof} we can show that $x(t)$ satisfies the
following o.d.e.
%
\begin{align*}
  \dot{x}(t) & = (\gamma - \lambda)x(t)(1 - x(t)) - \nu x(t) \\
   & =  -(\lambda - \gamma)x(t)(1 + \frac{\nu}{\lambda - \gamma} -
  x(t))
\end{align*}
This can be written as
\begin{align*}
  && \left(\frac{\lambda - \gamma}{\lambda - \gamma +
    \nu}\right)dx(t)\left(\frac{1}{x(t)} + \frac{1}{1 +
    \frac{\nu}{\lambda - \gamma} - x(t)}\right) & = -(\lambda -
  \gamma)dt \\
  & \Longrightarrow \hspace{0.1in} &
  \log\left(\frac{x(t)}{1 + \frac{\nu}{\lambda - \gamma} -
    x(t)}\right) & = c - (\lambda - \gamma + \nu)t.
    \end{align*}
    That is,
    \begin{eqnarray*}
      x(t) = \left(\frac{\lambda - \gamma + \nu}{\lambda -
        \gamma}\right)\frac{C_1e^{-(\lambda - \gamma + \nu)t}}{1 +
        C_1e^{-(\lambda - \gamma + \nu)t}}
    \end{eqnarray*}
for $C_1 = e^c$. Setting $t = 0$, we get 
\begin{displaymath}
  C_1 = \frac{x(0)}{\frac{\lambda - \gamma + \nu}{\lambda - \gamma} -
    x(0)} \ .
\end{displaymath}
Now,
\begin{eqnarray*}
  y(t) &=& 1 - x(t) \\
  &=& \frac{(\lambda - \gamma) - \nu C_1e^{-(\lambda - \gamma +
      \nu)t}}{(\lambda - \gamma)(1 + C_1e^{-(\lambda - \gamma +
      \nu)t})}.
\end{eqnarray*}
Thus
\begin{align*}
  && \dot{S}(t) &= -\lambda S(t)(1 - x(t)) - \nu S(t) \\  
  && &= -S(t)((\lambda + \nu) - \lambda x(t)) \\  
  && &= -S(t)\left((\lambda + \nu) - \frac{\lambda(\lambda - \gamma +
    \nu)}{\lambda - \gamma}\left(\frac{C_1e^{-(\lambda - \gamma +
      \nu)t}}{1 + C_1e^{-(\lambda - \gamma + \nu)t}}\right)\right) \\
  & \Longrightarrow & 
  S(t) &= S(0)e^{-(\lambda + \nu)t + \frac{\lambda(\lambda - \gamma +
      \nu)}{\lambda - \gamma}\int_0^t\frac{Ce_1^{-(\lambda - \gamma +
        \nu)s}}{1 + C_1e^{-(\lambda - \gamma + \nu)s}} ds} \\
  && &= S(0)e^{-(\lambda + \nu)t - \frac{\lambda}{\lambda -
      \gamma}\log\left( \frac{1 + C_1e^{-(\lambda - \gamma +
        \nu)t}}{1+C_1}\right)} \\
  && &= S(0)\left(\frac{1 + C_1e^{-(\lambda - \gamma +
      \nu)t}}{1+C_1}\right)^{-\frac{\lambda}{\lambda -
      \gamma}}e^{-(\lambda + \nu)t}.
\end{align*}
\remove{
  \begin{align*}
  && \dot{I}(t) & = I(t)(\lambda x(t) + \nu S(t) - \gamma I(t)) \\
  & \Longrightarrow &
  I(t) &= I(0) \exp\Big\{-\gamma t - \frac{\lambda}{\lambda -
    \gamma}\log\left(1 + C_1e^{-(\lambda - \gamma + \nu)}t\right) - \\
  &&& \ \ \ \nu S(0)\int_0^te^{-(\lambda + \nu)s}\left(1 +
  C_1e^{-(\lambda - \gamma + \nu)s}\right)^{- \frac{\lambda}{\lambda -
      \gamma}}ds\Big\} \\
  && & = I(0)\left(1 + C_1e^{-(\lambda - \gamma +
    \nu)t}\right)^{\frac{\lambda}{\lambda - \gamma}}e^{-\gamma
    t}\times \\
  && & \ \ \ e^{\nu S(0)\int_0^t\left(1 + C_1e^{-(\lambda - \gamma +
      \nu)s}\right)^{-\frac{\lambda}{\lambda - \gamma}}e^{-(\lambda +
      \nu)s}ds}.
\end{align*}
}

Then
\begin{align*}
  && \dot{I}(t) & = I(t)\lambda x(t) + \nu S(t) - \gamma I(t) \\
  &&            & = I(t)(\lambda x(t) -\gamma) + \nu S(t) \\
  & \Longrightarrow & I(t) & \ = \ I(0)\exp\left(\int_0^t(\lambda x(s)
  - \gamma)ds\right) + \nu\int_0^t \exp\left(\int_s^t(\lambda x(y) -
  \gamma) dy \right) S(s)ds.
\end{align*}
We have just derived the following replacement for
Theorem~\ref{thm:SIR-NC} for the case of SIR-NC with imported
infections. $T_{\max}$ for this model can be obtained as in
Theorem~\ref{thm:SIR-NC}.  

\begin{theorem}
  The solution to the SIR-NC system with imported infections is as
  follows.
  \begin{align}
    S(t) &= S(0)\left(\frac{1 + C_1e^{-(\lambda - \gamma +
        \nu)t}}{1+C_1}\right)^{-\frac{\lambda}{\lambda -
        \gamma}}e^{-(\lambda + \nu)t}. \nonumber \\
    I(t) & = I(0) \left(
    \frac{1+C_1 e^{-(\lambda-\gamma+\nu)t}}{1+C_1}
    \right)^{-\frac{\lambda}{\lambda-\gamma}}
    e^{- \gamma t} \nonumber \\
    & \hspace{0.3in} + \nu \int_0^t \left( \left(\frac{1+C_1
    e^{-(\lambda-\gamma+\nu)t}}{1+C_1
    e^{-(\lambda-\gamma+\nu)s }} 
    \right)^{- \frac{\lambda}{\lambda-\gamma}}
    e^{ - \gamma (t - s)} \right) S(s) \ ds
    \label{eq:SIR-NC2-results}
  \end{align}
  where 
  \begin{displaymath}
    C_1 = \frac{S(0)(\lambda - \gamma)}{I(0) (\lambda - \gamma + \nu)
      + S(0) \nu }
  \end{displaymath}
  \label{thm:SIR-NC-Import}
\end{theorem}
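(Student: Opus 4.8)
The plan is to mimic the proof of Theorem~\ref{thm:SIR-NC}: reduce the autonomous $(S,I)$ subsystem to a single scalar logistic-type equation for the fraction $x(t) := S(t)/N(t)$, solve it in closed form, and then recover $S(t)$ and $I(t)$ by quadrature, keeping track of the extra $\nu S(t)$ term that the imported infections contribute.

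First I would observe that, even with importation, the $\nu S$ terms cancel in $\dot S + \dot I$, so $\dot N(t) = -\gamma I(t)$ still holds. Differentiating $x(t) = S(t)/N(t)$ and substituting the state equations then gives
\[
\dot x(t) \;=\; (\gamma-\lambda)\,x(t)\bigl(1-x(t)\bigr) - \nu\,x(t) \;=\; -(\lambda-\gamma)\,x(t)\Bigl(1 + \tfrac{\nu}{\lambda-\gamma} - x(t)\Bigr),
\]
a logistic equation. Separating variables and using the partial-fraction identity $\tfrac{1}{x(1+\frac{\nu}{\lambda-\gamma}-x)} = \tfrac{\lambda-\gamma}{\lambda-\gamma+\nu}\bigl(\tfrac1x + \tfrac{1}{1+\frac{\nu}{\lambda-\gamma}-x}\bigr)$ integrates to $\log\tfrac{x(t)}{1+\frac{\nu}{\lambda-\gamma}-x(t)} = c - (\lambda-\gamma+\nu)t$, hence
\[
x(t) \;=\; \Bigl(\tfrac{\lambda-\gamma+\nu}{\lambda-\gamma}\Bigr)\,\frac{C_1 e^{-(\lambda-\gamma+\nu)t}}{1 + C_1 e^{-(\lambda-\gamma+\nu)t}}, \qquad C_1 := e^c .
\]
Evaluating at $t=0$ with $x(0) = S(0)/(S(0)+I(0))$ and clearing denominators yields $C_1 = \dfrac{S(0)(\lambda-\gamma)}{I(0)(\lambda-\gamma+\nu)+S(0)\nu}$, the constant claimed in the statement.

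With $x(\cdot)$ in hand, $S(t)$ follows from $\dot S(t)/S(t) = -(\lambda+\nu) + \lambda x(t)$: the integral $\int_0^t \lambda x(s)\,ds$ collapses, via the substitution $\tfrac{C_1 e^{-(\lambda-\gamma+\nu)s}}{1+C_1 e^{-(\lambda-\gamma+\nu)s}} = -\tfrac{1}{\lambda-\gamma+\nu}\tfrac{d}{ds}\log(1+C_1 e^{-(\lambda-\gamma+\nu)s})$, to $-\tfrac{\lambda}{\lambda-\gamma}\log\tfrac{1+C_1 e^{-(\lambda-\gamma+\nu)t}}{1+C_1}$, giving the first line of \eqref{eq:SIR-NC2-results}. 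For $I(t)$ the key observation is that, once $x(\cdot)$ and $S(\cdot)$ are known, $\dot I(t) = I(t)\bigl(\lambda x(t)-\gamma\bigr) + \nu S(t)$ is a \emph{linear} first-order ODE in $I$; variation of parameters gives
\[
I(t) = I(0)\,e^{\int_0^t(\lambda x(s)-\gamma)\,ds} + \nu\int_0^t e^{\int_s^t(\lambda x(y)-\gamma)\,dy}\,S(s)\,ds,
\]
and substituting $e^{\int_s^t(\lambda x-\gamma)\,dy} = \bigl(\tfrac{1+C_1 e^{-(\lambda-\gamma+\nu)t}}{1+C_1 e^{-(\lambda-\gamma+\nu)s}}\bigr)^{-\lambda/(\lambda-\gamma)} e^{-\gamma(t-s)}$ (the same computation as for $S$, now over $[s,t]$) produces the displayed formula for $I(t)$; the stated $T_{\max}$ is then obtained by setting $\dot I(t)=0$, exactly as in Theorem~\ref{thm:SIR-NC}.

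The partial-fraction integrations are routine; the one place to be careful is the exponent bookkeeping in $\int \lambda x(s)\,ds$ — the prefactor $\tfrac{\lambda-\gamma+\nu}{\lambda-\gamma}$ in $x(t)$ cancels against the rate $(\lambda-\gamma+\nu)$ in the exponential, so the surviving power is $-\tfrac{\lambda}{\lambda-\gamma}$ and not $-\tfrac{\lambda}{\lambda-\gamma+\nu}$. A mild conceptual point worth flagging is that, unlike the base SIR-NC model, $I(t)$ is no longer a pure power-times-exponential but carries the Duhamel convolution term induced by the steady importation, so its precise long-run behaviour requires a short asymptotic estimate of that integral rather than being read off the closed form.
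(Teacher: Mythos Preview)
Your proposal is correct and follows essentially the same route as the paper: reduce to the logistic ODE for $x(t)=S(t)/N(t)$, solve by separation of variables and partial fractions to obtain $x(t)$ and $C_1$, integrate $\dot S/S = -(\lambda+\nu)+\lambda x(t)$ for $S(t)$, and then treat $\dot I = I(\lambda x - \gamma) + \nu S$ as a linear first-order ODE solved by variation of parameters. Your remarks on the exponent bookkeeping and the Duhamel term are accurate and match the paper's derivation.
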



\begin{figure}[tbp]
  \begin{subfigure}{0.47\textwidth}
    \centering
    \includegraphics[width=\textwidth]{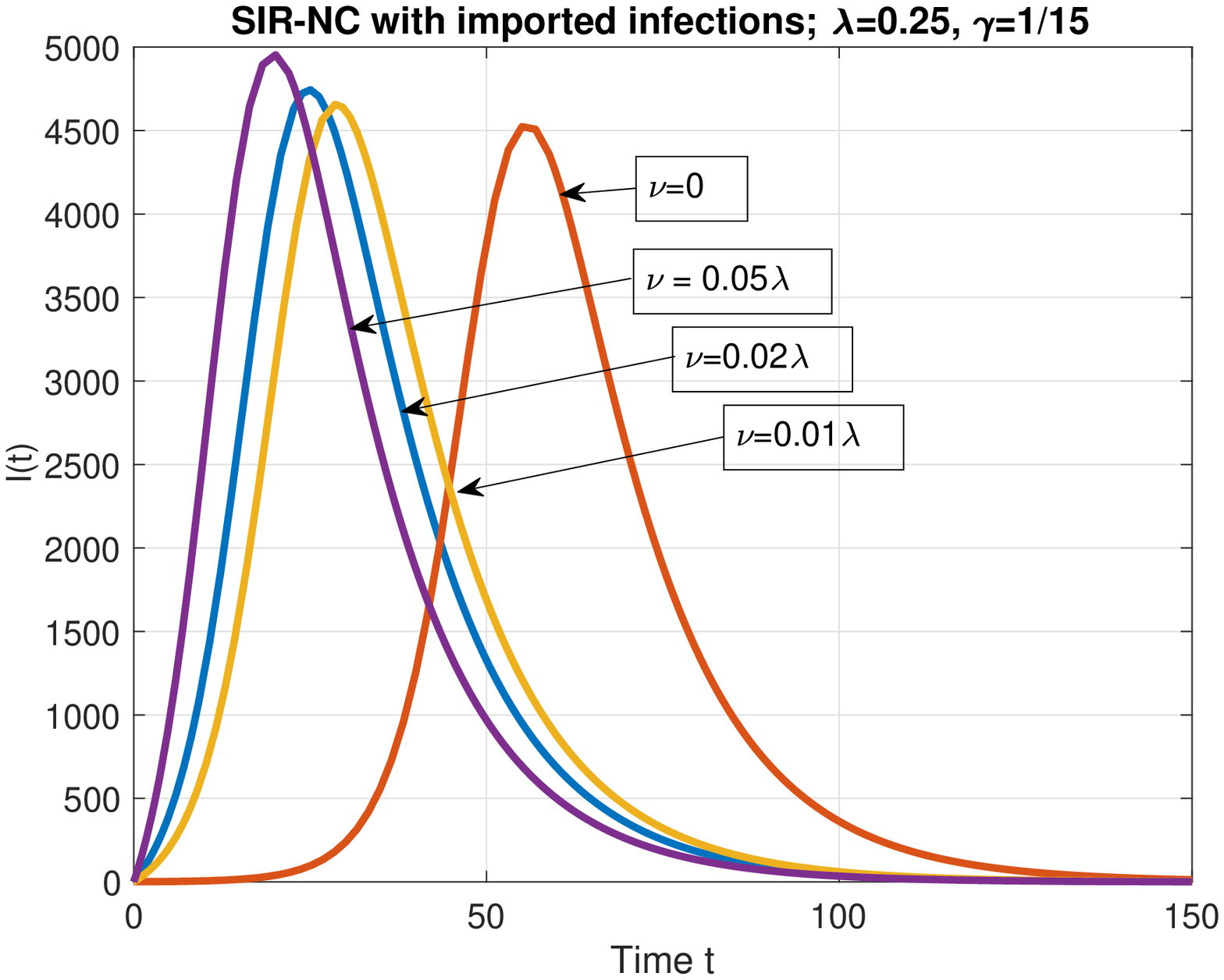}
    \caption{$I(t)$ for different values of $\nu.$ }
    \label{plot:SIR-NC-import1}
  \end{subfigure}
  \begin{subfigure}{0.47\textwidth}
    \centering
    \includegraphics[width=\textwidth]{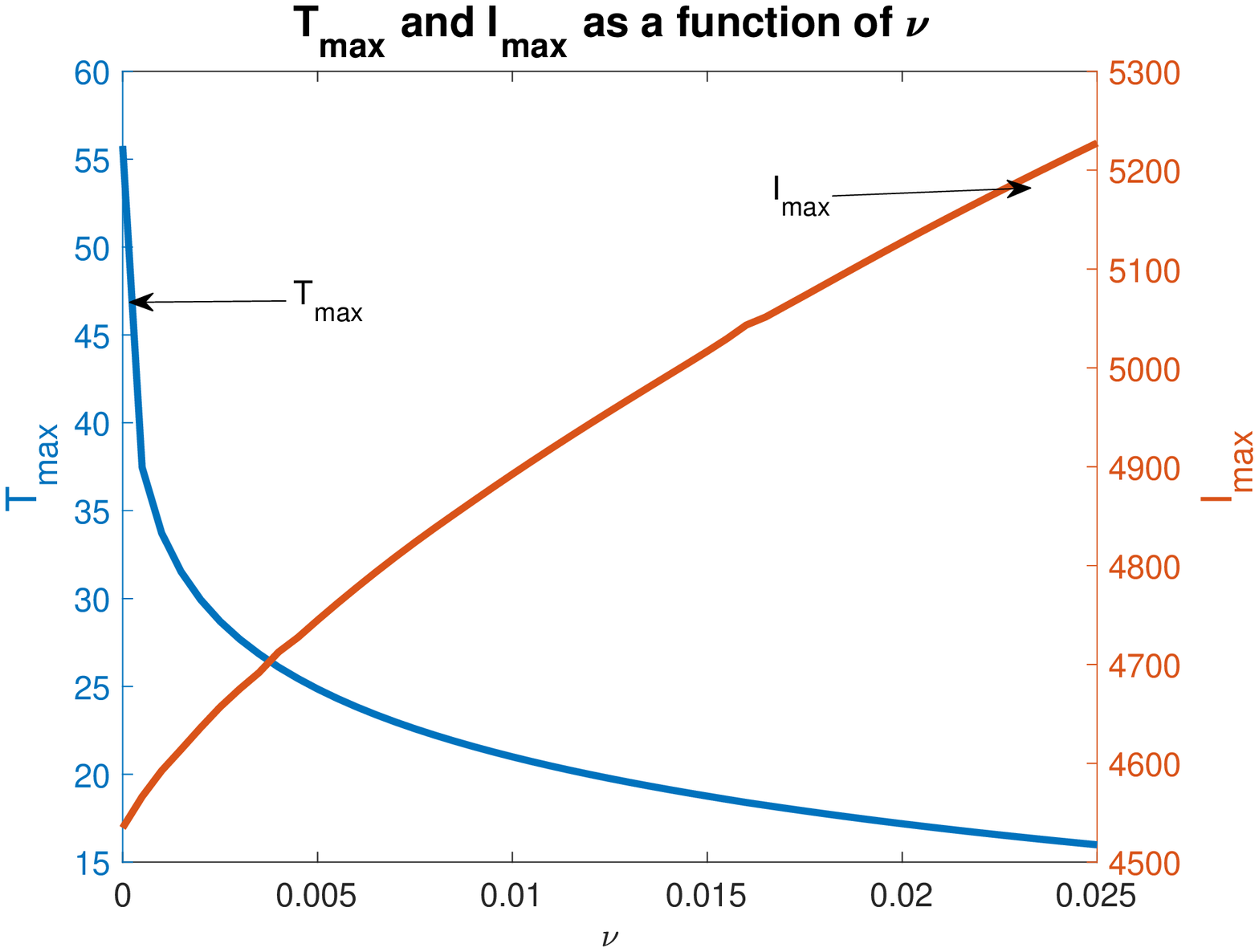}
    \caption{$T_{\max}$, $I_{\max}$ as a function of $\nu.$ }
    \label{plot:SIR-NC-import2}
  \end{subfigure}
  \caption{Illustrating the effect of imported infections. We have
    used $\lambda=1/4$ and $\gamma=1/15m,$ $I(0)=1,$ and $S(0)=9999.$
  }
\end{figure}

\subsubsection{Numerical Examples}
We now illustrate this model with some numerical
examples. Fig.~\ref{plot:SIR-NC-import1} illustrates $I(t)$ for
different values of $\nu.$ Observe that even a small $\nu$ advances
$T_{\max}$ significantly, $\nu=0.01\lambda$ reduces $T_{\max}$ by
nearly half (from about 55 to about 29). This dependence is studied in
more detail in Fig.~\ref{plot:SIR-NC-import2} that shows $T_{\max}$
and $I_{\max}$ as a function of $\nu.$ The marginal reduction in
$T_{\max}$ is significant for small values of $\nu$ but $I_{\max}$ is
less dramatic.

\subsection{SIR-NC in interacting communities}
\label{sec:SIR-NC-communities}

Now consider two interacting communities, labeled $a$ and $b,$ that
have different intra-community contact and infection infection rates,
and in addition, inter-community contact and infection rates.  Let
$N_a$ and $N_b$ be the size of the two communities. Defining
$\lambda_a,$ $\lambda_b$, $\gamma_a$, $\gamma_b$ and $\lambda_{ab}$ in
the obvious way, we have the following equations defining the system
evolution.
\begin{eqnarray}
  \frac{dS_a(t)}{dt} & = & - S_a(t) \left( \lambda_a
  \frac{I_a(t)}{N_a(t)} + \lambda_{ab} \frac{I_b(t)}{N_b(t)}
  \right) \label{a1} \\
  \frac{dI_a(t)}{dt} & = & S_a(t)\left( \lambda_a
  \frac{I_a(t)}{N_a(t)} + \lambda_{ab} \frac{I_b(t)}{N_b(t)} \right) -
  \gamma_a I_a(t) \label{a2} \\
  \frac{dR_a(t)}{dt} & = & \beta_a I_a(t) \label{a3} \\
  \frac{dS_b(t)}{dt} & = & - S_b(t) \left( \lambda_b
  \frac{I_b(t)}{N_b(t)} + \lambda_{ab} \frac{I_a(t)}{N_a(t)} \right)
  \label{b1}   \\  
  \frac{dI_b(t)}{dt} & = & S_b(t) \left( \lambda_b
  \frac{I_b(t)}{N_b(t)} + \lambda_{ab} \frac{I_a(t)}{N_a(t)} \right) -
  \gamma_b I_b(t) \label{b2} \\  
  \frac{dR_b(t)}{dt} & = & \beta_b I_b(t) \label{b3} \\  
  N_a & = & S_a(t) + I_a(t)  \label{a4} \\
  N_b & = & S_b(t) + I_b(t) \label{b4}
\end{eqnarray}
Letting
\begin{align*}
  & x_a(t) := \frac{S_a(t)}{N_a(t)}, \ y_a(t) := 1 - x_a(t) =
  \frac{I_a(t)}{N_a(t)}, \\
  & x_b(t) := \frac{x_b(t)}{N_b(t)}, \ y_b(t) := 1 - x_b(t) =
  \frac{I_b(t)}{N_b(t)},
\end{align*}
we have the coupled dynamics
\begin{eqnarray*}
  \dot{x}_a(t) &=& f_a(x_a(t), x_b(t)) := - (\lambda_a -
  \gamma_a)x_a(t)(1 - x_a(t)) - \lambda_{ab}x_a(t)(1 - x_b(t)), \\  
  \dot{x}_b(t) &=& f_b(x_a(t), x_b(t)) := - (\lambda_b -
  \gamma_b)x_b(t)(1 - x_b(t)) - \lambda_{ab}x_b(t)(1 - x_a(t)),  
\end{eqnarray*}
and an analogous dynamics for $(y_a(\cdot), y_b(\cdot))$. This is a
cooperative dynamics in the sense of \cite{Hirsch85} and by virtue of
having bounded trajectories, will converge for almost all initial
conditions to the set of its equilibria. These can be found by setting
the right hand side equal to zero, and their stability can be analyzed
using local linearization. We do not pursue this here.  This equation,
however, is not analytically tractable. So we aim for an approximate
solution by assuming that $\lambda_{ab} = \epsilon,$ $0 < \epsilon <<
\frac{1}{T},$ where $T > 0$ is the time horizon of interest. Let
$S^0_i(t), I^0_i(t), i = a,b,$ denote the decoupled dynamics
corresponding to $\epsilon = 0$. Then for general $\epsilon > 0$, we
can use the results of the preceding section to obtain an
approximation for $S_a(t), I_a(t), T^a_{\max}$ ($:=$ the corresponding
peaking time) as follows.

However, the two are coupled. We can decouple them by
resorting to the approximations 
\begin{displaymath}
  \frac{I_i(t)}{N_i(t)} \approx \frac{I_i^0(t)}{N_i^0(t)}, \ i = 1,2.
\end{displaymath}
In fact, $S_i(t) = S^0_i(t) + O(\epsilon T), I_i(t) = I_i^0(t) +
O(\epsilon T), i = 1,2, t \in [0, T]$. An exact expression for this
error term can be given in terms of Alekseev's nonlinear variation of
constants formula \cite{Alekseev61}, \cite{Brauer66} as follows.

View the dynamics \eqref{a1})-\eqref{a3}, \eqref{a4} of community $a$
(a symmetric argument works for community $b$) as a perturbation of
the uncoupled dynamics
\begin{eqnarray}
  \dot{S}(t) &=& -\lambda\frac{S(t)I(t)}{S(t) + I(t)}, \label{nil1} \\
  \dot{I}(t) &=& \lambda\frac{S(t)I(t)}{S(t) + I(t)} - \gamma
  I(t), \label{nil2} \\  
  \dot{R}(t) &=& \beta I(t). \label{nil3}
\end{eqnarray}
The linearization of (\ref{nil1})-(\ref{nil2}) around the nominal
trajectory $(S(\cdot), I(\cdot), R(\cdot))$ is given as
follows. Letting $S_a(t) \approx S(t) + \ep s(t)$, $I_a(t) \approx
I(t) + \ep i(t)$, we have 
\begin{eqnarray}
  \dot{s}(t) &=&\left(-\lambda \frac{I(t)^2}{(S(t) +
    I(t))^2}\right)s(t) + \left(-\lambda \frac{S(t)^2}{(S(t) +
    I(t))^2}\right)i(t), \label{lin1} \\
  \dot{i}(t) &=& \left(\lambda \frac{I(t)^2}{(S(t) +
    I(t))^2}\right)s(t) + \left(\lambda \frac{S(t)^2}{(S(t) + I(t))^2}
  - \gamma\right)i(t).
  \label{lin2}
\end{eqnarray}
Write this as
\begin{displaymath}
  \dot{q}(t)  = A(t)q(t), \ t \geq t_0,
\end{displaymath}
where $q(t) := [s(t), i(t)]^T$ and $A(t)$ is defined in the obvious
manner. Let
\begin{displaymath}
  (S(t,t_0; S_0,I_0), I(t,t_0;S_0,I_0)), \ t \geq t_0,
\end{displaymath}
denote the solution for (\ref{nil1})-(\ref{nil2}) for $t \geq t_0$
with $S(t_0) = S_0, I(t_0) = I_0$. Define 
\begin{displaymath}
  (S(t,t_0; S_0,I_0,R_0), I(t,t_0;S_0,I_0,R_0), R(t,t_0;
  S_0,I_0,R_0)), \ t \geq t_0,
\end{displaymath}
as above, but with (\ref{a1})-(\ref{a3}) replacing
(\ref{nil1})-(\ref{nil2}) and $S^\ep(t_0) = S_0, I^\ep(t_0) = I_0,
R^\ep(t_0) = R_0$. Also, let $\Phi(t,t_0; S_0,I_0), t \geq t_0$ denote
the fundamental solution of (\ref{lin1})-(\ref{lin2}), i.e., solution
to the matrix linear system
\begin{displaymath}
  \dot{\Phi}(t,t_0; S_0,I_0) = A(t)\Phi(t,t_0; S_0,I_0), \ t \geq t_0;
  \ \Phi(t_0,t_0; S_0,I_0) = I.
\end{displaymath}
Let 
\begin{eqnarray*}
  X(t,t_0; S_0,I_0) &:=& [S(t,t_0; S_0,I_0), I(t,t_0; S_0,I_0)]^T,\\
  X^\ep(t,t_0; S_0,I_0,R_0) &:=& [S(t,t_0; S_0,I_0,R_0),
    I(t,t_0;S_0,I_0,R_0)]^T,
\end{eqnarray*}
and  
\begin{displaymath}
  \varepsilon(t) := \left[ \ \lambda_{ab}\frac{I_b(t)}{N_b(t)} ,
    \ \lambda_{ab}\frac{I_b(t)}{N_b(t)} \ \right].
\end{displaymath}
Then by Alekseev's nonlinear variation of constants formula
\cite{Alekseev61}, \cite{Brauer66},
\begin{eqnarray}
  \lefteqn{X^\ep(t,t_0; S_0,I_0,R_0) = X(t,t_0; S_0,I_0) \ +} \nonumber \\
  && \int_{t_0}^t\Phi(t,s; S^\ep(s,t_0; S_0,I_0,R_0), I^\ep(s,t_0;
  S_0,I_0,R_0))\varepsilon(s)ds. \label{Alek1} 
\end{eqnarray}
This is an exact expression for the net perturbation of the solution,
albeit implicit. The second term on the right is $O(\ep)$. Thus if we
ignore the $O(\ep^2)$ terms, we can write
\begin{eqnarray}
  \lefteqn{X^\ep(t,t_0; S_0,I_0,R_0) \approx X(t,t_0; S_0,I_0) \ +}
  \nonumber \\
  && \int_{t_0}^t\Phi(t,s; S(s,t_0; S_0,I_0), I(s,t_0;
  S_0,I_0))\varepsilon(s)ds. \label{Alek1a}
\end{eqnarray}
This gives an explicit expression for the net perturbation of the
solution, albeit only approximate to within $O(\ep^2)$.  Note,
however, that this approximation will be valid only for a finite time
interval, because the hypothesis that $\varepsilon(t) = O(\ep)$ may
not continue to hold unless $T$ is small.  For computational purposes,
the above procedure can be repeated over such small time intervals.

\textbf{Remark:} The case of $\lambda_{ab} \neq \lambda_{ba}$ is also
of interest, e.g., when one community practices the safety
recommendations more strictly than the other community. This case can
be analyzed analogously.

\begin{figure}[tbp]
  \begin{subfigure}{0.4\textwidth}
    \centering
    \includegraphics[width=\textwidth]{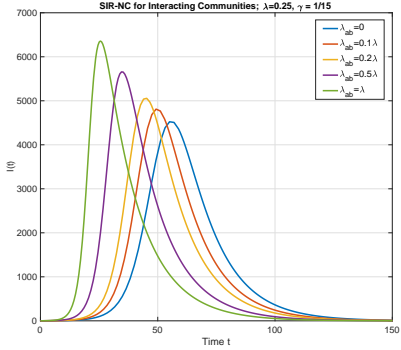}
    \caption{The symmetric case with $N_a=N_b=10,000,$
      $\lambda_a=\lambda_b=\lambda=1/4,$ $\gamma_a=\gamma_b=1/15,$ and
    $\lambda_{ab}=\lambda_{ba}.$}
    \label{plot:SIR-NC-community-symm}
  \end{subfigure}
  \hfill 
  \begin{subfigure}{0.4\textwidth}
    \centering
    \includegraphics[width=\textwidth]{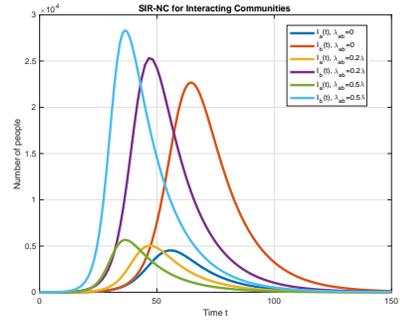}
    \caption{Asymmetry in size with $N_a=5,000, N_b=10,000,$
      $\lambda_a=\lambda_b=\lambda=1/4,$ $\gamma_a=\gamma_b=1/15,$ and
      $\lambda_{ab}=\lambda_{ba}.$ }
    \label{plot:SIR-NC-community-size-asymm}
  \end{subfigure}

  \begin{subfigure}{0.4\textwidth}
    \centering
    \includegraphics[width=\textwidth]{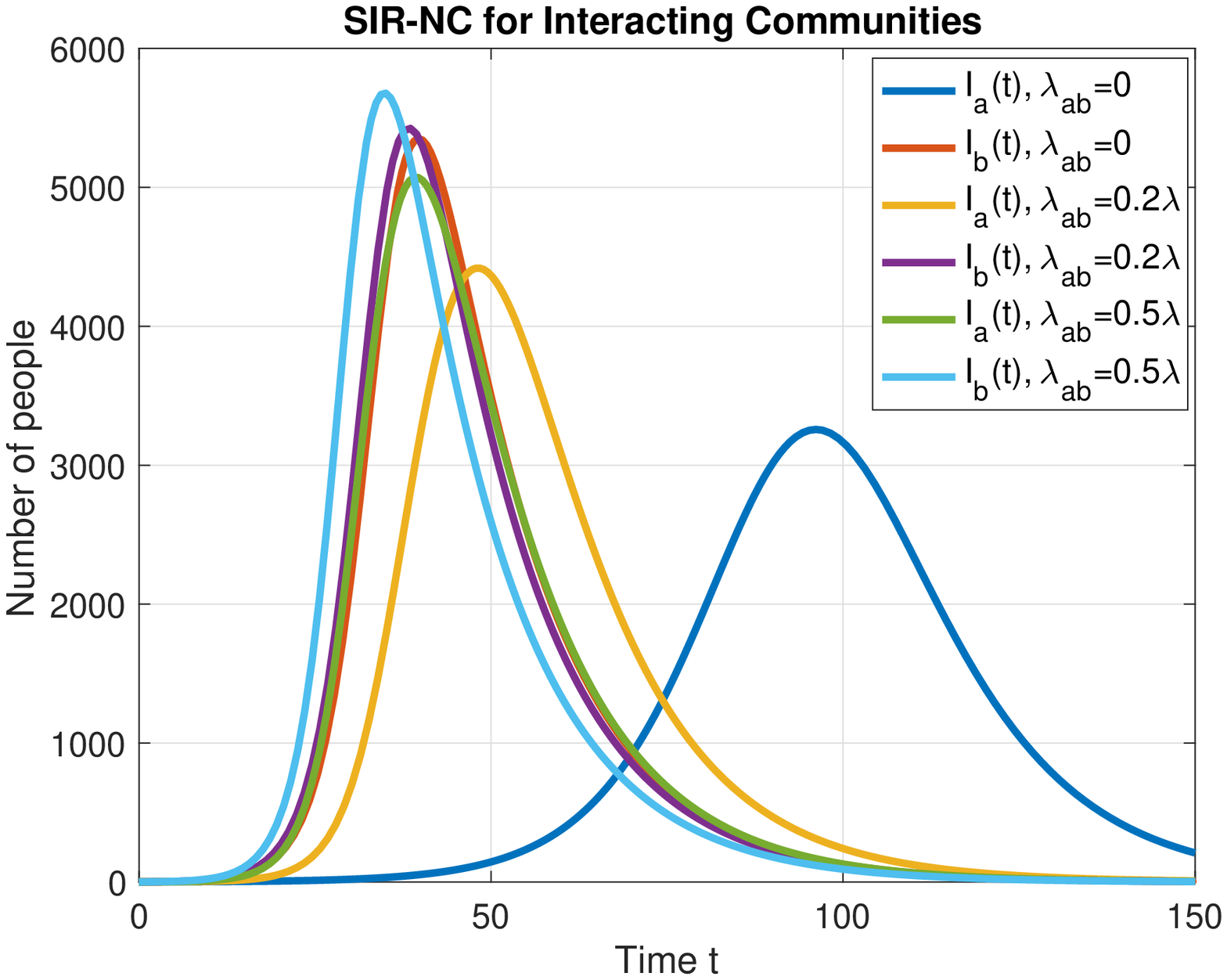}
    \caption{Asymmetry in $\lambda$ with $N_a=N_b=10,000,$
      $\lambda_a=\lambda=1/6,$ $/\lambda_b=\lambda/2,$
      $\gamma_a=\gamma_b=1/15,$ and $\lambda_{ab}=\lambda_{ba}.$ }
        \label{plot:SIR-NC-community-lambda-asymm}
  \end{subfigure}
  \hfill 
  \begin{subfigure}{0.4\textwidth}
    \centering
    \includegraphics[width=\textwidth]{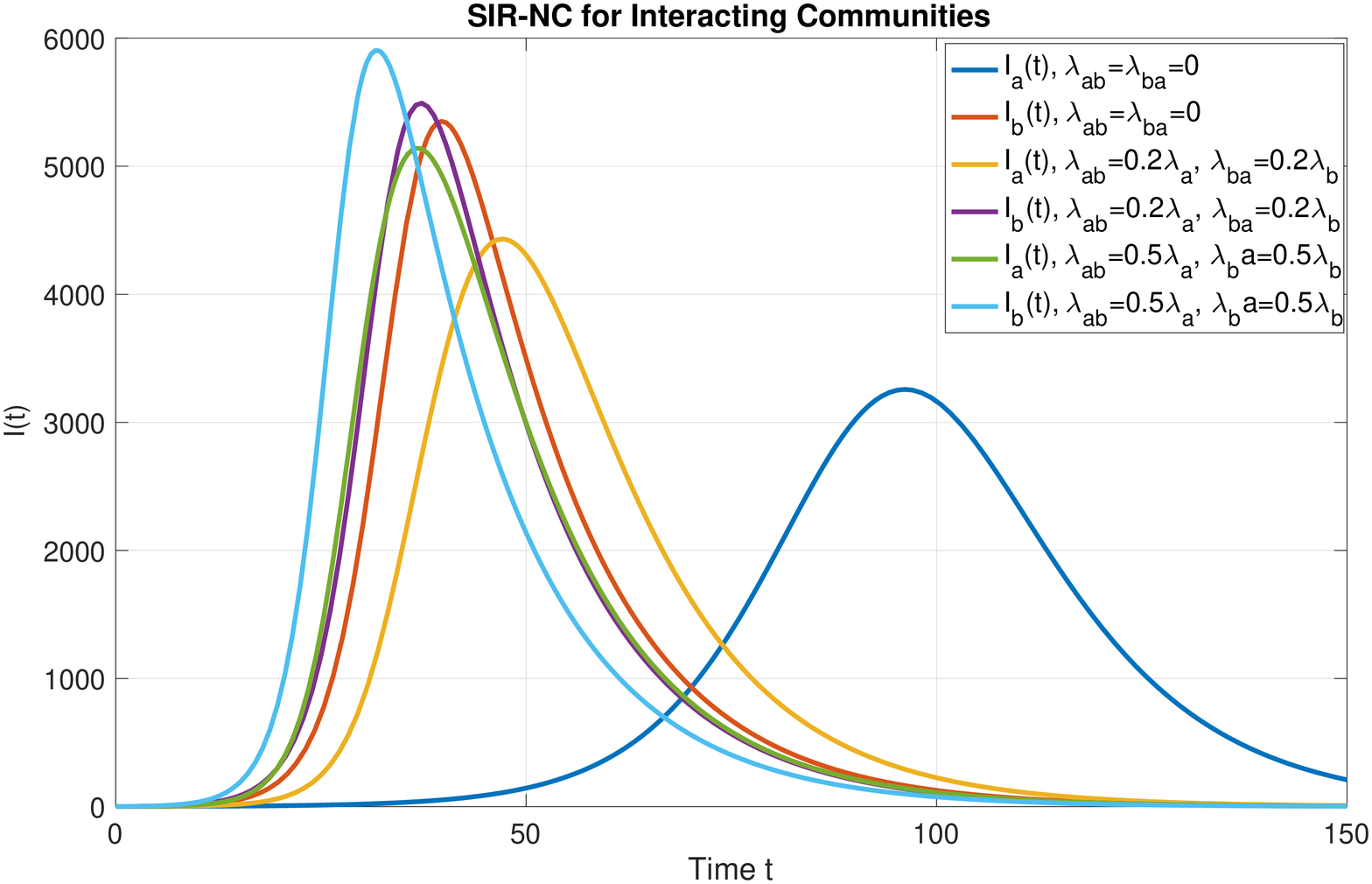} 
    \caption{Asymmetry in $\lambda$ and size. $N_a=5,000, N_b=10,000,$
      $\lambda_a=\lambda=1/6,$ $\lambda_b=\lambda/2,$ and
      $\gamma_a=\gamma_b=1/15.$ }
    \label{plot:SIR-NC-community-both-asymm}
  \end{subfigure}

  \caption{Two interacting communities. The effect of $\lambda_{ab}$
    and $\lambda_{ba}$ on $I_a(t)$ and $I_b(t).$}
  \label{plot:SIR-NC-community}  
\end{figure}

\begin{figure}[tbp]
  \centering
  \includegraphics[width=0.6\textwidth]{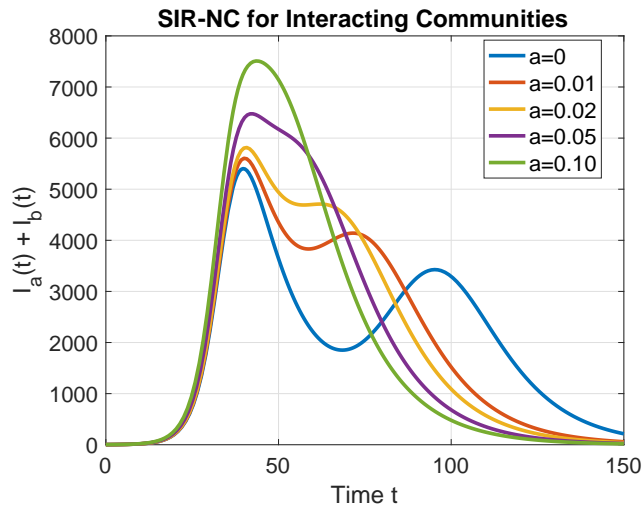} 
  \caption{$I_a(t)+I)b(t)$ with two asymmetric
    communities. $N_a=5,000, N_b=10,000,$ $\lambda_a=1/6,$ $\lambda_b
    = 1/3,$ $\lambda_{ab}=a \lambda_a,$ $\lambda_{ba}=a \lambda_b,$
    and $\gamma_a=\gamma_b=1/15.$}
  \label{plot:SIR-NC-community-total}  
\end{figure}

\subsubsection{Numerical Examples}
Fig.~\ref{plot:SIR-NC-community} illustrates the effects of the
intercommunity interaction rates, $\lambda_{ab}$ and $\lambda_{ba},$
on $I(t)$ when different kinds of communities interact. In
Fig.~\ref{plot:SIR-NC-community-symm} we consider two similar
communities with the same initial population and same $\lambda.$ The
trajectories for $I(t)$ for both communities are, as is to be
expected, identical. However, even a small amount of interaction can
significantly affect the trajectory, leading to increased $I_{\max}$
and a reduced $T_{\max}.$ This is in line with the results of the
preceding subsection. In Fig.~\ref{plot:SIR-NC-community-size-asymm},
we see that when $\lambda_a=\lambda_b$ and $\lambda_{ab} =
\lambda_{ba},$ and the sizes are significantly different, the
trajectories of $I(t)$ are very similar in both the communities and
they peak at nearly the same time. However, if $N_a=N_b$ and
$\lambda_a \neq \lambda_b$ while $\lambda_{ab}=\lambda_{ba},$ then
even a small amount of interactions matter as can be seen in the
significant shift in both $T_{\max}$ and $I_{\max}$ for community $a$
which has a lower $\lambda.$ This is seen in
Fig.~\ref{plot:SIR-NC-community-lambda-asymm}. Finally, in
Fig.~\ref{plot:SIR-NC-community-both-asymm} we show $I_{a}(t)$ and
$I_b(t)$ when there is asymmetry in the two communities both in terms
of the size and the $\lambda.$ It is more interesting to see
$I_{a}(t)+I_b(t)$ for this last case, as shown in
Fig.~\ref{plot:SIR-NC-community-total}. Observe that if the
interaction is small, the total $I(t)$ has a comparatively `fatter'
middle, possibly with a double hump. This disappears when the
interactions increase.

\subsection{Introducing births and `normal' deaths}
\label{sec:birth-deaths}

The models for long duration epidemics, like the way Covid-19 is
expected to be, should include natural births and deaths. The standard
SIR model has been extended to include natural births and deaths.
However, almost all the models continue to make the population
conservation assumption---the natural death rate is equal to the birth
rate and the total population is conserved. The SIR-NC model naturally
allows for unconstrained birth and death rates. Thus in this section
we describe the SIR-NC model that includes births and natural deaths
with no constraints on the rates. However, if we introduce some restrictions
on the parameters, not unlike those that are derived from the standard
SIR model, we can provide additional insights to the properties
of the evolution of the epidemic. Here, we make the natural assumption
that all births are susceptible.

Consider the dynamics with a small birth rate and a small death rate
for `normal' deaths (as opposed to deaths due to the epidemic), given
by:
\begin{eqnarray}
  \dot{S}^\ep(t) &=& -\lambda\frac{S^\ep(t)I^\ep(t)}{N^\ep(t)} +
  \kappa S^\ep(t) + \upsilon_1I^\ep(t) +
  \upsilon_2R^\ep(t), \label{ep1} \\
  \dot{I}^\ep(t) &=& \lambda\frac{S^\ep(t)I^\ep(t)}{N^\ep(t)} - \gamma
  I^\ep(t) - \nu_1I^\ep(t), \label{ep2} \\
  \dot{R}^\ep(t) &=& \beta I^\ep(t) - \nu_2R^\ep(t), \label{ep3}
\end{eqnarray}
where $N^\ep(t) = S^\ep(t) + I^\ep(t) + R^\ep(t)$ and $\kappa,
\upsilon_i, \nu_i = O(\ep), i = 1,2,$ for some $0 < \ep << 1$. Note 
that we have now included the recovered, i.e., $R^\ep(t)$ in the 
definition of the normalizing factor $N^\ep(t)$ in view of the fact 
that it directly affects the evolution of $S^\ep(\cdot)$ given by 
(\ref{ep1}) and can therefore no longer be viewed as having `dropped 
out'. The $\upsilon_i$'s are birth rates and the $\nu_i$'s the rates 
of normal deaths for the infected and recovered populations 
respectively, and are positive. The parameter $\kappa$ is the net rate
of birth minus normal deaths for the susceptible population, and can
be either positive or negative.

\begin{figure}[tbp]
  \begin{subfigure}{0.31\textwidth}
    \centering
    \includegraphics[width=\textwidth]{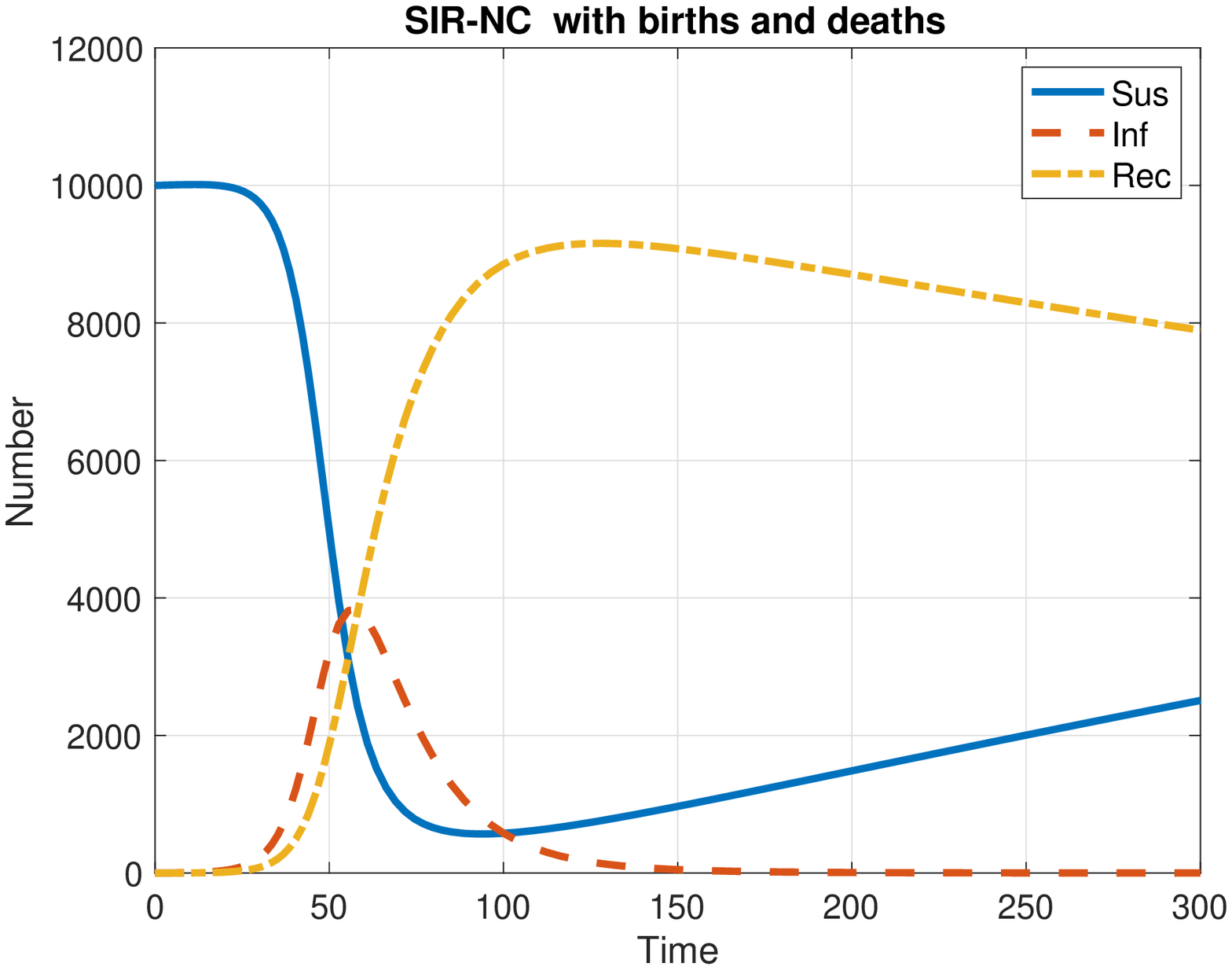}
    \caption{More natural births. $\kappa=0.0002,$
      $v_1=v_2=0.0012.$}
    \label{plot:bd-xs-300}
  \end{subfigure}
  \hfill
  \begin{subfigure}{0.31\textwidth}
    \centering
    \includegraphics[width=\textwidth]{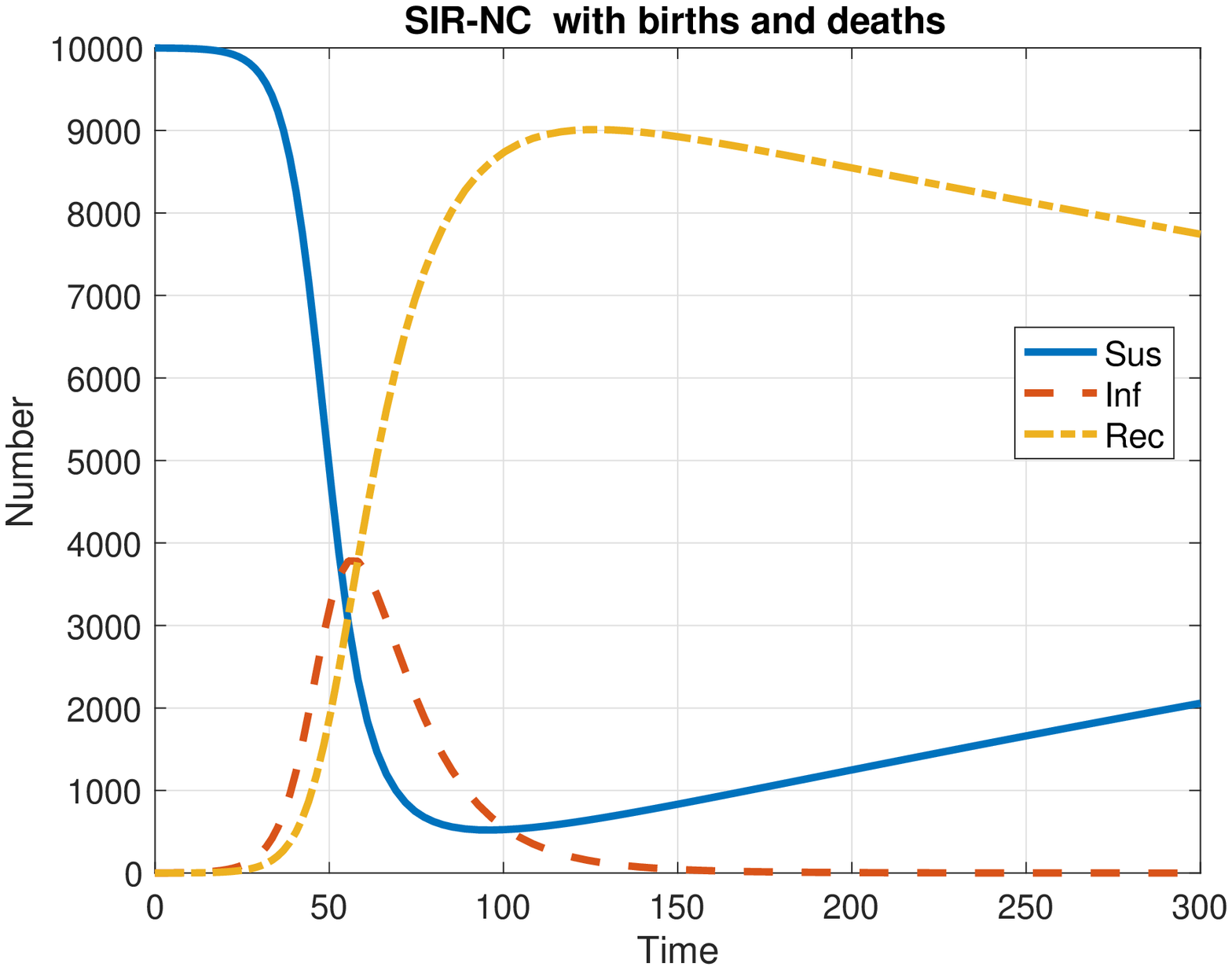}
    \caption{Balanced natural births and deaths. $\kappa=0,$
      $v_1=v_2=0.001.$ }
    \label{plot:bd-bal-300}
  \end{subfigure}
  \hfill 
  \begin{subfigure}{0.31\textwidth}
    \centering
    \includegraphics[width=\textwidth]{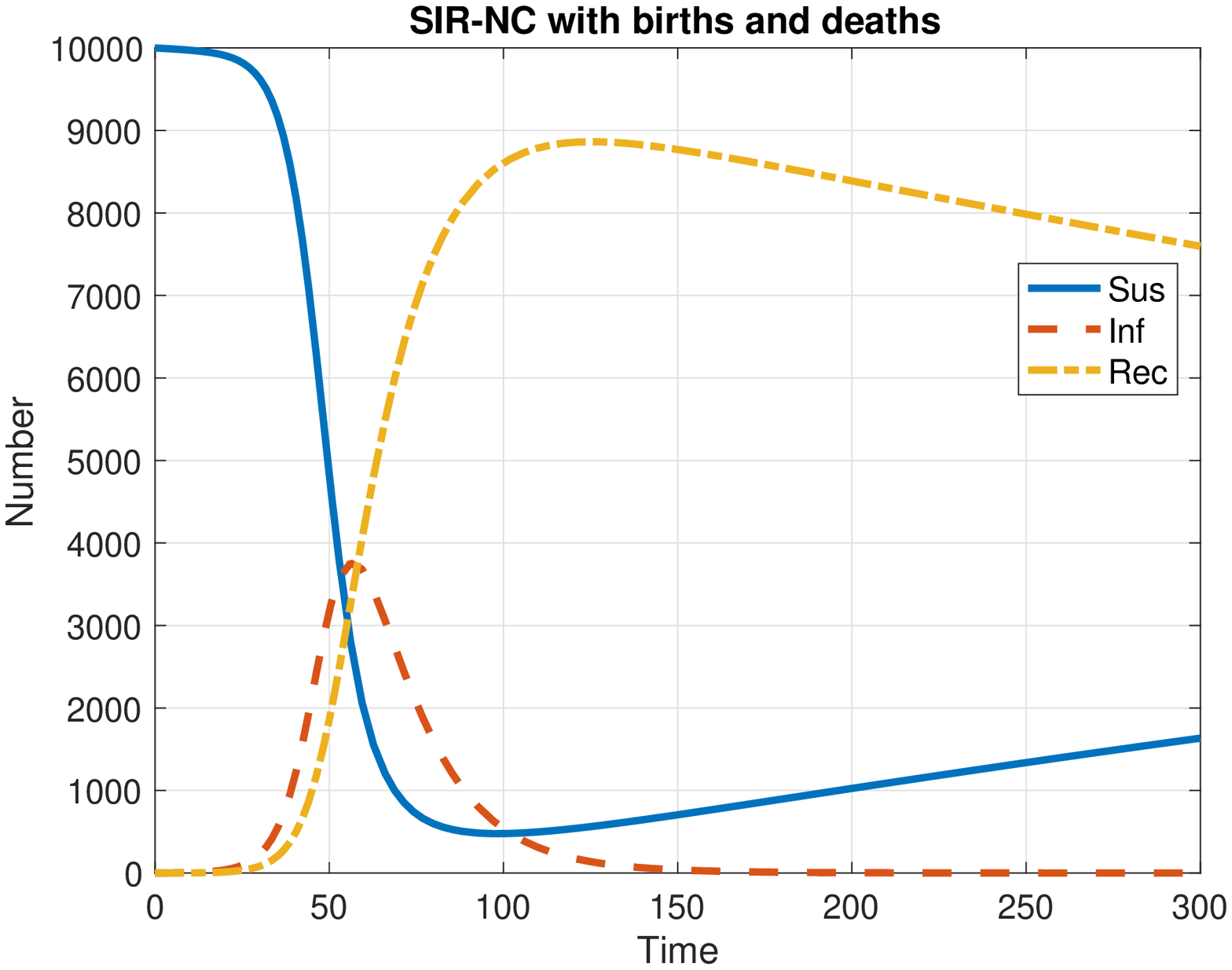}
    \caption{More natural deaths. $\kappa=-0.0002,$
      $v_1=v_2=0.0008.$}
    \label{plot:bd-less-300}
  \end{subfigure}

  \begin{subfigure}{0.31\textwidth}
    \centering
    \includegraphics[width=\textwidth]{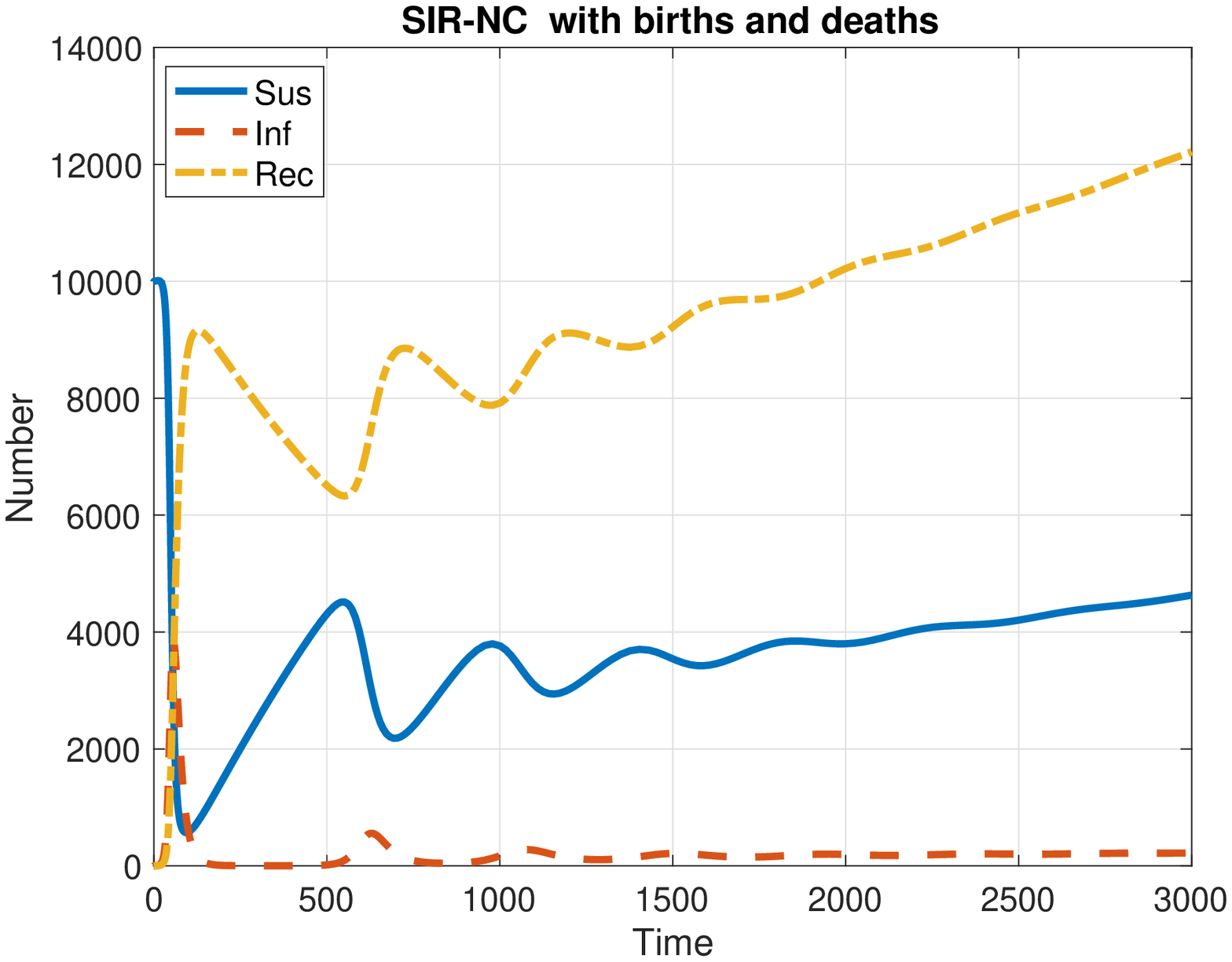}
    \caption{More natural births. $\kappa=0.0002,$
      $v_1=v_2=0.0012.$}
    \label{plot:bd-xs-3000}
  \end{subfigure}
  \hfill 
  \begin{subfigure}{0.31\textwidth}
    \centering
    \includegraphics[width=\textwidth]{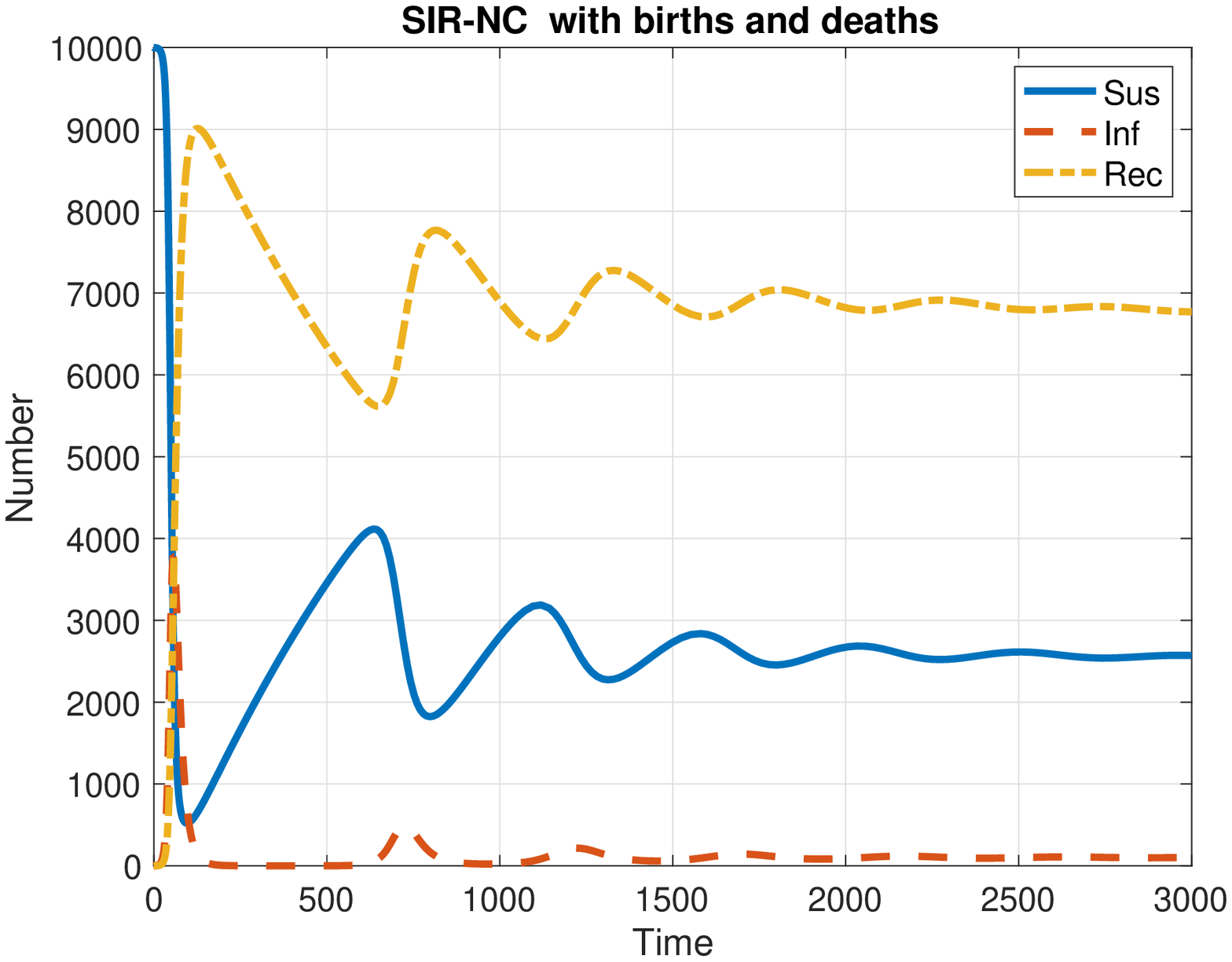}
    \caption{Balanced natural births and deaths.  $\kappa=0,$
      $v_1=v_2=0.001.$}
    \label{plot:bd-bal-3000}
  \end{subfigure}
  \hfill 
  \begin{subfigure}{0.31\textwidth}
    \centering
    \includegraphics[width=\textwidth]{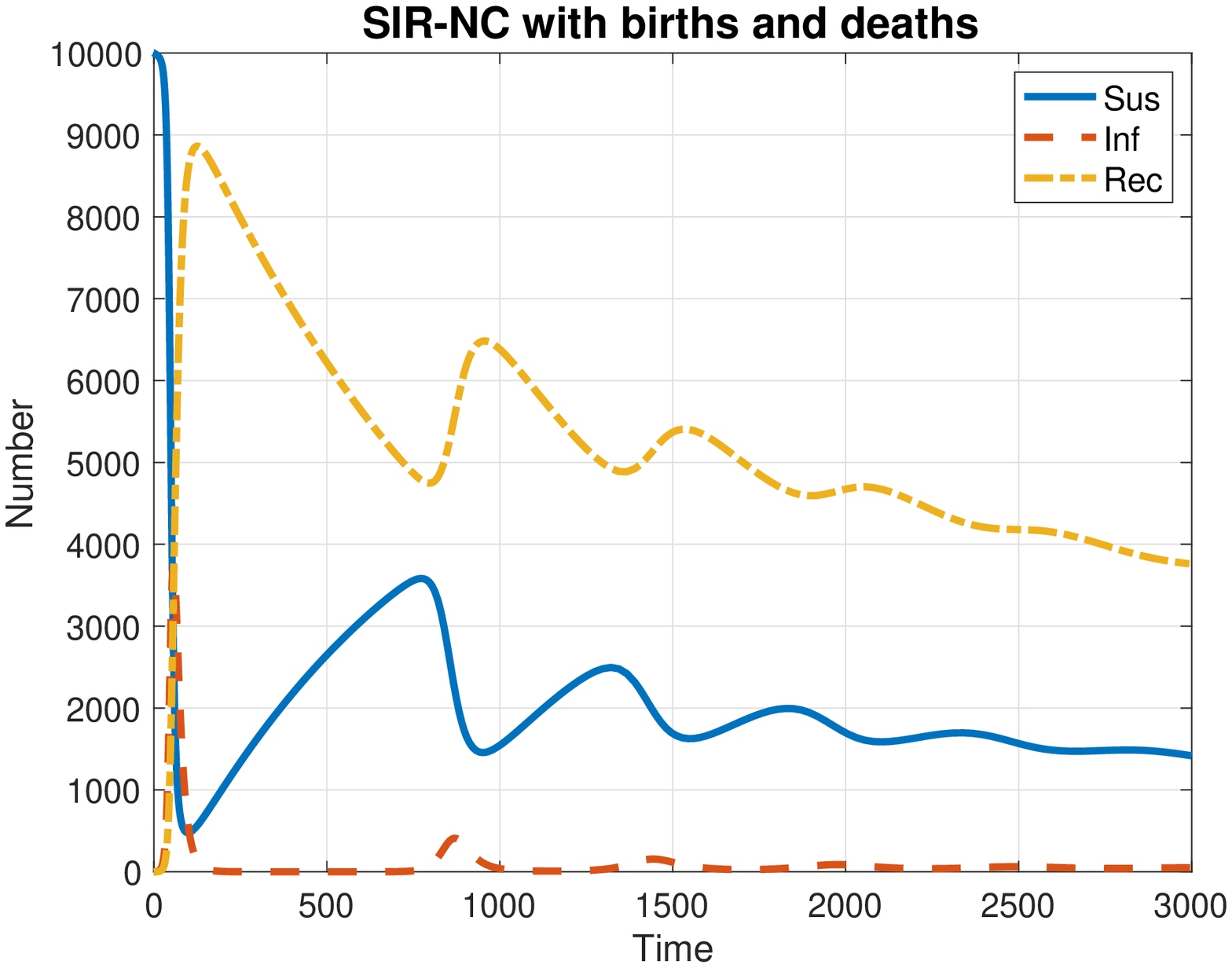}
    \caption{More natural deaths. $\kappa=-0.0002,$
      $v_1=v_2=0.0008.$}
    \label{plot:bd-less-3000}
  \end{subfigure}
  
  \caption{$S(t),$ $I(t),$ and $R(t)$ in the SIR-NC model that
    includes births and natural deaths. The top panel shows the short
    term behavior and the bottom panel shows the long term
    behavior. We have used $\lambda=1/4,$ $\gamma=1/15,$ $N=10,000,$
    $\nu_1=\nu_2=0.001,$ $\beta=0.98\gamma.$}
  \label{plot:birth-deaths}
\end{figure}

\begin{figure}
  \centering
  \includegraphics[width=0.6\textwidth]{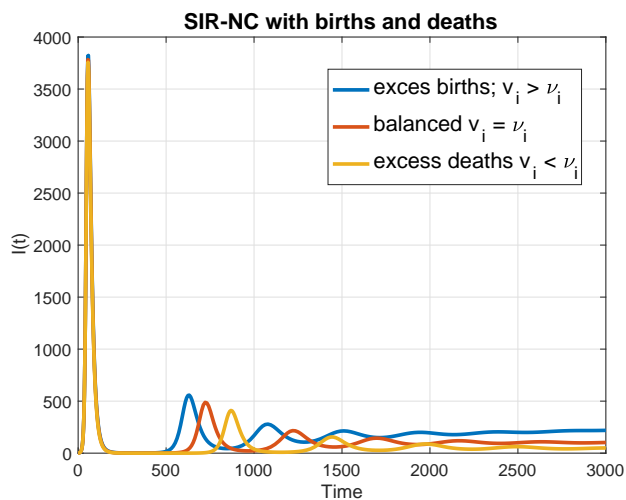}
  \caption{$I(t)$ for the three cases of Fig.~\ref{plot:birth-deaths}.}
  \label{plot:bd-I-of-t} 
\end{figure}

In Fig.~\ref{plot:birth-deaths} we show the trajectories of $S(t),$
$I(t),$ and $R(t)$ for this model for three different cases: birth
rate higher than natual death rate, balanced birth and natural death
rates, and natural death rates higher than birth rates. We show both
the short term and the long term trajectories. Observe that in all the
cases, there are significant oscillations in $S(t),$ $I(t),$ and
$R(t).$ The long term trajectory of $I(t)$ for the three cases are
shown in more detail in Fig.~\ref{plot:bd-I-of-t}. We observe that
$I(t)$ decays very slowly and $I(3000)$ has values 219, 102, and 50
for the three cases of, respectively, excess births, balanced, excess
deaths.

The preceding observations can be made more precise with the following
additional assumption: \\
\noindent \textbf{$(\dagger)$} $\kappa \leq 0, \ \upsilon_1 - \nu_1 <
\gamma, \ \upsilon_2 \leq \nu_2.$\\
This in particular includes the case of \\
\noindent \textbf{$(\dagger\dagger)$} $\kappa = 0, \upsilon_i = \nu_i$
for $i = 1,2$,\\
\noindent
corresponding to a balance of birth rate and the rate of natural
deaths community-wise. The former also includes the case of excess of
natural death rates over the birth rates. 

\begin{theorem}
  Under \textbf{$(\dagger)$}, $\hat{X}(t) :=
  (S^\ep(t),I^\ep(t),R^\ep(t))$ converges to $(0,0,0)$  for almost all initial
  conditions. 
\end{theorem}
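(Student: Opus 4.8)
The plan is to use the total population $N^\ep := S^\ep + I^\ep + R^\ep$ as a Lyapunov function and then bootstrap one coordinate at a time. First I would add \eqref{ep1}--\eqref{ep3}: the infection terms cancel, leaving $\dot N^\ep = \kappa S^\ep + (\upsilon_1+\beta-\gamma-\nu_1)I^\ep + (\upsilon_2-\nu_2)R^\ep$. Under $(\dagger)$ the coefficients of $S^\ep$ and $R^\ep$ are $\le 0$, and the coefficient of $I^\ep$ equals $(\upsilon_1-\nu_1)-(\gamma-\beta)$, which is $\le 0$ because the demographic rates are $O(\ep)$ whereas $\gamma-\beta$ is a fixed positive constant (and it is automatically $\le 0$ under $(\dagger\dagger)$, where $\upsilon_1=\nu_1$). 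Hence $\dot N^\ep\le 0$, so $N^\ep(t)\downarrow N_\infty$ for some $N_\infty\ge 0$; in particular the trajectory stays bounded (hence is global), and with $c:=\gamma+\nu_1-\beta-\upsilon_1>0$ one gets $c\int_0^\infty I^\ep(s)\,ds \le \int_0^\infty\bigl(-\dot N^\ep(s)\bigr)\,ds = N^\ep(0)-N_\infty<\infty$.

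Second, I would upgrade integrability to decay: $I^\ep$ is bounded and, by \eqref{ep2}, so is $\dot I^\ep$, hence $I^\ep$ is uniformly continuous, and a nonnegative uniformly continuous function with finite integral tends to $0$ (Barbalat), so $I^\ep(t)\to 0$. Feeding this into $\dot R^\ep=\beta I^\ep-\nu_2 R^\ep$ with $\nu_2>0$ and applying the variation-of-constants formula gives $R^\ep(t)\to 0$, and therefore $S^\ep(t)=N^\ep(t)-I^\ep(t)-R^\ep(t)\to N_\infty$. It remains to show that $N_\infty=0$ for almost every initial condition, which is equivalent to the claim since $0\le S^\ep,I^\ep,R^\ep\le N^\ep$.

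Third, a case split on $\kappa$. If $\kappa<0$ this is immediate: $\dot S^\ep\le \kappa S^\ep+\upsilon_1 I^\ep+\upsilon_2 R^\ep$ with the last two terms vanishing forces $S^\ep\to 0$, so $N_\infty=0$ for \emph{every} initial condition. The delicate case is $\kappa=0$ (which contains $(\dagger\dagger)$): then $\{I=0\}$ is forward invariant and, after $R$ decays, reduces to the line of equilibria $\{(s,0,0)\}$, so convergence to the origin cannot hold from every initial condition. Here I argue by contradiction. Suppose $N_\infty>0$; then $S^\ep(t)/N^\ep(t)\to 1$, so, using the standing epidemic hypothesis $\lambda>\gamma$ (whence $\lambda>\gamma+\nu_1$ for small $\ep$), there are $\eta>0$ and $T$ with $\dot I^\ep(t)=I^\ep(t)\bigl(\lambda S^\ep(t)/N^\ep(t)-\gamma-\nu_1\bigr)\ge \eta I^\ep(t)$ for $t\ge T$, hence $I^\ep(t)\ge I^\ep(T)e^{\eta(t-T)}$. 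This contradicts $I^\ep(t)\to 0$ unless $I^\ep(T)=0$, i.e.\ (by the multiplicative form of the $I^\ep$-equation together with uniqueness) unless $I^\ep(0)=0$. Thus $N_\infty=0$ whenever $I^\ep(0)>0$, and $\{I^\ep(0)=0\}$ is a measure-zero hyperplane of the state space, which proves the theorem.

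The step I expect to be the real obstacle is this last one in the $\kappa=0$ regime: the Lyapunov/LaSalle part only pins the $\omega$-limit set to the equilibrium line $\{(s,0,0)\}$, and excluding $s>0$ for almost all data needs the ``re-ignition'' observation that any surviving susceptible pool with $S^\ep/N^\ep\to 1$ would restart exponential growth of $I^\ep$. I would also flag that, unlike the interacting-communities model, \eqref{ep1}--\eqref{ep3} is not a cooperative system ($\partial\dot S^\ep/\partial I^\ep<0$ and $\partial\dot I^\ep/\partial R^\ep<0$, together with $\partial\dot I^\ep/\partial S^\ep>0$ and $\partial\dot S^\ep/\partial R^\ep>0$, cannot be made simultaneously nonnegative by any diagonal sign change), so the monotone-systems machinery is unavailable and the Lyapunov argument above appears to be the natural route.
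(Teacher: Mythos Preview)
Your proof is correct and follows the same overall strategy as the paper: use $N^\ep$ as a decreasing Lyapunov function to show the trajectory is bounded and to pin the limit set to $\{(s,0,0)\}$, then rule out $s>0$ by instability in the $I$-direction. The paper's version is much terser---it argues directly on the $\omega$-limit set (obtaining $I'\equiv 0$, hence $R'\equiv 0$, hence $S'\equiv N$) rather than via Barbalat and variation of constants, and disposes of the case $N>0$ in one line by declaring that $(N,0,0)$ is an unstable equilibrium so a small perturbation with $I>0$ moves the trajectory away. Your quantitative ``re-ignition'' argument (eventually $\dot I^\ep\ge \eta I^\ep$, forcing $I^\ep(T)=0$ and hence $I^\ep(0)=0$) is precisely what is needed to make that last step rigorous, since instability of an equilibrium does not by itself preclude it from being the $\omega$-limit of a particular trajectory; your version also makes explicit the reliance on $\lambda>\gamma$ and on the $O(\ep)$ size of the demographic rates that the paper's one-line bound $\dot N^\ep\le -(\gamma-\beta)I^\ep$ uses implicitly.
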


\begin{proof} 

  Summing up the equations (\ref{ep1})-(\ref{ep3}), we have
  \begin{displaymath}
    \dot{N}^{\ep}(t) \leq -(\gamma - \beta)I^\ep(t) < 0,
  \end{displaymath}  
  since $I^\ep(0) \Longrightarrow I^\ep(t) > 0 \ \forall \ t >
  0$. Then $\N^\ep(t) \downarrow N \geq 0$. If $N = 0$, there is
  nothing to prove, so suppose $N > 0$. Then every limiting trajectory
  $(S'(\cdot),I'(\cdot),R'(\cdot))$ of (\ref{ep1})-(\ref{ep3}) in the
  $\omega$-limit set $\Omega$ of (\ref{ep1})-(\ref{ep3}) must satisfy
  $I'(\cdot) \equiv 0$. But then (\ref{ep3}) forces $R'(\cdot) \equiv
  0$, so that $S'(\cdot) \equiv N$. But $(0,0,N)$ is an unstable
  equilibrium for $N > 0$, since a small perturbation that renders
  $I'(0) > 0$ will make the trajectory move away from it. The claim
  follows.
\end{proof}

Therefore a necessary condition for non-extinction is that at least
oneof the  sub-communities should have a non-zero value for rate of birth minus
the rate of normal deaths, i.e., $\kappa>0,$ or $v_1 > \nu_1,$
or $v_2 > \nu_2.$ The most interesting aspect of this analysis is the
case \textbf{$(\dagger\dagger)$}, which subsumes the conditions under
which the classical SIR model is derived. Recall that in the classical SIR model,
non-extinction is possible. For SIR-NC, however, even a small death rate due to
the epidemic leads to extinction. 

More generally, we can say the following.

\begin{theorem}
  The dynamics (\ref{ep1})-(\ref{ep3}) either,
  \begin{enumerate}
  \item  converges to $(0,0,0)$ for generic initial conditions, or,
  \item does not equilibrate for generic parameter values. 
  \end{enumerate}
\end{theorem}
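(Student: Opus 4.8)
(I suppress the $\ep$-superscripts, writing $(S,I,R)$ and $N=S+I+R$.) The plan hinges on one structural fact: the right-hand side of (\ref{ep1})--(\ref{ep3}) is positively homogeneous of degree one --- the interaction term $\lambda SI/N$ and all the linear birth/death terms scale the same way under $(S,I,R)\mapsto\alpha(S,I,R)$, $\alpha>0$ --- so the flow commutes with these dilations. I would therefore change to the ``angular'' variables $p=(x,y,z):=(S,I,R)/N$ on the $2$-simplex $\Delta:=\{x,y,z\ge0,\ x+y+z=1\}$ together with the scalar $N$. Just as in Section~\ref{sec:Thm1-proof}, $p$ then satisfies a self-contained planar vector field $\dot p=F(p)$ on $\Delta$, while $\dot N/N=g(p)$ with $g(x,y,z)=\kappa x+(\upsilon_1+\beta-\gamma-\nu_1)y+(\upsilon_2-\nu_2)z$. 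The positive orthant, hence $\Delta$, is forward invariant (at $S=0$, $\dot S=\upsilon_1 I+\upsilon_2 R\ge0$; at $R=0$, $\dot R=\beta I\ge0$; $\{I=0\}$ is invariant), so $F$ is a genuine flow on a topological disc, and since $g$ is bounded on the compact set $\Delta$ no trajectory escapes in finite time.

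Next I would classify the equilibria. A state $(S,I,R)\neq(0,0,0)$ is a rest point of (\ref{ep1})--(\ref{ep3}) if and only if its projection $p^*\in\Delta$ satisfies $F(p^*)=0$ \emph{and} $g(p^*)=0$ (the latter so that $N$ is stationary on the ray through $p^*$). Solving $F(p)=0$: if the infected fraction obeys $y^*>0$, then (\ref{ep2}) forces $x^*=(\gamma+\nu_1)/\lambda$ (possible only when $\gamma+\nu_1<\lambda$) and (\ref{ep3}) forces $z^*=(\beta/\nu_2)y^*$, after which $g(p^*)=0$ together with (\ref{ep1}) collapses to one scalar relation among the parameters, of the shape
\[
  \kappa\,\frac{(\gamma+\nu_1)\big(1+\beta/\nu_2\big)}{\lambda-(\gamma+\nu_1)}\;=\;\gamma+\nu_1-\upsilon_1-\frac{\upsilon_2\beta}{\nu_2};
\]
if instead $y^*=0$, then (\ref{ep3}) gives $z^*=0$ and (\ref{ep1}) gives $\kappa S=0$, forcing $S=0$ (the origin) unless $\kappa=0$. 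Hence, off the codimension-one locus carved out by the displayed relation together with $\{\kappa=0\}$, the origin is the \emph{unique} equilibrium of (\ref{ep1})--(\ref{ep3}) in the invariant orthant.

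Granting this, the dichotomy is nearly formal. Fix parameters in the generic (open, dense, full-measure) complement of that exceptional locus, so $(0,0,0)$ is the only equilibrium. Then for any such parameters, either a full-measure set of initial conditions has $\omega$-limit set $\{(0,0,0)\}$ --- which is alternative~(1) --- or some positive-measure set of initial data gives trajectories that do not converge to the origin, and since there is no other equilibrium these never settle to any equilibrium, i.e.\ the dynamics does not equilibrate --- alternative~(2). Alternative~(1) is realized on a nonempty open parameter set (a strict form of $(\dagger)$, by the theorem established just above), so on the remainder of the generic parameter set alternative~(2) must hold; in particular this is the regime of the sustained oscillations seen numerically in Fig.~\ref{plot:birth-deaths}.

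The only genuinely non-routine step is the genericity claim: that the coincidence ``$F$ has a zero on the segment $\{g=0\}\cap\Delta$'' is non-generic in the parameters, rather than something forced by $F$ and $g$ co-varying. The clean route is a transversality/Sard argument for the map $\Psi:(p,\theta)\mapsto(F(p),g(p))$ on $\Delta\times\Theta$, with $\Theta$ the parameter space: one checks that $0$ is a regular value of $\Psi$ --- at the $p$-zeros $D_pF$ is generically already invertible, and varying the demographic rates $\kappa,\upsilon_i,\nu_i$ supplies the missing $g$-direction --- so $\Psi^{-1}(0)$ fibres over $\Theta$ with almost every fibre empty. One must additionally absorb the degenerate sub-cases (zeros of $F$ on $\partial\Delta$, non-hyperbolic zeros, and the locus $\kappa=0$ where the entire $S$-axis consists of equilibria) into the same exceptional, positive-codimension set. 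That verification is where the actual work lies.
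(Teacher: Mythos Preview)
Your argument is correct and its heart coincides with the paper's: compute the nonzero equilibria directly, observe that they exist only when a single scalar relation among the parameters holds, and then read off the dichotomy. The paper does this in three lines in the original coordinates: from $\dot R=0$ one gets $r=(\beta/\nu_2)i$; equating the two expressions for $\lambda si$ coming from $\dot S=0$ and $\dot I=0$ yields $\kappa s=\zeta i$ with $\zeta:=\gamma+\nu_1-\upsilon_1-\upsilon_2\beta/\nu_2$; and substituting back shows that a nonzero equilibrium \emph{fixes $\lambda$} in terms of the remaining parameters. That is already your displayed relation, rearranged.

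Two remarks on the differences. First, your passage to the simplex variables via the degree-one homogeneity is a pleasant structural observation and makes the ``ray of equilibria'' picture transparent, but it is not needed for the argument: the paper's direct elimination in $(s,i,r)$ is shorter and lands on exactly the same constraint. Second, the transversality/Sard apparatus you propose at the end is overkill. Once you have written down the explicit relation
\[
\kappa\,\frac{(\gamma+\nu_1)\bigl(1+\beta/\nu_2\bigr)}{\lambda-(\gamma+\nu_1)}=\gamma+\nu_1-\upsilon_1-\frac{\upsilon_2\beta}{\nu_2},
\]
you have exhibited the exceptional parameter set as the zero locus of a nontrivial rational function of the parameters, hence of positive codimension; there is nothing more to check, and certainly no need to invoke Sard. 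The paper simply says ``this fixes the value of $\lambda$ in terms of the other parameters; thus for all but one value of $\lambda$, the dynamics does not equilibrate,'' which is the whole genericity argument. Your careful handling of the boundary cases ($y^*=0$, $\kappa=0$) is, if anything, more complete than the paper's, but the perceived ``non-routine step'' is not actually there.
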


\begin{proof}

  If $(s, i, r)$ is an equilibrium of the above system, we perforce
  have $r = \frac{\beta}{\nu_2}i$ by setting the right hand side of
  (\ref{ep3}) to zero and therefore the equilibrium equations obtained
  by setting the right hand side of (\ref{ep1})-(\ref{ep2}) equal to
  zero reduce to
  \begin{displaymath}
    \lambda si = \Bigg(s + \left(1 +
    \frac{\beta}{\nu_2}\right)i\Bigg)\Bigg(\kappa s + \left(\upsilon_1
    +\frac{\upsilon_2\beta}{\nu_2}\right)i\Bigg) = \left(s +\Bigg(1 +
    \frac{\beta}{\nu_2}\right)i\Bigg)\Bigg((\gamma + \nu_1)i\Bigg).
  \end{displaymath}
  Then for any non-zero equilibrium, $i > 0$ and $s = \zeta i/\kappa$
  where $\zeta := \gamma + \nu_1 - \upsilon_1 -
  \frac{\upsilon_2\beta}{\nu_2}$.  This fixes the value of $\lambda$
  in terms of the other parameters. Thus for all but one value of
  $\lambda$, the dynamics does not equilibrate.
\end{proof}


\section{Optimal Control of the SIR-NC Epidemic}
\label{sec:SIR-NC-Control}

\subsection{The control problem}
\label{sec:control-intro}
We now discuss another key objective of this paper---to define
associated control problems. The following objectives are of obvious
interest to the controller.
\begin{itemize}
\item Increasing $T_{\max}$: this allows the healthcare system to
  prepare better to treat the infected population, especially those
  that may develop severe symptoms.
\item Decreasing $I_{\max}$: This reduces the preparation cost.
\item Decreasing $T_e$, the time to end the epidemic: This would mean
  that the system can get back to normalcy quicker.
\item Achieve herd immunity by $T_0.$
\end{itemize}
In addition one could also define more detailed objectives, e.g.,
define objectives similar to the preceding for specific communities
that may or may not interact. 

Two parameters are available for control; $\lambda :=$ the spreading
rate and $\gamma :=$ the removal rate. Recall that in the original SIR
model, $\lambda$ is the product of $\mu :=$ the rate at which people
meet (or more accurately, are in physical proximity), and $p :=$ the
probability of the proximity resulting in an infection. In SIR-NC, we
have separated the relative time scales of the two processes, but the
interpretation remains. While $p$ is more likely to be a feature of
the infection and the community, $\mu$ can be controlled through
measures such as imposing social distancing and varying degrees of
lockdown. Hence we let $\lambda$ take finitely many discrete
levels. Further, as we have argued earlier, the recovered population
is viewed as removal. In practice, increasing the removal rate is
effected by identifying the infected early, i.e., by increased testing
and quarantining of those that test positive. We assume that the
testing capacity can be continuously varied and thus allow $\gamma
\in$ a suitable interval of $\Re^{+}.$

Specifically, we assume that $\lambda \in L := \{\ell_1, \cdots ,
\ell_M\}$, i.e., takes discrete values, with switching cost $(\ell',
\ell'') \mapsto c(\ell', \ell'') > 0$ when $\ell' \neq \ell''$ and $=
0$ when $\ell' = \ell''$. It is customary and sensible to assume that
$c$ satisfies the `triangle inequality'
\begin{displaymath}
  c(\ell_i, \ell_k) < c(\ell_i, \ell_j) + c(\ell_j, \ell_k)
\end{displaymath}
for distinct $\ell_i, \ell_j, \ell_k \in L$, in order to avoid some
pathologies. On the other hand, $\gamma$ can be continuously
manipulated as a function of the current state. We first consider as
the state process $y(t) = \frac{I(t)}{N(t)} \in [0, 1]$ introduced
earlier. Later on we shall also consider the full state $(S(t),
I(t))$. We let $\gamma: [0, 1] \mapsto [0,\Gamma]$ for some $\Gamma
\geq 0$.

We consider a finite horizon control problem. There is a very good
motivation for doing so. For scenarios such as this, Model Predictive
Control (MPC) \cite{Mayne14} is often the paradigm of choice, where at
time $t$, one solves a finite horizon optimal control problem on time
horizon $[t, t + T]$ and uses the optimal control choice for time $t$
at time $t$. However, as time moves forward, to say $t' > t$, one
again solves a new finite horizon problem on $[t', t' + T]$ and the
process is repeated for each time instant. The basis of this procedure
is the solution of a finite horizon problem, so we have preferred it
over other alternatives.

\subsection{Continuous time control}
\label{sec:cont-control}
We begin with the continuous time formulation. Let $T > 0$ be the time
horizon under consideration. There is a per unit time running cost
$k(y(t), \lambda(t), \gamma(t)) \geq 0$ and a terminal cost $h(y(T))
\geq 0$. The objective is to minimize the cost
\begin{equation}
  \int_s^Tk(y(t), \lambda(t), \gamma(t))dt +
  \sum_{s \leq \tau_i \leq T}c(\lambda(\tau_i^-), \lambda(\tau_i)) +
  h(y(T)) \label{totalcost}
\end{equation}
for $s = 0$. Here $\{\tau_i\}$ are the times when $\lambda(t)$ makes a
discrete switch between two values in $L$, $\lambda(\tau_i^-),
\lambda(\tau_i)$ denoting resp., the value of $\lambda(\cdot)$ just
before the switch and just after the switch.  The value function then
will be a map
\begin{displaymath}
  V : (y', \ell, s) \in \mathcal{R}^+\times L \times \mathcal{R}^+
  \mapsto V(y', \ell, s) \in \mathcal{R}^+.
\end{displaymath}
defined as the infimum of (\ref{totalcost}) over all admissible
choices of $\lambda(t), \gamma(t), \tau_i , T \geq \tau_i; \tau_i,
t\geq s,$ with $y(s) = y', \lambda(s) = \ell_i$ .  For ease of
notation, we shall write $V(y, \ell_i, t)$ simply as $V^i(y,t)$ and
the corresponding running cost $k(y,\ell_i, \gamma)$ as $k^i(y,
\gamma)$. Since this is a mixed switching and continuous control
problem, the Hamilton-Jacobi system here is a system of
quasi-variational inequalities given by: for all $i$,
\begin{eqnarray}
  0 &\leq& \frac{\partial V^i}{\partial t}(y,t) + \min_{0 \leq u \leq
    \Gamma}\left((\ell_iy(1-y) - uy)\frac{\partial V^i}{\partial y}(y,t) +
  k^i(y,u)\right), \label{DPcont} \\
  0 &\leq& \min_{j \neq i}\left[c(\ell_i, \ell_j) + V^j(y,t)\right] -
  V^i(y,t),  \label{DPdiscr} \\
  0 &=& \left(\frac{\partial V^i}{\partial t}(y,t) + \min_{0 \leq u \leq
    \Gamma}\left((\ell_iy(1-y) - uy)\frac{\partial V^i}{\partial
    y}(y,t)\right) + k^i(y,u)\right)\times \nonumber\\
  && \left(\min_{j \neq i}\left[c(\ell_i, \ell_j) + V^j(y,t)\right] -
  V^i(y,t)\right), \label{product} \\
 &&  \ V^i(y,T) = h(y). \label{terminal}
\end{eqnarray}

The explanation is as follows. In absence of any switching of the
$\lambda$ parameter, (\ref{DPcont}) with equality would be the
standard Hamilton-Jacobi equation of dynamic programming for
continuous control. The same will be true now when you don't
switch. When it is better to switch, continuing with the continuous
control without switching will not be optimal, so inequality should
hold. This explains (\ref{DPcont}). Inequality (\ref{DPdiscr}) is in
similar spirit for the discrete control decision of switching:
equality holds when it is optimal to switch, inequality holds
otherwise. At any instant, it is either optimal to switch or to
continue with the optimal continuous control. This leads to
(\ref{product}). Finally, (\ref{terminal}) is the boundary condition
given by the terminal cost.

Given a solution of this system, one uses $\lambda(t) = \ell_i$ when
\begin{displaymath}
  V^i(y(t), t) < \min_{j \neq i}(c(\ell_i, \ell_j) + V^j(y(t), t),
\end{displaymath}
along with the continuous control $\gamma(t) :=$ a measurable
selection from the minimizers of
\begin{displaymath}
  u \mapsto \left((\ell_iy(t)(1-y(t)) - uy(t))\frac{\partial
    V^i}{\partial y}(y(t),t) + k^i(y(t),u)\right).
\end{displaymath}
Since we are in a finite horizon, a discretization can be solved by
simple backward recursion. 

Next we state an alternative
framework based on the actual state variables, i.e., the pair $(S(t), I(t))$. The
advantage of using $y(t)$ as state variable is that it is a bounded
scalar and therefore easier to handle. It also reflects the costs
associated with $I(t)$ somewhat faithfully. Nevertheless, it is not
the real thing and computational power permitting, it may be better to
stay with the actual variables. We shall use the same notation for the
cost functions and the value function as before. The quasi-variational
inequality then becomes
\begin{align}
  0 &\leq \frac{\partial V^i}{\partial t}(S,I,t) + \min_{0 \leq u
    \leq \Gamma}\left( \left(\frac{\ell_iSI}{S + I} -
  uI\right)\frac{\partial V^i}{\partial I}(S,I,t) + k^i(S,I,u)\right)
  \nonumber \\  
  & - \ \left(\frac{\ell_iSI}{S + I}\right)\frac{\partial
    V^i}{\partial S}, \label{DPcont2} \\  
  0 &\leq \ \min_{j \neq i}\left[c(\ell_i, \ell_j) + V^j(S,I,t)\right]
  - V^i(S,I,t), \label{DPdiscr2} \\  
  0 &=\ \Bigg(\frac{\partial V^i}{\partial t}(S,I,t) + \min_{0 \leq u
    \leq \Gamma}\left(\left(\frac{\ell_iSI}{S + I} -
  uI\right)\frac{\partial V^i}{\partial I}(S,I,t) + k^i(S,I,u)\right)
  \nonumber \\  
    & \ \ - \ \left(\frac{\ell_iSI}{S + I}\right)\frac{\partial
    V^i}{\partial S}\Bigg)\left(\min_{j \neq i}\left[c(\ell_i, \ell_j)
    + V^j(S,I,t)\right] - V^i(S,I,t)\right), \label{product2} \\ 
  & \ V^i(S,I,T) = h(S,I). \label{terminal2}
\end{align}

Comments
analogous to those following (\ref{DPcont})--(\ref{terminal}) also
apply here.

\subsection{Discrete time control problem}
\label{sec:discrete-control}
Unfortunately, the quasi-variational inequalities for first order
systems are difficult to analyze and are usually not well-posed in the
classical sense. For this reason, most work in this direction has been
in the framework of viscosity solutions
\cite{Dolcetta84,Dharmatti05,Zhang06}. We shall take recourse to a
simpler alternative, viz., to consider a discrete skeleton of the
problem, which leads to a simple backward recursion.

\NEW{We should mention here that there is already some work on pure
  impulse control of this model, i.e., without the continuous control
  \cite{Clancy99}, \cite{Piunovskiy19}. This is technically a little
  easier and does not run into some of the aforementioned technical
  issues.}

Let $a > 0$ be a small time-discretization step. A discretization of
the underlying dynamics is given by the Euler scheme for the original
continuous dynamics as:
\begin{eqnarray}
  S(t+1) &=& S(t) - a\left(\frac{\lambda(t)S(t)I(t)}{S(t) +
    I(t)}\right), \label{Sdiscr} \\
  I(t+1) &=& I(t) + aI(t)\left(\frac{\lambda(t)S(t)}{S(t) + I(t)} -
  \gamma(t)\right), \label{Idiscr}
\end{eqnarray}
for $t = 0, 1, 2, \cdots$. We again use the same notation as before
for cost functions and the value function. The objective is to
minimize
\begin{equation}
  \sum_{t=s}^{T-1}k(S(t), I(t), \lambda(t), \gamma(t)) +
  \sum_{s \leq \tau_i < T}c(\lambda(\tau_i^-), \lambda(\tau_i)) +
  h(S(T),I(T)) \label{totalcost2}  
\end{equation}
for $s = 0$. The value function $V^i(S', I', s)$ is then the infimum
of (\ref{totalcost2}) over all admissible $\lambda(t), \gamma(t),
\tau_i, T \geq t, \tau_i \geq s$, beginning at $\lambda(s) = \ell_i$
and $S(s) = S', I(s) = I'$. The quasi-variational inequalities can be
written along the lines of the foregoing, but for discrete systems,
they can be put in the form of a very convenient backward
recursion. Let $Z(t) = (S(t), I(t))$ and write the above discrete
dynamics (\ref{Sdiscr})-(\ref{Idiscr}) compactly as
\begin{equation} 
  Z(t+1) = F(Z(t), \lambda(t), \gamma(t)), \ t \geq 0. \label{dynamics}
\end{equation}
Then the discrete quasi-variational inequalities reduce to the simple
backward recursion of dynamic programming given by
\begin{equation}
  V^i(z,t) = \min_{j,u}\Big( c(\ell_i, \ell_j) + k^j(z, u, t) +
  V^j(F(z, \ell_j, u), t+1)\Big), \ 0 \leq t < T, \label{DPfinal}
\end{equation}
with terminal condition $V^i(z,T) = h(z)$.

\noindent \textbf{Remark} \eqref{totalcost2} can also be used for the
population conserving SIR model with an appropriate version of the
discrete dynamics for \eqref{dynamics}. We do not pursue this here.

If the objective is to minimize the peak over this interval, we
augment the dynamics (\ref{Sdiscr})-(\ref{Idiscr}) with an additional
state variable, viz., the running maximum $M(t) := \max_{0 \leq s \leq
  t}I(s)$ with its update equation
\begin{equation}
  M(t+1) = \max(M(t), I(t+1)). \label{maxeq}
\end{equation}
The new state now is $Z(t) = (S(t), I(t), M(t))$ with dynamics
\begin{displaymath}
  Z(t+1) = F(Z(t), \lambda(t), \gamma(t))
\end{displaymath}
for a suitably redefined $F$. The dynamic programming equation is the
same as before with $k^i \equiv 0 \ \forall i$ and $h((s,i,m)) = m$.

\subsubsection{Numerical Examples}
\label{sec:Control-example}

We now illustrate the preceding control problem via numerical
examples. In our computations, we minimize \eqref{totalcost2} subject
to \eqref{dynamics} with a time-discretization step of $a=1,$ treating
it as an optimization problem, and use a simple forward tree search to
solve it.

We consider a community with $N=10,000,$ natural $\lambda$
of $1/4,$ and natural $\gamma,$ designated as $\gamma_0,$ of 
$1/15.$ Recall from the example in Section~\ref{sec:SIR-NC-model}
that for this system $T_{\max}=55.4$ and $I_{\max}=4523.$ We assume
that the control objective at time $t$ would be to have the number of
infected at time $(t+T)$ to be a fraction $\alpha(t+T)$ of the value
in the uncontrolled system, which we will denote by $I_o(t+T).$ With
this view, we use the following cost function $h(\cdot)$
\begin{displaymath}
  h(S(t+T),I(t+T)) =  A_1 e^{a_1 (I(t+T)-\alpha I_o(t+T))}, 
\end{displaymath}
with $a_1$ $A_2$ to be constants. This function penalizes missing the
the target significantly more than the reward for exceeding it. In our
computations we have used $a_1=10,$ $A_1=100,$ and $T=3.$ To model an
aggressive control objective we use $\alpha(t)=0.1$ for all $t.$ To
model a potentially more modest objective, we use
\begin{displaymath}
  \alpha(t) = \begin{cases}
    1-\frac{0.9}{T_{\max}} t & \mbox{for $0 \leq t \leq 55$}\\
    0.1 & \mbox{for $t> T_{\max}$}
  \end{cases}
\end{displaymath}
This corresponds to slowly increasing the requirement reaching
$0.1I_{\max}$ at $T_{\max}.$ 

We assume $M=3,$ i.e., there are three contact rates among the members
of the community corresponding to three levels of lockdown. We 
use $L=\{1/4, 3/16, 1/8\}$ corresponding to, respectively, no
lockdown, a partial lockdown, and a total lockdown. The switching
costs are
\begin{displaymath}
  c = \begin{pmatrix}
    0    & 2a_2   & 3a_2\\
    0.1a_2 & 0    & 2a_2\\
    0.1a_2 & 0.1a_2 & 0
  \end{pmatrix}
  .
\end{displaymath}
The cost of switching to a lower $\lambda$ is significantly higher
than to switch to a higher $\lambda.$ We use $a_2=10,000$ in the
simulations. 

The running cost, $k(\cdot),$ will have the following additive
components.
\begin{itemize}
\item Cost due to the chosen level of lockdown; we will use $L_1=0,$
  $L_3=10L_2$ corresponding to the costs of the three levels of the
  lockdown. We use $L_2=10,000.$

\item Cost of testing and quarantining. This will be an increasing
  function of the rate of removal of the infected subpopulation in
  excess of the natural rate of $1/15$ and the number that are
  infected, $I(t).$ At the beginning of the epidemic, it may be hard
  to obtain the testing kits and to also define the protocol and will
  hence be very costly to achieve a high rate of removal. Eventually,
  the time dependent cost becomes negligible. This leads us to propose
  the following function for this component: $(A_2 + \frac{A_3}{t})
  I(t) \left( \gamma(t) - \gamma_0 \right).$ We use $A_2=A_3=100$ in
  the examples. Further, we discretise the feasible $\gamma(t)$ to 10
  levels in the interval $[1/15, 1/3]$

\item A fraction of the infected will be hospitalised. There may be
  other knock-on effects of prolonged illnesses due to the infection,
  less attention to other endemic diseases like tuberculosis,
  hypertension, and diabetes. Finally, if $I(t)$ is allowed to become
  too large, hospital space may become unavailabele. This suggests
  $A_4I(t)^{a_3}$ to be the form for this component. In our
  computations, we have used $A_4=10$ and consider $a_3$ in the range
  $(1,1.5).$ There is an interesting change in the behaviour of the
  cost functions and the control variable as $a_3$ is increased from
  1.0. We will see this later.
 
\end{itemize}

From the preceding discussion, we have
\begin{displaymath}
  k(S(t), I(t), \lambda(t), \gamma(t)) = L_{\lambda(t)} + \left(A_2 +
  \frac{A_3}{t} \right) I(t) \left( \gamma(t) - \gamma_0 \right) + A_4
  I(t)^{a_3}
\end{displaymath}

\begin{figure}[tbp]
  \begin{subfigure}{0.32\textwidth}
    \centering
    \includegraphics[width=\textwidth]{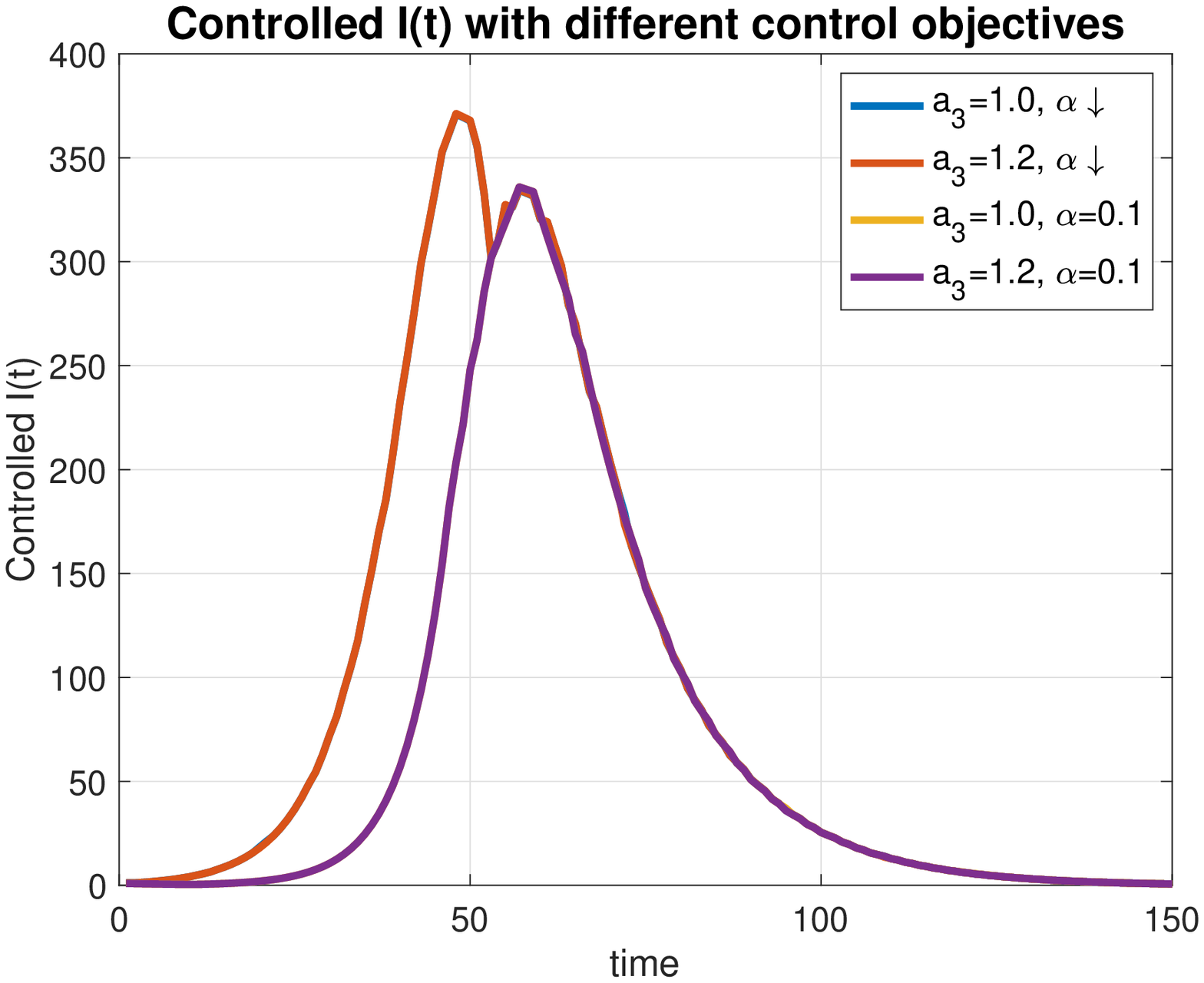}
    \caption{Controlled $I(t)$.}
  \end{subfigure}
  \begin{subfigure}{0.32\textwidth}
    \centering
    \includegraphics[width=\textwidth]{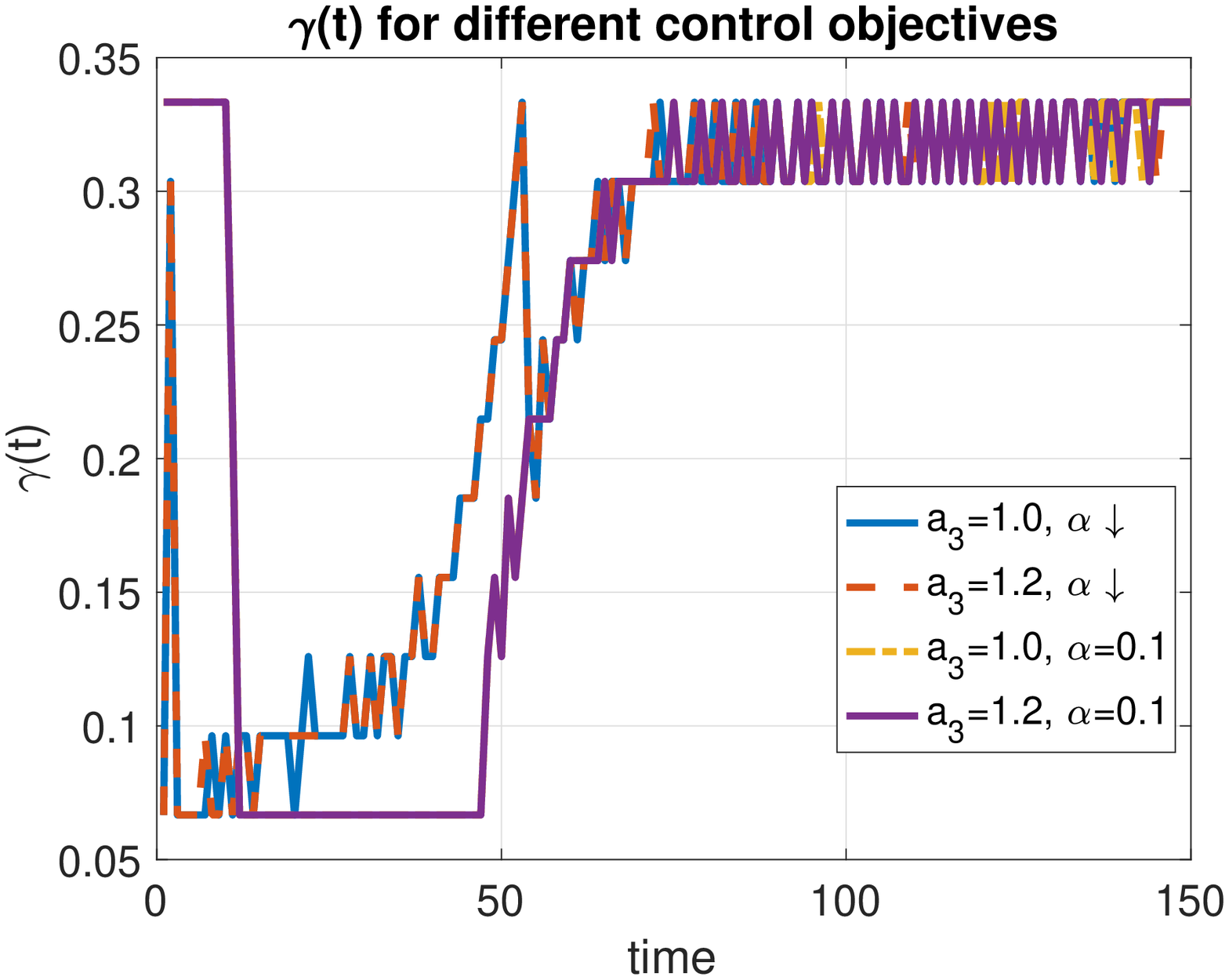}
    \caption{Controlled $\gamma(t)$.}
  \end{subfigure}
  \begin{subfigure}{0.32\textwidth}
    \centering
    \includegraphics[width=\textwidth]{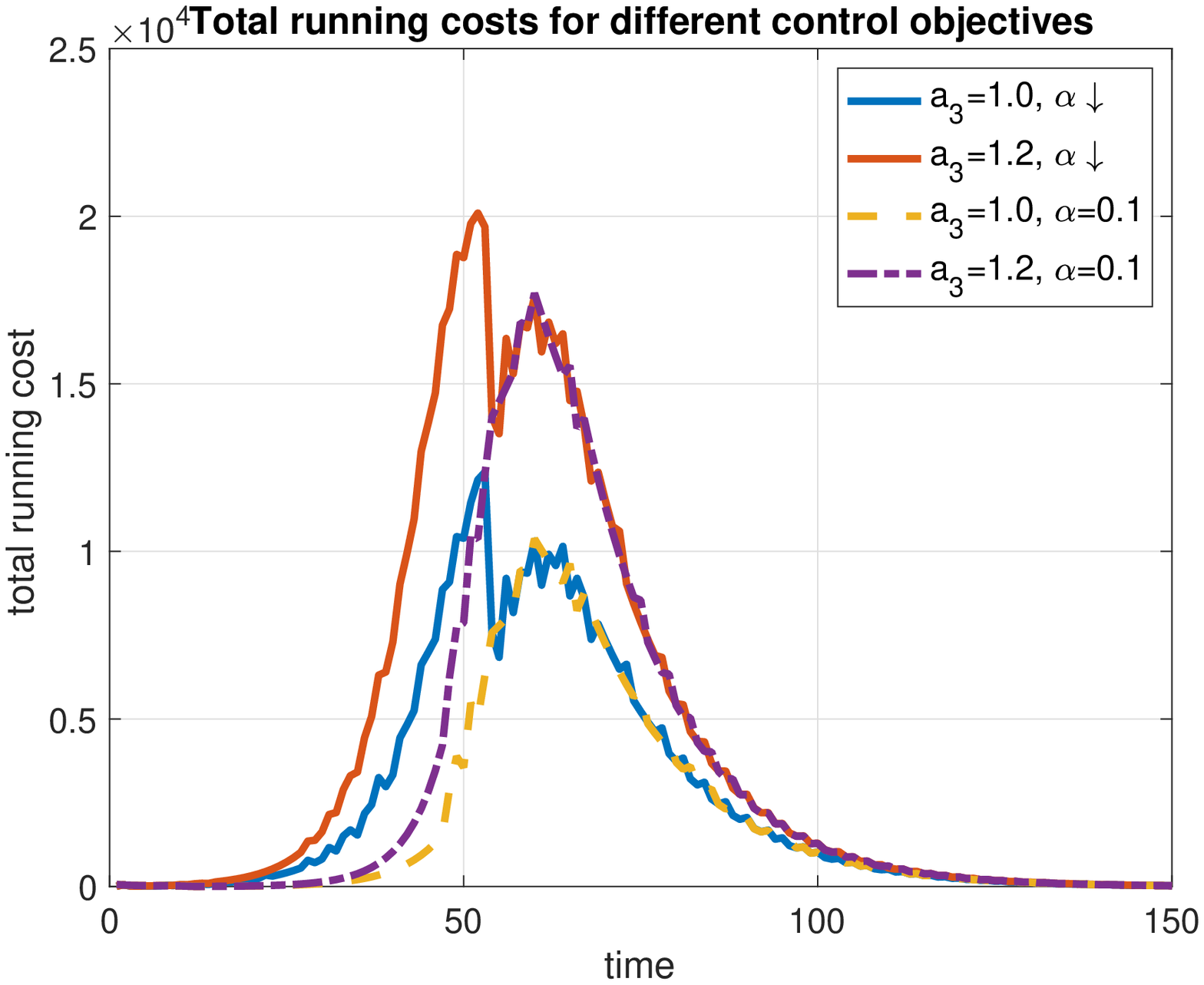}
    \caption{Sum of Switching and running cost in each time slot..}
  \end{subfigure}

  \caption{Controlled $I(t),$ $\gamma(t),$ and total running cost for
    $a_3 =1.0, 1.2,$ and $\alpha(t)$ decreasing form $1.0$ to $0.1$ in
    the period $(0, T_{\max})$ and $\alpha(t)=0.1.$}
  \label{plot:control1}
\end{figure}

\begin{figure}[tbp]
  \begin{subfigure}{0.32\textwidth}
    \centering
    \includegraphics[width=\textwidth]{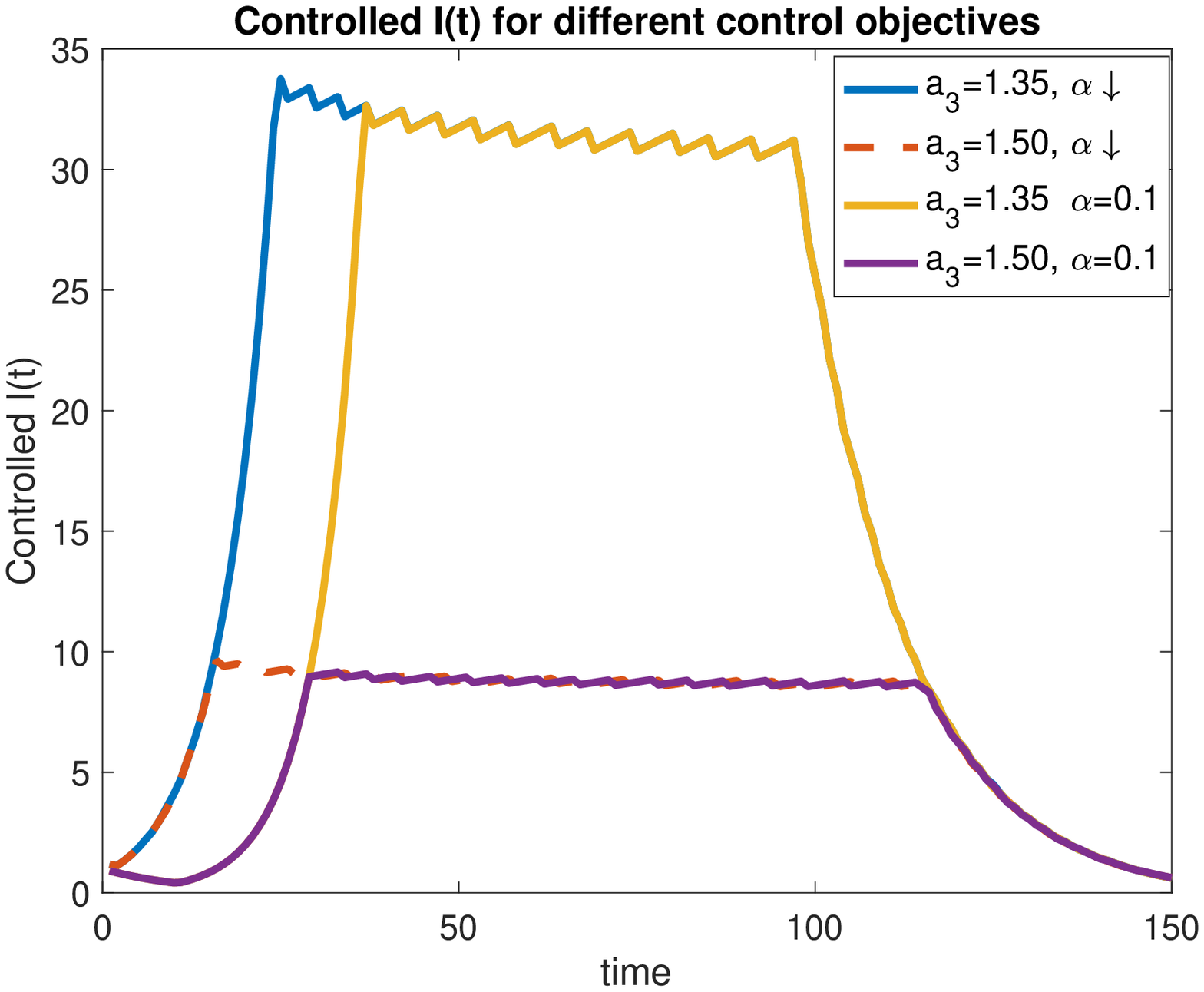}
    \caption{Controlled $I(t)$.}
  \end{subfigure}
  \begin{subfigure}{0.32\textwidth}
    \centering
    \includegraphics[width=\textwidth]{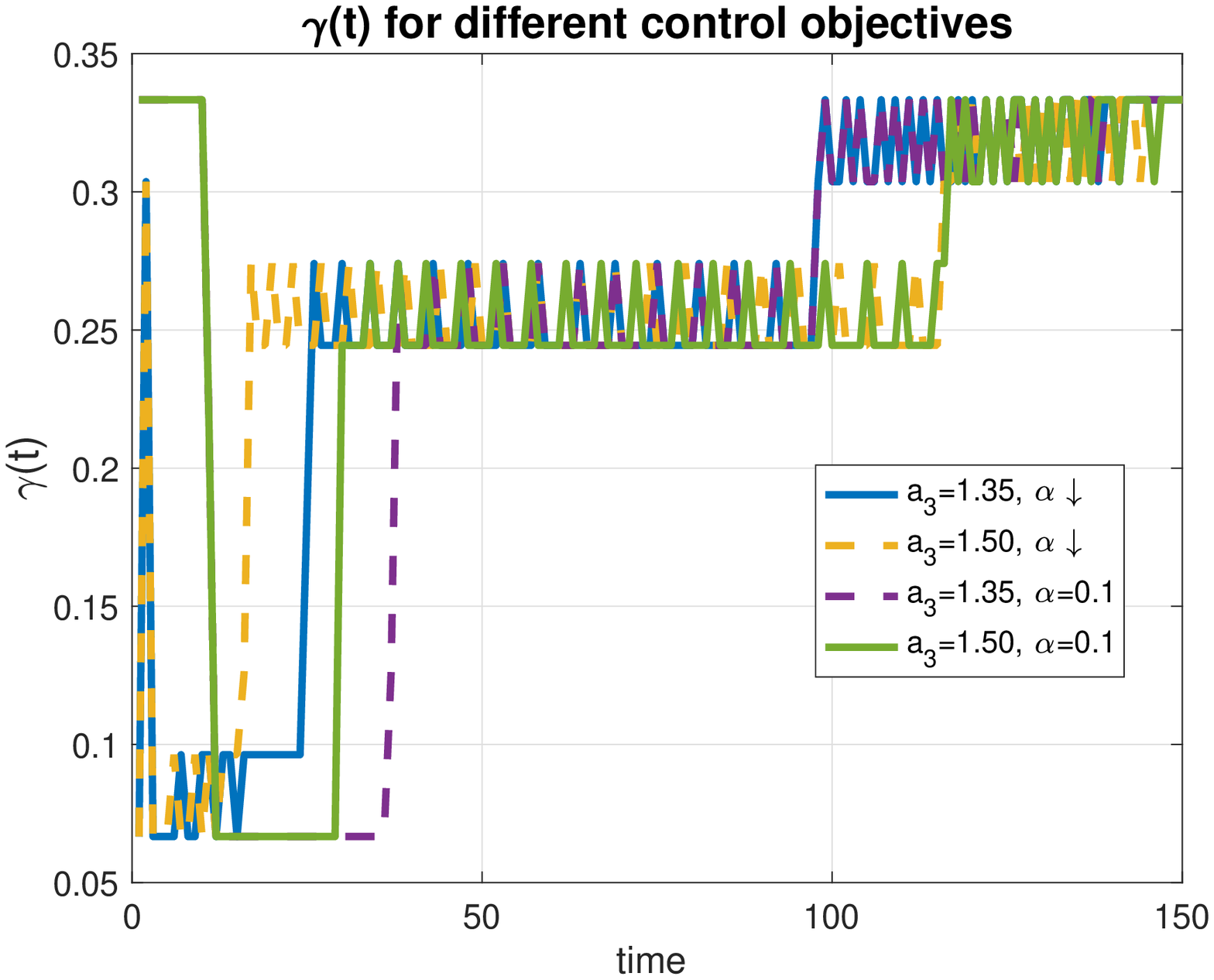}
    \caption{Controlled $\gamma(t)$.}
  \end{subfigure}
  \begin{subfigure}{0.32\textwidth}
    \centering
    \includegraphics[width=\textwidth]{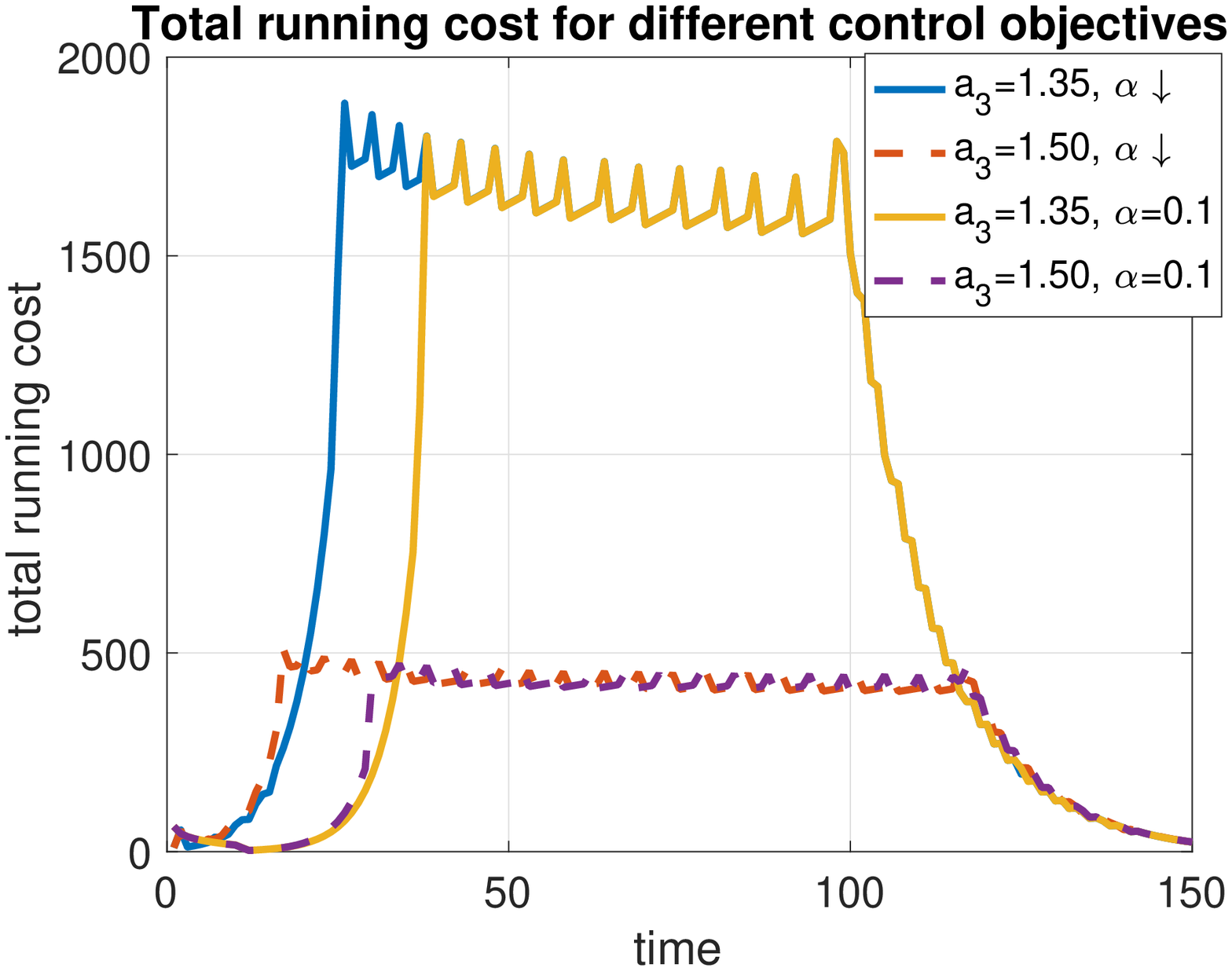}
    \caption{Sum of Switching and running cost in each time slot..}
  \end{subfigure}

  \caption{Controlled $I(t),$ $\gamma(t),$ and total running cost for
    $a_3 =1.35, 1.5,$ and $\alpha(t)$ decreasing form $1.0$ to $0.1$
    in the period $(0, T_{\max})$ and $\alpha(t)=0.1.$}
  \label{plot:control2}
\end{figure}

In Figs.~\ref{plot:control1} and ~\ref{plot:control2} we plot $I(t),$
$\gamma(t)$ and the running cost for different $a_3$ and $\alpha(t).$
There are several takeaways from these results. For the parameters
chosen, the switching costs and the running cost of reduced $\lambda$
was too high for the system to switch $\lambda,$ thus $\lambda(t)=1/4$
was the control for all the parameters. As we changed $a_3$ from $1.0$
to 1.2, for decreasing $\alpha(t),$ the change in both $I(t)$ and
$\gamma(t)$ was barely noticeable. This was the case even when
$\alpha(t)$ was held constant at $0.1.$ However as we changed $a_3$
from $1.2$ to $1.35$ the changes were significant both in shape and
magnitude of $I(t),$ $\gamma(t),$ and the total running cost. This can
be seen when comparing the plots in Figs.~\ref{plot:control1} and
~\ref{plot:control2}. A higher $a_3$ induces higher levels of control
and hence reduced $I(t)$ this in turn reduced the running
cost. Observe that there is an order of magnitude reduction in $I(t)$
and in the cost. Thus a higher `valuation' for infections induces a
higher level control which in turn reduces costs in the long run. 

\subsection{Multi-objective and multi-community control}
\label{sec:multi-objective}
As we have seen at the beginning of this section, there are several
competing criteria we may want to optimize and focusing on just one at
the expense of others may not be wise. We therefore propose here a
multi-objective control scheme inspired by \cite{Shah18}. We shall
assume that the state space $\Re^+ \times\Re^+$ is now suitably
discretized and denoted by $S$. Likewise, the dynamics
(\ref{dynamics}) is suitably replaced by a dynamics on $S$, where we
retain the same notation as (\ref{dynamics}) for sake of simplicity.
For example, the original $F$ could be replaced by $F$ followed by a
suitable vector quantization scheme. As already noted, some cost
criteria may require a suitable augmentation of the state space.

Consider $K > 1$ running cost functions $k^i_1(\cdot, \cdot, \cdot),
\cdots , k^i_K(\cdot, \cdot, \cdot))$, switching cost functions
$c_1(\cdot, \cdot), \cdots , c_K(\cdot, \cdot)$ and terminal cost
functions $h_1(\cdot), \cdots , h_K(\cdot)$. Let $C_1,$ $\cdots,$ $C_K
> 0$ denote prescribed numbers. Let $(i,z,t) \mapsto V_j^i(z,t), 1
\leq j \leq K,$ denote the value function corresponding to the $j$th
running cost function $k_j^\cdot(\cdot, \cdot, \cdot)$, switching cost
function $c_j(\cdot, \cdot)$, and terminal cost $h_j(\cdot)$.  Our
objective will be to ensure that for given initial condition $z = z_0$
of the state and the initial choice $\ell_{i_0}$ of the discrete
control variable,
\begin{equation}
  V_j^{i_0}(z_0,0) \leq C_j \ \forall j. \label{constraint}
\end{equation}
We assume that the set of admissible controls for which
(\ref{constraint}) holds is not empty. We scalarize this problem by
considering a single running cost function $(\ka_w^\cdot(\cdot, \cdot,
\cdot) := \sum_jw_jk^i_j(z,u,t)$, a single switching cost
$\ca_w(\cdot, \cdot) := \sum_jw_jc_j(\cdot, \cdot)$ and a single
terminal cost function $\ha_w(\cdot) := \sum_jw_jh_j(\cdot)$, where
$w_0 := [w(1), \cdots , w(K)]^T$ be a vector satisfying $w(j) > 0
\ \forall j$ and $\sum_jw(j) = 1$ (i.e., a probability vector).  The
algorithm then is as follows. For $n \geq 0$, do the following.

\begin{enumerate}

\item Perform the $K$ backward recursions
  \begin{equation}
    V^i_m(z,t) = \min_{j,u}\Big( c_m(\ell_i, \ell_j) + k_m^j(z, u,
    t) + V^j_m(F(z, \ell_j, u), t+1)\Big), \ 0 \leq t <
    T, \label{DPseparate}
  \end{equation}
  with terminal condition $V_{m,n}^i(z,T) = h_m(z)$, $1 \leq m \leq
  K$.

  For $n \geq 0$, do the following.
  
\item  Perform the additional backward recursion
  \begin{align}
    \V_n^i(z,t) & = \min_{j,u}\Big( \ca_{w_n}(\ell_i, \ell_j) +
    \ka_{w_n}^j(z, u, t) + \V_n^j(F(z, \ell_j, u), t+1)\Big), \\
    & \hspace{0.5in} \ 0 \leq t  < T, \label{DPcombined}
  \end{align}
  with terminal condition $\V_n^i(z,T) = \ha_{w_n}(z)$.

\item Perform a single update of $w_n = [w_n(1), \cdots , w_n(K)]^T$
  as
  \begin{equation}
    w_{n+1}(m) = w_n(j)+ a(n)w_n(m)\Big((V^{i_0}_m(z_0,0) - C_j) -
    (\V^{i_0}_n(z_0,0) - \C(w_n)\Big) \label{repl}
  \end{equation}
  for $1 \leq m \leq K$, where $\C(w) := \sum_mw(m)C_m$ and $a(n) > 0$
  satisfy
  \begin{equation}
    \sum_ma(m) = \infty. \label{stepsize}
  \end{equation}
\end{enumerate}

Denote by $V^i_w$ the unique solution to
\begin{align*}
  \V_w^i(z,t) & = \min_{j,u}\Big( \ca_{w}(\ell_i, \ell_j) +
  \ka_{w}^j(z, u, t) + \V_w^j(F(z, \ell_j, u), t+1)\Big), \\
  & \hspace{0.5in} \ 0 \leq t  < T.
\end{align*}
Iteration (\ref{repl}) is simply the Euler scheme for the differential
equation
\begin{equation}
  \dot{w}_m(t) = w_m(t)\Big((V^{i_0}_m(z_0,0) - C_j) -
  (\V^{i_0}_{w(t)}(z_0,0) - \C(w(t))\Big), \ 1 \leq m \leq K,
  \label{replODE}
\end{equation}
where $V^i(z, t) := \sum_mw_m(t)V^i_m(z, t)$. Let $(q_n(\cdot,
\cdot),v_n(\cdot, \cdot))) : (\Re^+)^2\times \{0, 1, \cdots, T\}
\mapsto L\times[0, \Gamma], n \geq 0,$ be a sequence of stationary
policies such that $(q_n(z,m), v_n(z,m))$ attains the minimum on the
right hand side of (\ref{DPcombined}).

\begin{theorem}
  Every stationary policy obtained as a limit point of $(q_n,v_n)$ as
  $n\uparrow\infty$ satisfies (\ref{constraint}).
\end{theorem}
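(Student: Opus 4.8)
The plan is to read the weight recursion \eqref{repl} as the Euler scheme for the replicator-type ODE \eqref{replODE} and to analyse the ODE through a potential/Lyapunov argument. Write $\Delta:=\{w:w(m)\ge 0,\ \sum_m w(m)=1\}$; for $w\in\Delta$ let $\pi_w$ attain the minimum in \eqref{DPcombined}, let $V_m(z_0,0;\pi_w)$ be the cost incurred by $\pi_w$ under the $m$-th objective (this is the quantity produced, by policy evaluation, alongside \eqref{DPseparate}), and set $g_m(w):=V_m(z_0,0;\pi_w)-C_m$. Because the scalarised cost is additive over the objectives, $\V^{i_0}_w(z_0,0)=\sum_m w(m)V_m(z_0,0;\pi_w)$, so with $\psi(w):=\V^{i_0}_w(z_0,0)-\C(w)=\min_{\pi}\langle w,\,V(z_0,0;\pi)-C\rangle$ one has $\langle w,g(w)\rangle=\psi(w)$ and \eqref{replODE} is the genuine replicator flow $\dot w_m=w_m\big(g_m(w)-\langle w,g(w)\rangle\big)$, under which $\Delta$ and each of its faces are invariant. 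Two elementary observations drive everything: (i) $\psi$ is concave, piecewise linear and positively homogeneous of degree one (a minimum of linear maps), and $g(w)=V(z_0,0;\pi_w)-C$ is a supergradient of $\psi$ at $w$, so $\langle w,g(w)\rangle=\psi(w)$ is just the Euler identity; (ii) the standing feasibility hypothesis — some $\pi$ with $V_j(z_0,0;\pi)\le C_j$ for all $j$ — gives $\psi(w)\le\langle w,\,V(z_0,0;\pi)-C\rangle\le 0$ for every $w\in\Delta$, hence $\max_\Delta\psi\le 0$.

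Next I would verify that $\psi$ is a strict Lyapunov function for \eqref{replODE}: along a trajectory, $\tfrac{d}{dt}\psi(w(t))=\langle g(w),\dot w\rangle=\sum_m w_m g_m(w)^2-\big(\sum_m w_m g_m(w)\big)^2=\mathrm{Var}_{w}\!\big(g(w)\big)\ge 0$, vanishing exactly at equilibria, where $g_m(w)$ is constant over $\mathrm{supp}(w)$. Since $a(n)\to 0$ and $\sum_n a(n)=\infty$, the piecewise-linear interpolation of $\{w_n\}$ is an asymptotic pseudotrajectory of this flow — and there is no martingale noise here, so the argument is purely deterministic — whence by the ODE method \cite{Borkar08} together with LaSalle's principle, $\{w_n\}$ converges to the set of equilibria of \eqref{replODE}. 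At any equilibrium $w^\ast$ (hence at any limit point) of $\{w_n\}$, observation (ii) already yields, for every $m\in\mathrm{supp}(w^\ast)$, $g_m(w^\ast)=\langle w^\ast,g(w^\ast)\rangle=\psi(w^\ast)\le 0$, i.e.\ $V_m(z_0,0;\pi_{w^\ast})\le C_m$ for all objectives carried by the limiting weight.

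The remaining objectives, those $m$ with $w^\ast(m)=0$, are the step I expect to be the real obstacle, since the equilibrium relation says nothing about them and one must exploit that the replicator flow repels trajectories from the boundary equilibria that violate a constraint. The clean device is to combine $w_n(m)\to 0$ with the logarithmic identity $\log w_{n+1}(m)-\log w_n(m)=\log\big(1+a(n)(g_m(w_n)-\langle w_n,g(w_n)\rangle)\big)$: were $g_m(w^\ast)>\langle w^\ast,g(w^\ast)\rangle$, one would have $g_m(w_n)-\langle w_n,g(w_n)\rangle\ge\delta>0$ for all large $n$, and $\sum_n a(n)=\infty$ would force $\log w_n(m)\to+\infty$, contradicting $w_n(m)\to 0$; hence $g_m(w^\ast)\le\langle w^\ast,g(w^\ast)\rangle=\psi(w^\ast)\le 0$ as well. (Equivalently, using (i): the Lyapunov property plus a relative-entropy estimate forces $w^\ast\in\arg\max_\Delta\psi$, and since $\psi$ is degree-one homogeneous the KKT multiplier of the constraint $\sum_m w(m)=1$ equals $\psi(w^\ast)$, so the supergradient $g(w^\ast)$ satisfies $g_m(w^\ast)\le\psi(w^\ast)\le 0$ componentwise.) Either way $V_m(z_0,0;\pi_{w^\ast})\le C_m$ for all $m$; recalling that the limiting stationary policy $(q^\ast,v^\ast)$ is, by upper semicontinuity of the minimisers in \eqref{DPcombined}, scalarised-optimal at $w^\ast$ — so that $V(z_0,0;(q^\ast,v^\ast))-C=g(w^\ast)$ — this is exactly \eqref{constraint}. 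The two technical points needing care are the legitimacy of the supergradient selection at non-smooth $w^\ast$ and the fact that $\{w_n\}$ need not converge to a single point; both are precisely the issues settled in \cite{Shah18}, whose argument transcribes to the present finite-horizon, discrete-time setting.
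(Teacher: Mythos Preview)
Your approach is essentially the same as the paper's: both read \eqref{repl} as an Euler discretisation of the replicator ODE \eqref{replODE}, invoke the ODE method to transfer asymptotics (noting that noise is absent so square-summability of stepsizes is not needed), and appeal to \cite{Shah18} for the convergence of the replicator flow to the feasible set. The paper's proof is in fact only a sketch that stops at this reduction, whereas you additionally spell out the Lyapunov/variance argument and the boundary analysis that lie behind the cited reference.
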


This is proved exactly as in \cite{Shah18}. In fact, the scenario here
is vastly simpler than that of \textit{ibid.} due to absence of noise,
asynchrony, etc. We only sketch the key steps here, referring the
reader to \cite{Shah18} for further details. First, observe that
(\ref{replODE}) is a special case the replicator dynamics of
evolutionary biology \cite{Sandholm10}.  As in \cite{Shah18}, the
discrete iteration (which can be viewed as the Euler scheme for
(\ref{replODE}) with slowly decreasing stepsizes) tracks the
asymptotic behavior of (\ref{replODE}). The condition (\ref{stepsize})
plays a key role here: since $a(n)$ plays the role of a time step in
the time discretization of (\ref{replODE}), (\ref{stepsize}) ensures
that the entire time axis is covered, so that the asymptotic behavior
of (\ref{replODE}) can be tracked. We do not, however, need the
standard condition in stochastic approximation algorithms that
$\sum_na(n)^2 < \infty$ which features in \cite{Shah18}, because we
have a purely deterministic iteration without any noise. Thus the
problem is reduced to establishing that (\ref{replODE}) indeed
converges to the desired set. This is already proved in \cite{Shah18}.

Next we consider the control of two interacting communities like in
Section~\ref{sec:SIR-NC-communities} with the stipulation that the
dynamics of one evolves on a much faster scale than the other. One
possible scenario is a dense slum interacting with a geographically
sparse affluent section. Following
Section~\ref{sec:SIR-NC-communities}, we model this as the coupled
system
\begin{eqnarray}
  \frac{dS_a(t)}{dt} & = & - S_a(t) \left( \lambda_a(t)
  \frac{I_a(t)}{N_a(t)} + \lambda_{ab}(t) \frac{I_b(t)}{N_b(t)}
  \right) \label{an1} \\ 
 \frac{dI_a(t)}{dt} & = & S_a(t)\left( \lambda_a(t)
 \frac{I_a(t)}{N_a(t)} + \lambda_{ab}(t) \frac{I_b(t)}{N_b(t)} \right)
 - \gamma_a(t) I_a(t) \label{an2} \\ 
  \frac{dS_b(t)}{dt} & = & \varepsilon \Big(- S_b(t) \left(
  \lambda_b(t) \frac{I_b(t)}{N_b(t)} + \lambda_{ab}(t)
  \frac{I_a(t)}{N_a(t)} \right)\Big)
  \label{bn1}   \\
  \frac{dI_b(t)}{dt} & = & \varepsilon\Big( S_b(t) \left( \lambda_b(t)
  \frac{I_b(t)}{N_b(t)} + \lambda_{ab}(t) \frac{I_a(t)}{N_a(t)}
  \right) - \gamma_b(t) I_b(t)\Big) \label{bn2} \\  
  N_a & = & S_a(t) + I_a(t)  \label{an3} \\
  N_b & = & S_b(t) + I_b(t) \label{bn3}
\end{eqnarray}

Here $0 < \varepsilon << 1$ is a small parameter that renders the
evolution of $(S_b(t), I_b(t))$ to be on a slower time
scale. Following the standard philosophy for analyzing two time scale
systems, we view the slow time scale dynamics as quasi-static for
purposes of analyzing the dynamics on the fast time scale and treat
the fast time scale dynamics as quasi-equilibrated for the purposes of
analyzing the slow dynamics. Thus for the former, we consider the
system
\begin{eqnarray}
  \frac{dS_a(t)}{dt} & = & - S_a(t) \left( \lambda_a(t)
  \frac{I_a(t)}{N_a(t)} + \lambda_{ab}(t) Z
  \right), \label{af1} \\
  \frac{dI_a(t)}{dt} & = & S_a(t)\left( \lambda_a(t)
  \frac{I_a(t)}{N_a(t)} + \lambda_{ab}(t)Z \right) - \gamma_a(t)
  I_a(t), \label{af2}
 \end{eqnarray}
where we view $\frac{I_b(t)}{N_b(t)} \approx$ a constant $Z$. This
coincides with our model of SIR-NC with imported infections. If we
follow the standard procedure described above, then under the
condition $\lambda_{ab} < \lambda_a - \gamma_a$, the asymptotic
equilibrium is the origin, i.e., extinction, which means that the fast
variables simply drop out and the slow dynamics can be analyzed as a
single community dynamics. To get something more informative than
this, we propose the following, particularly in the context of the
control problem. Choose $\varepsilon < a << T$ above so that the time
horizon $T > 0$ is divisible by $a$ and $a$ is divisible by
$\varepsilon$. (If not, we can replace $T/a, a/\varepsilon$ by $\lceil
T/a\rceil, \lceil a/\varepsilon\rceil$ resp.\ in what follows.)

Consider the discrete time intervals $K_n := \{t(n), t(n) +
\varepsilon a, \cdots , t(n+1)\}$ for $0 \leq n < N_T :=
\frac{T}{a\varepsilon}$.  On the interval $K_n$, beginning with $n =
N_T - 1$ and going backward in time, do the following:
\begin{enumerate}
\item Consider the discretized dynamics for state variables $(S_a(n),
  I_a(n))$ corresponding to \eqref{af1}--\eqref{af2} for each possible
  value of $Z := I_b(t(n+1))/N_b(t(n+1))$.

\item Solve the dynamic programming equation on the discrete time
  interval $J_n := \{t(n), t(n) + a/\varepsilon, t(n) +
  2a/\varepsilon, \cdots , t(n+1)\}$ for the discretized dynamics of
  (\ref{af1})-(\ref{af2}) with terminal cost $:=$ the time $t(n+1)$
  value function inherited from the preceding computation on $[t(n+1),
    t(n+2)]$ if $t(n) < N_T$, and $:= h$ otherwise, so as to obtain
  the value function for $m \in J_n$ and initial (i.e., at time
  $T(n)$) condition $(s, i) \in (\Re^+)^2$.

Compute the optimal trajectory of $(S_a(m), I_a(m)), m \in J_n$.

\item Solve the dynamic programming equation for the discretized
  dynamics for $(S_b(m), I_b(m)), 0 \leq m \leq N_T,$ with $(S_a(m),
  I_a(m)) =$ the value thereof at $t(m)$ for the optimal trajectory in
  the preceding step on $[t(m), t(m+1)]$ ($=$ the initial condition if
  $m = 0$).

\end{enumerate}

This emulates the two time scale phenomenon above on two superimposed
discrete skeletons of the time axis, one being a refinement of the
other.

\section{Discussion and Conclusions}

We have proposed a variant of the classical SIR model wherein we drop
the key assumption of conservation of population size. This
Non-Conservative SIR or SIR-NC for short, has the advantage of being
analytically tractable, leading to explicit expressions in the simple
cases, and in addition, being more realistic for lethal pandemics. We
also introduced several variations of the basic model, such as
inclusion of natural birth and death rates, external inputs,
interacting communities, etc. While some preliminary results have been
presented for these, they open up many new directions for future work.

Finally, we consider controlled versions of the above. After
introducing a variety of control objectives and arguing that Model
Predictive Control is a natural framework for pandemic control, we
propose a finite horizon control problem. First we do so for the
original continuous time case, leading to a system of
quasi-variational inequalities due to co-occurrence of continuous
controls with discrete controls that also have switching costs. We
point out the technical difficulties therein and resort to a discrete
version, leading to drastic simplification. We also consider
multi-objective control in this framework. Finally, we briefly touch
upon interacting communities with significantly separated time scales
and propose an approximation scheme for analyzing the associated
control problems. Once again, this opens up rich possibilities for
future work.


\bibliographystyle{siamplain}
\bibliography{../epidemics}

\end{document}